\renewcommand\eqref[1]{(\ref{#1})} 
\allowdisplaybreaks \numberwithin{equation}{section}
\theoremstyle{plain}
\newtheorem{theorem}{Theorem}[section]
\newtheorem{corollary}[theorem]{Corollary}
\newtheorem{lemma}[theorem]{Lemma}
\theoremstyle{definition}
\newtheorem{definition}[theorem]{Definition}
\newtheorem{remark}[theorem]{Remark}
\pgfplotsset{compat=1.18}
\begin{document}

\title[Almost sectorial operators]{Abstract Volterra integral equations of wave type with almost sectorial operators}

\author[J.E. Restrepo]{Joel E. Restrepo}
\address{\textcolor[rgb]{0.00,0.00,0.84}{Joel E. Restrepo \newline Department of Mathematics \newline Center for Research and Advanced Studies, Av. IPN 2508, 07360, CDMX, Mexico}}
\email{\textcolor[rgb]{0.00,0.00,0.84}{joel.restrepo@cinvestav.mx}}
	\endgraf

\date{\today}

\begin{abstract} 
We study classical solutions (existence, uniqueness, and explicit solution operator) for homogeneous, linear, and semilinear abstract Volterra integral equations of wave type with almost sectorial operators. We use a functional calculus for the latter type of operators to construct a general class of bounded linear operators that in particular contains the propagators (solution operators) of the considered equations. Some properties of this family of operators are also given.   
\end{abstract}

\keywords{ Almost sectorial operators, abstract Volterra equations of wave type, linear and semilinear partial integro-differential equations, nonlocal operators, bounded and unbounded operators, non-dense domains or range.} 

\subjclass[2010]{65J08; 34K08; 45D05.}

\maketitle

\tableofcontents

\section{Introduction}

From the 1960s the study of the following abstract Volterra integral equation has been popular: 
\begin{equation}\label{intro-main-e}
 w(t)+\int_0^t k(t-s)Aw(s){\rm d}s=f(t),\quad 0<t<T\leqslant +\infty,   
\end{equation}
where $A$ can be any linear or nonlinear operator (mainly unbounded) in $X$ (a complex Banach space), $k$ is a scalar kernel $\neq0$ and $f$ is a function in a suitable space (e.g., in $L^1_{loc}(\mathbb{R}^+,X)$). Here, we recommend checking \cite{clement,fri,book-new,pruss} and the references therein. There are different ways to study the equation \eqref{intro-main-e}. One of them is to consider the differential counter part as we explain in the next paragraphs. First, note that in applications, the kernel $g_\alpha(t)=\frac{t^{\alpha-1}}{\Gamma(\alpha)}$ for $0<\alpha<2$ plays an important role; see, e.g. \cite{Mainardi,pruss} for more details. Suppose $f\equiv0,$ thus the equation \eqref{intro-main-e} with kernel $g_\alpha$ can be rewritten as
\begin{equation}\label{intro-p-kernel}
w(t)+\int_0^{t}\frac{(t-s)^{\alpha-1}}{\Gamma(\alpha)}Aw(s){\rm d}s=w(t)+\prescript{RL}{0}I_t^{\alpha} Aw(t)=0,
\end{equation}
where $\prescript{RL}{0}I^{\rho}u(t)=\frac1{\Gamma(\rho)}\int_0^t (t-s)^{\rho-1}u(s)\,\mathrm{d}s$ is the Riemann-Liouville fractional integral of order $\rho>0.$ For $0<\alpha<1,$ and at least assuming that $w\in L^1((0,T);X),$ we can apply the well-known Djrbashian-Caputo fractional derivative  
\[
\prescript{C}{}\partial_{t}^{\alpha}w(t)=\prescript{RL}{0}I^{1-\alpha}\partial_t w(t)=\int_0^t \frac{(t-s)^{-\alpha}}{\Gamma(1-\alpha)}\partial_{s}w(s){\rm d}s,\]
to the above equation and by \cite[Formula (1.21)]{thesis2001} get that
\begin{equation}\label{intro-f-heat-e}
\prescript{C}{}\partial_{t}^{\alpha}w(t)+Aw(t)=0,    
\end{equation}
which is the nonlocal in time abstract heat type equation. Note that for $\alpha=1,$ we obtain the classical partial derivative in time, i.e. $^{C}\partial_{t}^{\alpha}w(t)=\prescript{RL}{0}I^{0}\partial_t w(t)=\partial_t w(t)$ since $\prescript{RL}{0}I^{0}$ acts as an identity operator. For the frame $0<\alpha<1,$ linear and semilinear nonlocal in time heat type equations have been considered by using different types of operators $A,$ for instance, sectorial or almost sectorial; see, e.g. \cite{tmn,uno2,{section3}}. In addition, the classical case of an abstract heat equation is well known \cite{Base,JEE2002}. For the latter types of operators, the solution operator of \eqref{intro-f-heat-e} can be written as follows:
\begin{equation}\label{heatfa}
E_\alpha(-t^{\alpha}A)=\int_0^{+\infty}M_{\alpha}(s)e^{-st^{\alpha}A}{\rm d}s,\quad t\geqslant0,    
\end{equation}
where $\{e^{-tA}\}_{t\geqslant0}$ is the $C_0-$ semigroup generated by $-A,$ and
\begin{equation}\label{wright}
M_{\alpha}(z)=\sum_{n=0}^{+\infty}\frac{(-z)^n}{n!\Gamma(-\alpha n+1-\alpha)},\quad z\in\mathbb{C},\quad 0\leqslant\alpha<1,
\end{equation}
is a Wright-type function that is convergent in the whole complex plane \cite{1940}. The expression \eqref{heatfa} can also be seen; for example, in \cite[Theorem 3.1]{thesis2001} or \cite[Theorem 2.42]{thesis}. Some of the basic properties of the function \eqref{wright} are the following:
\[
M_\alpha(t)\geqslant 0\quad\text{for any}\quad t\in(0,+\infty),\quad \int_0^{+\infty}M_\alpha(s){\rm d}s=1.
\]
The expression \eqref{heatfa} is clearly the most used in many works because the estimates depend on the $C_0-$ semigroup (see the references above and also \cite{navier,chen}) and those are usually well known and sharp. In fact, these propagators can be treated easily, since we arrive at the end to analyze a real-value integral. 

In contrast to the case of the nonlocal heat-type equation, the wave-type  solution operator is more delicate, i.e. $1<\alpha<2$. Indeed, a well-behaved propagator as the one presented in \eqref{heatfa} cannot be expected. The wave-type propagator will have involved an oscillatory principal term given by the two parametric Mittag-Leffler function (see Theorems \ref{bounded} and \ref{h-cs-we}), which is totally different from the behavior of \eqref{heatfa} where the function $E_{\alpha}(-t^{\alpha}x)$ $(t,x\geqslant 0)$ is completely monotonic \cite{Pollard}. Hence, different analysis and estimations compared with the heat case need to be developed in this scenario. Also, for $1<\alpha<2,$ the Djrbashian-Caputo operator becomes
\[
\prescript{C}{}\partial_{t}^{\alpha}w(t)=\prescript{RL}{0}I^{2-\alpha}\partial_t^{(2)} w(t)=\int_0^t \frac{(t-s)^{1-\alpha}}{\Gamma(2-\alpha)}\partial_{s}^{(2)}w(s){\rm d}s.\]
By similar arguments as those given for the heat case, equation \eqref{intro-p-kernel} turns out to be
\begin{equation}\label{intro-f-wave-e}
\prescript{C}{}\partial_{t}^{\alpha}w(t)+Aw(t)=0,    
\end{equation}
which is a homogeneous nonlocal in time abstract wave-type equation. The case $\alpha=2,$ is not considered since it goes back to the classical abstract wave equation, and it requests other techniques and methods for its study. 

\medskip In this paper, we focus mainly on studying linear and semilinear wave type equations with almost sectorial operators and nonlocal operators in time. Note that from 2012, there have been no advances with respect to the linear and semilinear problem \eqref{intro-f-wave-e}, i.e. the Volterra equation of wave type. Here, we solve this question in detail. The most recent study in this direction was given in \cite{2025}. Previous works in this direction can be found in \cite{Base,2025,JEE2002,section3}. The classical heat equation was considered in \cite{Base,JEE2002}, while the nonlocal in time abstract heat type equation was studied in \cite{section3}. Moreover, in \cite{2025}, the linear case of nonlocal in time abstract wave type equations was analyzed on some H\"older spaces. In the latter paper, wave-type propagators in different functional spaces were also studied, in particular, in some H\"older ones. Our study is based on the functional calculus developed for almost sectorial operators \cite{JEE2002}, which is an alternative and different way to use it in the analysis and construction of solution operators of several types of equations. In addition, we study the classical solutions of the equations considered. In the following, more details will be given.    

\medskip Different functional calculi have been constructed for several types of operators. Usually, the operator's spectrum lies in a region of the complex plane whose resolvent satisfies certain bounds \cite{{f-calculi-baty},haase,pruss}. These abstract calculi are very useful in the study of linear and semilinear partial integro-differential equations \cite{more-intro,lunardi,JEE2002}. One of the most extensively studied are the so-called sectorial operators. These types of operators are usually closed, linear, and densely defined, and the resolvent satisfies the estimate $\|(z-A)^{-1}\|\leqslant C|z|^{-1},$ for any $z$ in a suitable domain that does not contain the spectrum of the operator. Some important elliptic differential operators are in the class of sectorial operators. The generic spaces for these operators are, for example, Lebesgue spaces and continuous functions. However, operators defined in more regular spaces, for example in H\"older continuous functions, are not sectorial \cite{wahl}. The latter operators belong to the class of almost sectorial operators (see, e.g. \cite{haase,JEE2002,r-18}), which are closed linear operators $A:D(A)\subset X\to X$ defined in a complex Banach space $(X,\|\cdot\|)$ whose domain $D(A)$ is a linear subspace of $X,$ such that the spectrum $\sigma(A)$ is contained in the sector $S_\omega:=\{z\in\mathbb{C}\setminus\{0\}: |\text{arg}\,z|\leqslant\omega\}\cup \{0\}$ for some $0\leqslant \omega<\pi,$ i.e.

\begin{center}
\usetikzlibrary {angles,quotes}
\begin{tikzpicture}[scale=0.7]
    \begin{scope}[thick,font=\scriptsize]
    \draw [->] (-5,0) -- (5,0) node [below left] {$\Re\{z\}$};
    \draw [->] (0,-5) -- (0,5) node [below right] {$\Im\{z\}$};
    \iffalse

\draw [line width=0.5pt] (-2.5,5) -- (0,0) node ;
    
    \draw (1,-3pt) -- (1,3pt)   node [above] {$1$};
    \draw (-1,-3pt) -- (-1,3pt) node [above] {$-1$};
    \draw (-3pt,1) -- (3pt,1)   node [right] {$i$};
    \draw (-3pt,-1) -- (3pt,-1) node [right] {$-i$};
    \else
    \foreach \n in {-4,...,-1,1,2,...,4}{%
        \draw (\n,-3pt) -- (\n,3pt)   node [above] {$\n$};
        \draw (-3pt,\n) -- (3pt,\n)   node [right] {$\n i$};
    }
    \end{scope}

\fill[line width=3.pt,color=cyan,fill=cyan,fill opacity=0.3] (-1.5,5) -- (5.,5.) -- (5,-5.) -- (-1.5,-5) -- (0.,0.) -- cycle;
    
\fill[line width=3.pt,color=red,fill=red,fill opacity=0.3] (2.5,4) -- (4,5) -- (5,5) -- (5,-5) -- (2.5,-4) -- (1.5,0) -- cycle;

\draw [line width=1pt, color=blue] (0,0)-- (-1.5,5);
\draw [line width=1pt, color=blue] (0,0)-- (-1.5,-5);

\coordinate (O) at (0,0);
\coordinate (A) at (5,0);
\coordinate (B) at (-1.5,-5);
\coordinate (D) at (-1.5,5);
\coordinate (E) at (2,0);
\coordinate (F) at (5,5);

\pic [draw,blue, "$S_\omega$", angle eccentricity=1.8, angle radius=1.1cm] {angle = A--O--D};

\pic [draw,dotted, blue,-, angle eccentricity=2, angle radius=1.1cm] {angle = B--O--A};

\pic [-,black,"$\sigma(A)$", angle eccentricity=1.8, angle radius=1.1cm] {angle = A--E--F};

\end{tikzpicture}
\end{center}

and the resolvent satisfies the following estimate 
\begin{equation}\label{intro-res}
    \|(z-A)^{-1}\|\leqslant C_{\mu}|z|^{\gamma},\quad\text{for any}\quad z\notin S_\mu\quad\text{and}\quad \omega<\mu<\pi,\quad -1<\gamma<0,
    \end{equation}
where $C_{\mu}>0$ is a constant. In some cases, the researchers used a weaker condition than \eqref{intro-res}, which is given by:
\[
\|(z-A)^{-1}\|\leqslant C_{\mu}(1+|z|^{\upsilon})^{-1},\quad\text{for any}\quad z\notin S_\mu,\quad\upsilon\in(0,1), \quad\text{and}\quad \omega<\mu<\pi.
\]
More examples of these types of operators can be described by considering some special dumbbell domains, in particular, a dumbbell with a thin handle; see, e.g. \cite{Base}. 

By $\Theta_\omega^\gamma(X)$ we denote the set of all closed linear operators $A:D(A)\subset X\to X$ that are almost sectorial. For simplicity, we denote $\Theta_\omega^\gamma$ instead of $\Theta_\omega^\gamma(X)$. By notation, we write $S_\mu^0$ as the open sector $\{z\in\mathbb{C}\setminus\{0\}: |\text{arg}\,z|<\mu\}.$ Note that $0\in\rho(A)$ for any $A\in\Theta_\omega^\gamma.$ Also, we have that $A$ is injective \cite[Remark 2.2]{JEE2002}. It is important to recall that operators in the class $\Theta_\omega^\gamma$ have the possibility of having non-dense domain and/or range. This feature gives a different view with respect to the classical results, where dense domains are generally considered \cite{pazy,pruss}.

Although some researchers studied almost sectorial operators defined over domains that are dense as well, see, e.g. \cite{dense-1,dense-2}. In this paper, we consider the most general case. It is clear that with estimate \eqref{intro-res}, the operator $A$ cannot generate a $C_0-$ semigroup. In our case, almost sectorial operators generate another type of semigroups called {\it{analytic semigroups of growth order $\gamma$}}: 

\begin{definition}\label{analytic-s}
Let $0<\mu<\pi/2$ and $\kappa>0.$ A family $\{\mathscr{T}(t):t\in S_\mu^0\}$ is said to be an analytic semigroup of growth order $\kappa$ if the following conditions hold:
\begin{enumerate}
    \item $\mathscr{T}(t+s)=\mathscr{T}(t)\mathscr{T}(s)$ for any $t,s\in S_\mu^0.$
    \item \label{ii}The mapping $t\to\mathscr{T}(t)$ is analytic in $S_\mu^0.$
    \item There exists a positive constant $C$ such that 
    \[
    \|\mathscr{T}(t)\|\leqslant Ct^{-\kappa},\quad\text{for any}\quad t>0.
    \]
    \item If $\mathscr{T}(t)x=0$ for some $t\in S_\mu^0$, then $x=0.$
\end{enumerate}
\end{definition}
The above concept was almost the same as that introduced by Da Prato in \cite{r-2} for positive integer orders. There are just two differences with respect to the one given by Da Prato. First, the set $X_0=\bigcup_{t>0}\mathscr{T}(t)X$ not need to be dense in $X.$ Second, the strong continuity of the mapping $t\to \mathscr{T}(t)$ for $t>0$ is replaced by condition \eqref{ii}. The generalization for any positive order was given in different works e.g. \cite{r-13,oka,r-18,r-22} and the references therein. These semigroups do not imply strong continuity at $t=0,$ and this is one of the main difference compare with the $C_0$-semigroups. These type of operators frequently appear by the consideration of elliptic operators in regular spaces. Let us now recall briefly some classical examples in this setting. For example, $-\Delta_{\mathbb{R}^n}$ in a bounded domain $\Omega$ of $\mathbb{R}^n$ is sectorial under some suitable boundary conditions in $L^p(\Omega)$ \cite[Section 1.3]{6-intro}. Moreover, it is also sectorial in the spaces of bounded or continuous functions \cite{more-intro,19-intro}. While, in the space of H\"older continuous functions, $-\Delta_{\mathbb{R}^n}$ is almost sectorial, see e.g. \cite{wahl} or \cite[Example 3.1.33]{lunardi}. Some other good examples of almost sectorial operators can be found in \cite[Section 2]{JEE2002} or \cite{wahl}. 

\medskip Let us now give a brief summary of the main results of this manuscript.

\medskip From now on, we denote by $g_{\beta}(t)=\frac{t^{\beta-1}}{\Gamma(\beta)}$ for $\beta>0$ and $t>0.$ Also, $(v\ast u)(t)$ denotes the Laplace convolution, i.e.
$$(v\ast u)(t)=\int\limits_0^t v(t-s)u(s){\rm d}s.$$

In all the following statements, we assume that $A\in \Theta_\omega^\gamma,$ $-1<\gamma<0$ and $\omega<\theta<\mu<\pi-\alpha\frac{\pi}{2}.$ The restriction $\mu<\pi-\alpha\frac{\pi}{2}$ is explained in Theorem \ref{bounded}. Also, the domain $D(A)$ of an operator $A$ is always endowed under the graph norm $\|x\|_{D(A)}=\|Ax\|+\|x\|$, therefore, it is a Banach space. The positive constant $C$ that appears through the paper can vary from one step to another. 

\medskip First, in Section \ref{havewt}, we provide a general class of bounded linear operators related to the two-parametric Mittag-Leffler function $E_{\alpha,\delta}(z)$ $(\alpha<2,\delta\in\mathbb{R},z\in\mathbb{C})$ (see Subsection \ref{ML-s} for more details of these types of functions). Note that these operators will be involved in the representation of solutions for the linear and semilinear cases of our wave type equations. 

\begin{theorem}\label{bounded-intro}
 For any fixed $t>0$, the following operator 
\begin{equation}\label{repre-o-intro}
E_{\alpha,\delta}(-t^{\alpha}z)(A)=\frac{1}{2\pi i}\int_{\Gamma_\theta}E_{\alpha,\delta}(-t^{\alpha}z)(z-A)^{-1}{\rm d}z,\quad \delta\in\mathbb{R},\quad \alpha<2,
\end{equation}
is linear and bounded on $X,$ where $\|E_{\alpha,\delta}(-t^{\alpha}z)(A)\|\leqslant C(\alpha,\delta,\gamma)t^{-\alpha(1+\gamma)}$ for some positive constant $C(\alpha,\delta,\gamma).$ Also, the operator $E_{\alpha,\delta}(-t^{\alpha}z)(A)$ is strongly continuous in $[t_0,+\infty)$ for every $t_0>0$. In addition, for $x\in X,$ the mapping $t\to t^{\delta-1}E_{\alpha,\delta}(-t^{\alpha}z)(A)x$ is $n\,(\text{for any}\,\,n\in\mathbb{N})$ times continuously differentiable such that \[
\partial_t^{n}\big(t^{\delta-1}E_{\alpha,\delta}(-t^{\alpha}z)(A)\big)=t^{\delta-n-1}E_{\alpha,\delta-n}(-t^{\alpha}z)(A),
\]
and the latter operator defines a bounded linear operator in $X,$ where \[
\|\partial_t^{n}\big(t^{\delta-1}E_{\alpha,\delta}(-t^{\alpha}z)(A)\big)\|\leqslant C(\alpha,\delta-n,\gamma)t^{\delta-n-1-\alpha(1+\gamma)}.
\]
\end{theorem}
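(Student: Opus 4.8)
The plan is to prove boundedness, the differentiation formula, and the corresponding bounds by working directly with the contour integral representation in \eqref{repre-o-intro}, combined with the known growth of the resolvent \eqref{intro-res} and the asymptotic behavior of the two-parametric Mittag-Leffler function on the contour $\Gamma_\theta$. First I would parametrize the contour $\Gamma_\theta$ as the boundary of the sector $S_\theta$ (two rays $z=re^{\pm i\theta}$, $r\geqslant 0$, oriented appropriately), so that $z\notin S_\mu$ along $\Gamma_\theta$ since $\theta<\mu$, and hence the resolvent estimate $\|(z-A)^{-1}\|\leqslant C_\mu|z|^\gamma$ applies throughout. The key analytic input is the large-argument asymptotics of $E_{\alpha,\delta}$: for $\alpha<2$ and $z$ on the rays of $\Gamma_\theta$, the argument $-t^\alpha z$ lies in a region where $E_{\alpha,\delta}(-t^\alpha z)$ decays like $|t^\alpha z|^{-1}$ (up to the dominant oscillatory term, whose modulus is still controlled), because the restriction $\mu<\pi-\alpha\tfrac{\pi}{2}$ guarantees that $\arg(-t^\alpha z)$ stays outside the sector of exponential growth of the Mittag-Leffler function. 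This is precisely where the hypothesis $\mu<\pi-\alpha\frac{\pi}{2}$ is used, and I expect this to be the main obstacle: one must verify uniformly in $r$ that the combined integrand $|E_{\alpha,\delta}(-t^\alpha r e^{\pm i\theta})|\,|z|^\gamma$ is integrable and yields exactly the claimed power $t^{-\alpha(1+\gamma)}$ after the substitution $s=t^\alpha r$.

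Granting this, boundedness follows by estimating
\[
\|E_{\alpha,\delta}(-t^\alpha z)(A)\|\leqslant \frac{1}{2\pi}\int_{\Gamma_\theta}|E_{\alpha,\delta}(-t^\alpha z)|\,\|(z-A)^{-1}\|\,|{\rm d}z|\leqslant \frac{C}{2\pi}\int_0^{+\infty}|E_{\alpha,\delta}(-t^\alpha r e^{i\theta})|\,r^\gamma\,{\rm d}r,
\]
and then applying the asymptotic decay of $E_{\alpha,\delta}$ together with its continuity (hence boundedness) near $r=0$. The substitution $s=t^\alpha r$ produces the factor $t^{-\alpha(\gamma+1)}$ outside an $r$-independent convergent integral, giving the constant $C(\alpha,\delta,\gamma)$. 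The fact that the integral converges at both endpoints—at $r=0$ because $\gamma>-1$ makes $r^\gamma$ integrable and $E_{\alpha,\delta}$ is bounded there, and at $r=\infty$ because of the $|t^\alpha z|^{-1}$ decay of $E_{\alpha,\delta}$—is what makes the operator well-defined and bounded. Strong continuity on $[t_0,+\infty)$ follows from dominated convergence: on this set the integrand is dominated uniformly by an integrable majorant (since $t\geqslant t_0>0$ controls the decay), and the pointwise continuity of $t\mapsto E_{\alpha,\delta}(-t^\alpha z)$ for each fixed $z$ lets me pass the limit inside the integral.

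For the differentiation formula, I would use the classical identity for the two-parametric Mittag-Leffler function,
\[
\frac{{\rm d}}{{\rm d}t}\bigl(t^{\delta-1}E_{\alpha,\delta}(-t^\alpha z)\bigr)=t^{\delta-2}E_{\alpha,\delta-1}(-t^\alpha z),
\]
which is exactly the scalar version of the asserted operator identity with $n=1$. The strategy is to differentiate under the integral sign in \eqref{repre-o-intro}; this is justified because, after differentiation, the new integrand still decays fast enough (the shift $\delta\to\delta-1$ does not affect the $|z|^{-1}$-type decay coming from the Mittag-Leffler asymptotics) so that the differentiated integral converges uniformly for $t$ in compact subsets of $(0,+\infty)$. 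Iterating the scalar identity $n$ times gives $\partial_t^n\bigl(t^{\delta-1}E_{\alpha,\delta}(-t^\alpha z)\bigr)=t^{\delta-n-1}E_{\alpha,\delta-n}(-t^\alpha z)$, and carrying this through the contour integral yields the operator identity. The bound on the $n$-th derivative is then immediate: it is precisely the boundedness estimate already established but with $\delta$ replaced by $\delta-n$, which gives the factor $t^{-\alpha(1+\gamma)}$ and, combined with the explicit power $t^{\delta-n-1}$ from the prefactor, the stated growth $t^{\delta-n-1-\alpha(1+\gamma)}$. The only technical care needed is to confirm that the justification of differentiation under the integral (uniform convergence of the differentiated integrand) holds for all $n$, which follows by induction once the $n=1$ case and the $\delta$-independence of the decay mechanism are in place.
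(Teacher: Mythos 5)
Your proposal is correct and follows essentially the same route as the paper: the sector condition $\mu<\pi-\alpha\frac{\pi}{2}$ forces $|\arg(-t^{\alpha}z)|>\alpha\pi/2$ on $\Gamma_\theta$ so that the uniform Mittag-Leffler bound $|E_{\alpha,\delta}(-t^{\alpha}z)|\leqslant C(1+t^{\alpha}|z|)^{-1}$ applies, the substitution $s=t^{\alpha}r$ in $\int_0^{\infty}r^{\gamma}(1+t^{\alpha}r)^{-1}\,{\rm d}r$ yields the power $t^{-\alpha(1+\gamma)}$, strong continuity comes from dominated convergence, and the derivative formula comes from the scalar identity $\partial_t^{n}\bigl(t^{\delta-1}E_{\alpha,\delta}(-t^{\alpha}z)\bigr)=t^{\delta-n-1}E_{\alpha,\delta-n}(-t^{\alpha}z)$ pushed through the contour integral by the Leibniz rule. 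The only cosmetic difference is that the paper first verifies $E_{\alpha,\delta}(-t^{\alpha}\cdot)\in\mathcal{F}_0^{\gamma}(S_\mu^0)$ and invokes the functional-calculus theorem for well-definedness, whereas you argue directly from the integral; the estimates involved are identical.
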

There are many other properties about these operators that are established in Section \ref{havewt}. We continue with the study of the classical solutions (see Definition \ref{cshc}) of the following semilinear abstract Volterra equations of wave type:
\begin{align}\label{we-intro}
\prescript{C}{}\partial_{t}^{\alpha}w(t)-Aw(t)&=f(t,u(t)),\quad 0<t\leqslant T,\quad 1<\alpha<2,\nonumber \\
w(t)|_{_{_{t=0}}}&=w_0, \\
\partial_t w(t)|_{_{_{t=0}}}&=w_1,\nonumber
\end{align}
where $X$ is a complex Banach space, $A\in \Theta_\omega^\gamma$ with $-1<\gamma<0$ and $\omega<\theta<\mu<\pi-\alpha\frac{\pi}{2}$ and $w_0,w_1\in D(A).$ We see some restrictions on the data and also on the sector that will be argued for the homogeneous, linear and semilinear cases of equation \eqref{we-intro} through different sections. For instance, for the homogeneous case, these questions and other remarks are treated fully in Section \ref{havewt}. In the following, we start by presenting the classical solution of the homogeneous problem \eqref{we-intro} $(f\equiv0).$

\begin{theorem}\label{h-cs-we-intro}
If $\frac{1}{1+\gamma}>\alpha>\frac{1}{-\gamma},$ $(-1<\gamma<-1/2)$ and $w_0,w_1\in D(A),$ then the classical solution of the homogeneous equation \eqref{we-intro} $(f\equiv0)$ is given by $w(t)=E_{\alpha}(-t^{\alpha}z)(A)w_0+tE_{\alpha,2}(-t^{\alpha}z)(A)w_1.$
\end{theorem}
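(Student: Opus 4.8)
The plan is to verify directly that $w=u_1+u_2$, with $u_1(t):=E_{\alpha}(-t^{\alpha}z)(A)w_0$ and $u_2(t):=tE_{\alpha,2}(-t^{\alpha}z)(A)w_1$, is a classical solution (Definition \ref{cshc}): I must check the two initial conditions and the equation, and record the regularity. The differentiation rules and strong continuity are inherited from Theorem \ref{bounded-intro}; moreover, since $A$ commutes with its resolvent it commutes with every operator produced by \eqref{repre-o-intro}, so for $w_0,w_1\in D(A)$ one has $Aw(t)=E_{\alpha}(-t^{\alpha}z)(A)Aw_0+tE_{\alpha,2}(-t^{\alpha}z)(A)Aw_1\in X$, whence $w(t)\in D(A)$ for $t>0$ and $w\in C((0,T];D(A))$. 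The whole difficulty is concentrated at $t=0$, and the device that unlocks it is the hypothesis $w_0,w_1\in D(A)$ together with $0\in\rho(A)$.

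The key reduction is the resolvent identity $(z-A)^{-1}x=z^{-1}x+z^{-1}(z-A)^{-1}Ax$ for $x\in D(A)$. Inserting it in \eqref{repre-o-intro} splits each $u_i$ into a scalar multiple of the datum plus a remainder. Because $\Gamma_\theta$ encircles $\sigma(A)$ but leaves the origin outside ($0\in\rho(A)$), the two scalar integrals $\frac{1}{2\pi i}\int_{\Gamma_\theta}z^{-1}E_{\alpha}(-t^{\alpha}z)\,{\rm d}z$ and $\frac{1}{2\pi i}\int_{\Gamma_\theta}z^{-1}tE_{\alpha,2}(-t^{\alpha}z)\,{\rm d}z$ vanish (the integrand is $O(z^{-2})$ and holomorphic inside the closed contour), leaving
\[
u_1(t)=Q(t)Aw_0,\qquad u_2(t)=\widetilde Q(t)Aw_1,\qquad Q(t):=\frac{1}{2\pi i}\int_{\Gamma_\theta}\frac{E_{\alpha}(-t^{\alpha}z)}{z}(z-A)^{-1}\,{\rm d}z,
\]
with $\widetilde Q$ defined analogously from $tE_{\alpha,2}$. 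The extra factor $z^{-1}$ raises the decay to $|z|^{\gamma-2}$, so $Q,\widetilde Q$ converge absolutely and dominated convergence gives $Q(t)\to\frac{1}{2\pi i}\int_{\Gamma_\theta}z^{-1}(z-A)^{-1}\,{\rm d}z=A^{-1}$ and $\widetilde Q(t)\to0$ as $t\to0^{+}$. Hence $u_1(0)=A^{-1}Aw_0=w_0$ and $u_2(0)=0$, i.e. $w(0)=w_0$. Differentiating and using $\partial_t\big(tE_{\alpha,2}(-t^{\alpha}z)\big)=E_{\alpha}(-t^{\alpha}z)$ yields $\partial_t u_2(t)=Q(t)Aw_1\to w_1$, while $\partial_t E_{\alpha}(-t^{\alpha}z)=t^{-1}E_{\alpha,0}(-t^{\alpha}z)$ gives $\partial_t u_1(t)=t^{-1}R_0(t)Aw_0$ with $R_0(t):=\frac{1}{2\pi i}\int_{\Gamma_\theta}z^{-1}E_{\alpha,0}(-t^{\alpha}z)(z-A)^{-1}\,{\rm d}z$. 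A scaling estimate $z\mapsto t^{\alpha}z$ against $\|(z-A)^{-1}\|\leqslant C|z|^{\gamma}$ produces $\|R_0(t)\|\leqslant Ct^{-\alpha\gamma}$, so $\|\partial_t u_1(t)\|\leqslant Ct^{-1-\alpha\gamma}$, which tends to $0$ exactly when $-1-\alpha\gamma>0$, that is $\alpha>\frac{1}{-\gamma}$. This is precisely the lower restriction in the hypothesis and it delivers $\partial_t w(0)=w_1$; the upper bound $\alpha<\frac{1}{1+\gamma}$ plays the symmetric role of forcing the $w_1$-term to vanish at the origin through the crude bound $\|u_2(t)\|\leqslant Ct^{1-\alpha(1+\gamma)}\|w_1\|$.

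For the equation I differentiate termwise, now legitimate thanks to the decay gained above. Each scalar symbol solves a fractional ODE, $\prescript{C}{}\partial_t^{\alpha}E_{\alpha}(-t^{\alpha}z)=-zE_{\alpha}(-t^{\alpha}z)$ and $\prescript{C}{}\partial_t^{\alpha}\big(tE_{\alpha,2}(-t^{\alpha}z)\big)=-z\,tE_{\alpha,2}(-t^{\alpha}z)$, with the matching Caputo initial values $(1,0)$ and $(0,1)$. Applying $\prescript{C}{}\partial_t^{\alpha}$ under the integral in $u_1=Q(t)Aw_0$ turns $z^{-1}E_{\alpha}(-t^{\alpha}z)$ into $-E_{\alpha}(-t^{\alpha}z)$, so $\prescript{C}{}\partial_t^{\alpha}Q(t)=-E_{\alpha}(-t^{\alpha}z)(A)$ and $\prescript{C}{}\partial_t^{\alpha}u_1=-E_{\alpha}(-t^{\alpha}z)(A)Aw_0=-Au_1$; the same computation gives $\prescript{C}{}\partial_t^{\alpha}u_2=-Au_2$. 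Summing, $\prescript{C}{}\partial_t^{\alpha}w=-Aw$, which is the homogeneous equation \eqref{intro-f-wave-e}. Uniqueness follows by applying $\prescript{RL}{0}I^{\alpha}$ to reduce a solution with zero data to the Volterra equation $v=-g_{\alpha}\ast Av$ and running a Gronwall/contraction argument in $C([0,T];D(A))$, forcing $v\equiv0$.

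I expect the main obstacle to be convergence rather than algebra. The contour integral \eqref{repre-o-intro} does converge absolutely (its integrand is $O(|z|^{\gamma-1})$, using that $E_{\alpha,\delta}(-t^{\alpha}z)$ decays like $z^{-1}$, which is exactly what the restriction $\mu<\pi-\alpha\frac{\pi}{2}$ buys along $\Gamma_\theta$ against the slow resolvent growth $|z|^{\gamma}$, $-1<\gamma<0$); however, a single formal $\prescript{C}{}\partial_t^{\alpha}$ multiplies the symbol by $-z$ and pushes the integrand to $|z|^{\gamma}$, which is not integrable. Making the termwise fractional differentiation and the interchange of the Caputo convolution in $t$ with the $z$-contour rigorous is the technical heart of the argument, and the $D(A)$-reduction of the second paragraph is what restores absolute convergence throughout. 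The second sensitive point is purely quantitative: the boundary rate $\|\partial_t u_1(t)\|\leqslant Ct^{-1-\alpha\gamma}$ must be tracked sharply, since it is the inequality $\alpha>\frac{1}{-\gamma}$ alone that rescues the second initial condition.
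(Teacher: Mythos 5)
Your overall architecture is sound and close in spirit to the paper's: verify the equation via the symbol identities $\prescript{C}{}\partial_t^{\alpha}E_{\alpha}(-t^{\alpha}z)=-zE_{\alpha}(-t^{\alpha}z)$, reduce the initial conditions to strong continuity of $E_{\alpha}(-t^{\alpha}A)$ on $D(A)$ and to the decay of $\partial_tE_{\alpha}(-t^{\alpha}A)x$ for $x\in D(A)$, and your exponents ($t^{1-\alpha(1+\gamma)}$ for the $w_1$-term, $t^{-1-\alpha\gamma}$ for $\partial_tu_1$, hence the two constraints $\alpha<\tfrac{1}{1+\gamma}$ and $\alpha>\tfrac{1}{-\gamma}$) all match the paper's. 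But the step on which everything at $t=0$ rests is not valid as written. After inserting $(z-A)^{-1}x=z^{-1}x+z^{-1}(z-A)^{-1}Ax$ you work on the \emph{same} boundary contour $\Gamma_\theta$, whose vertex is the origin. The scalar integrand $z^{-1}E_{\alpha}(-t^{\alpha}z)$ has a simple pole \emph{at a point of the contour}, so it is neither ``holomorphic inside the closed contour'' nor integrable along $\Gamma_\theta$; and the operator integrand of $Q(t)$ behaves like $|z|^{-1}$ (or $|z|^{\gamma-1}$ if you use the resolvent bound) near the vertex, which is not absolutely integrable since $\gamma<0$. Your convergence claim addresses only $|z|\to\infty$. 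Consequently neither the vanishing of the scalar pieces nor the limits $Q(t)\to A^{-1}$, $\widetilde Q(t)\to0$, $\|R_0(t)\|\leqslant Ct^{-\alpha\gamma}$ is established, and with them $w(0)=w_0$ and $\partial_tw(0)=w_1$ remain unproved. The idea is repairable --- since $0\in\rho(A)$ one may first deform $\Gamma_\theta$ around the origin, or better, pass to the Hankel-path representation $E_{\alpha}(-t^{\alpha}A)=\frac{1}{2\pi i}\int_{\Gamma_{\theta_0}}e^{\lambda t}\lambda^{\alpha-1}(\lambda^{\alpha}+A)^{-1}{\rm d}\lambda$ with inner radius $\rho=1/t$, which is exactly how the paper (Theorem \ref{strong} and Theorem \ref{derivative-s}) makes your $D(A)$-reduction rigorous: there $E_{\alpha}(-t^{\alpha}A)x-x=-\frac{1}{2\pi i}\int_{\Gamma_{\theta_0}}e^{\lambda t}\lambda^{-1}(\lambda^{\alpha}+A)^{-1}Ax\,{\rm d}\lambda$ converges absolutely and is $O(t^{-\alpha\gamma})\|Ax\|$.

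Two further points. First, your uniqueness argument does not close: from $v=-g_{\alpha}\ast Av$ a Gr\"onwall estimate bounds $\|v(t)\|$ by $\int_0^tg_{\alpha}(t-s)\|Av(s)\|{\rm d}s$, and for unbounded $A$ this cannot be absorbed into $\|v\|$; moreover a classical solution is only assumed continuous into $X$ with $v(t)\in D(A)$ for $t>0$, so you may not run the contraction in $C([0,T];D(A))$. The paper instead argues via the Laplace transform of the propagator (Lemma \ref{laplace}) and uniqueness of the inverse transform. Second, your computation (like the paper's own) produces $\prescript{C}{}\partial_t^{\alpha}w=-Aw$, i.e.\ equation \eqref{intro-f-wave-e}, whereas the theorem is stated for \eqref{we-intro}, which carries the opposite sign on $A$; this sign discrepancy is inherited from the paper and is not a defect specific to your argument, but it should be flagged rather than silently identified with the target equation.
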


Now we provide the main results for the classical solutions of the linear case of \eqref{we-intro}.
\begin{theorem}\label{thm-linear-we-intro}
Suppose that $f(t)\in D(A)$ for any $t\in(0,T]$, $f\in L^1\big((0,T);D(A)\big)$ and let $f$ be H\"older continuous with an exponent $\nu\in(0,1]$ such that $\nu>\alpha(1+\gamma)$, i.e., $\|f(t)-f(s)\|\leqslant \kappa |t-s|^{\nu},$ for $0<t,s\leqslant T.$ Then $w(t)=(g_{\alpha-1}(s)\ast E_{\alpha}(-s^{\alpha}A)\ast f(s))(t)$ is the unique classical solution of \eqref{we-intro} with $w_0=w_1=0.$
\end{theorem}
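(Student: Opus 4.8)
The plan is to verify directly that the convolution formula yields a classical solution in the sense of Definition~\ref{cshc}, and then to establish uniqueness by linearity. First I would collapse the triple convolution into a single one. Using the functional calculus representation \eqref{repre-o-intro}, the Mittag-Leffler recurrence $E_{\alpha,1}(\zeta)=1+\zeta E_{\alpha,\alpha+1}(\zeta)$, and the strong continuity and differentiation properties collected in Theorem~\ref{bounded-intro}, one checks (for instance by comparing the operator-valued Laplace transforms of the two strongly continuous functions on $(0,+\infty)$ and invoking uniqueness of the Laplace transform) that $\big(g_{\alpha-1}\ast E_{\alpha}(-s^{\alpha}A)\big)(t)=t^{\alpha-1}E_{\alpha,\alpha}(-t^{\alpha}A)=:K(t)$, so that $w=K\ast f$, i.e.\ $w(t)=\int_0^t (t-s)^{\alpha-1}E_{\alpha,\alpha}(-(t-s)^{\alpha}A)f(s)\,{\rm d}s$. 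From the bound of Theorem~\ref{bounded-intro} one gets
\[
\|K(\tau)\|\leqslant C\,\tau^{\alpha-1}\tau^{-\alpha(1+\gamma)}=C\,\tau^{-1-\alpha\gamma},
\]
whose exponent exceeds $-1$ because $\gamma<0$; hence the Bochner integral converges, $w$ is continuous on $[0,T]$, and the estimate $\|w(t)\|\leqslant C\int_0^t (t-s)^{-1-\alpha\gamma}\|f(s)\|\,{\rm d}s$ forces $w(0)=0$.

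The heart of the argument is to show that $w(t)\in D(A)$ for $t\in(0,T]$ with $t\mapsto Aw(t)$ continuous. The naive identity $Aw(t)=\int_0^t AK(t-s)f(s)\,{\rm d}s$ is \emph{not} available: an extra application of $A$ costs a factor $|z|\sim\tau^{-\alpha}$ in the contour integral \eqref{repre-o-intro}, giving $\|AK(\tau)\|\leqslant C\,\tau^{\alpha-1}\tau^{-\alpha(2+\gamma)}=C\,\tau^{-1-\alpha(1+\gamma)}$, an exponent strictly below $-1$, so the kernel $AK$ is not integrable. To bypass this I would use the subtraction trick, writing $f(s)=\big(f(s)-f(t)\big)+f(t)$ so that
\[
Aw(t)=\int_0^t AK(t-s)\big(f(s)-f(t)\big)\,{\rm d}s+\Big(\int_0^t AK(\tau)\,{\rm d}\tau\Big)f(t).
\]
For the first term the H\"older estimate $\|f(s)-f(t)\|\leqslant\kappa|t-s|^{\nu}$ bounds the integrand by $\kappa\,\tau^{\nu-1-\alpha(1+\gamma)}$, which is integrable \emph{exactly} when $\nu>\alpha(1+\gamma)$; this is precisely where the hypothesis on $\nu$ enters. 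For the second term, integrating the formula of Theorem~\ref{bounded-intro} gives $\int_0^t K(\tau)\,{\rm d}\tau=t^{\alpha}E_{\alpha,\alpha+1}(-t^{\alpha}A)$, and the recurrence above, valid in operator form on $D(A)$, identifies $\big(\int_0^t AK(\tau)\,{\rm d}\tau\big)f(t)$ with $f(t)-E_{\alpha}(-t^{\alpha}A)f(t)$, using $f(t)\in D(A)$. Closedness of $A$ then legitimises passing $A$ under the integral, so $w(t)\in D(A)$, and a dominated-convergence argument with the same majorants yields the continuity of $Aw$ on $(0,T]$. I expect this step --- producing a convergent representation of $Aw$ despite the non-integrable kernel $AK$ --- to be the main obstacle.

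With $Aw\in C\big((0,T];X\big)$ in hand, it remains to identify the equation. I would show that $w$ solves the equivalent Volterra integral equation $w=\prescript{RL}{0}I^{\alpha}\big(Aw+f\big)=g_{\alpha}\ast(Aw+f)$; since $Aw+f$ is now a genuine continuous function, this can be checked by a Fubini interchange combined with the scalar identities for $K$ (or again by Laplace-transform uniqueness applied to the two continuous sides). Because $1<\alpha<2$ and $Aw+f$ is continuous, applying $\prescript{C}{}\partial_{t}^{\alpha}$ to $g_{\alpha}\ast(Aw+f)$ returns $Aw+f$, so that $\prescript{C}{}\partial_{t}^{\alpha}w-Aw=f$ on $(0,T]$. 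Finally, since $w=\prescript{RL}{0}I^{\alpha}(Aw+f)$ is a Riemann--Liouville integral of order $\alpha>1$ of a function whose singularity at the origin is controlled by the constraint $\alpha(1+\gamma)<\nu\leqslant1$, both $w$ and $w'$ vanish at $t=0$, giving the initial data $w_0=w_1=0$. Together with the first paragraph this shows that $w$ is a classical solution.

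For uniqueness I would argue by linearity: the difference $v$ of two classical solutions solves the homogeneous problem \eqref{we-intro} with $f\equiv0$ and $w_0=w_1=0$, hence $Av$ is continuous on $(0,T]$ and $v=g_{\alpha}\ast Av$. A generalised Gronwall inequality for weakly singular kernels, applied to $t\mapsto\|Av(t)\|$ after using the boundedness of $Av$ on compact subintervals of $(0,T]$, then forces $v\equiv0$. I regard uniqueness as the least delicate part; the existence step, and in particular the $D(A)$-regularity of $w$, carries essentially all the difficulty.
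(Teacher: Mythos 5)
Your existence and regularity argument is, in all essentials, the paper's own proof of Theorem \ref{thm-linear-we}: the reduction of the triple convolution to the kernel $K(\tau)=\tau^{\alpha-1}E_{\alpha,\alpha}(-\tau^{\alpha}A)$ (Lemma \ref{777}), the splitting $f(s)=\big(f(s)-f(t)\big)+f(t)$, the identification of $\big(\int_0^t AK(\tau)\,{\rm d}\tau\big)f(t)$ with $f(t)-E_{\alpha}(-t^{\alpha}A)f(t)$ (the paper's Lemma \ref{uno}, obtained there by Laplace transforms rather than by the scalar recurrence, but to the same effect), the use of $\nu>\alpha(1+\gamma)$ to integrate $\tau^{\nu-1-\alpha(1+\gamma)}$, the dominated-convergence proof of continuity of $Aw$, and the verification of the equation through $w=\prescript{RL}{0}{I}^{\alpha}(Aw+f)$ all match the paper. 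That part is sound.

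The uniqueness step is where you diverge, and as written it does not close. From $v=g_{\alpha}\ast Av$ you only obtain $\|v(t)\|\leqslant\int_0^t g_{\alpha}(t-s)\|Av(s)\|\,{\rm d}s$. To run a singular Gronwall inequality on $h(t)=\|Av(t)\|$ you need a \emph{closed} inequality $h(t)\leqslant C\int_0^t(t-s)^{\beta-1}h(s)\,{\rm d}s$, and the only way to manufacture one from $v=g_{\alpha}\ast Av$ is to push $A$ through the convolution, i.e.\ $Av(t)=\int_0^t g_{\alpha}(t-s)A\big(Av(s)\big)\,{\rm d}s$, which presupposes $Av(s)\in D(A)$ --- information a classical solution does not supply. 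Nor can you replace $\|Av(s)\|$ by $C\|v(s)\|$, since $A$ is unbounded; and ``boundedness of $Av$ on compact subintervals of $(0,T]$'' feeds neither inequality. The paper sidesteps this entirely: uniqueness is proved by taking Laplace transforms of the homogeneous problem and invoking Lemma \ref{laplace} (whose hypothesis $1>\alpha(1+\gamma)$ is guaranteed here by $\nu\leqslant1$), so that $\widehat{v}(\lambda)$ is annihilated by the invertible operator $\lambda^{\alpha}I+A$ for $\lambda$ in a half-plane, whence $\widehat{v}\equiv0$ and $v\equiv0$ by uniqueness of the inverse Laplace transform. You should replace the Gronwall argument by this (or by some other device that never applies $A$ twice); the rest of your proposal stands.
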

We can illustrate it as follows:
    \begin{center}
\begin{tikzpicture}
    \begin{scope}[thick,font=\scriptsize]
    
        \draw [dotted,->] (-3,0) -- (3,0) node [above left] {$\alpha$};
    \draw [->] (0,-3) -- (0,3) node [below right] {$\nu$};
    
     \iffalse
    \else
    
    \foreach \n in {-2,-1,1,2}{%
        \draw (\n,-3pt) -- (\n,3pt)   node [above] {$\n$};
        \draw (-3pt,\n) -- (3pt,\n)   node [right] {$\n$};
    }
    \end{scope}

\draw [dotted, line width=1pt, color=blue] (0,0)-- (3,1) node [below right] {$\nu=(1+\gamma)\alpha$};

\draw [dotted, line width=1pt, color=blue] (2,0)-- (2,4) node [below right] {$\alpha=2$};

\draw [dotted, line width=1pt, color=red] (0,2)-- (4,2) node [below right] {$\nu=2$};


\draw [dotted, line width=1pt, color=black] (1,0.5)-- (1,1);

\draw [dotted, line width=1pt, color=red] (0,0)-- (3,3) node [below right] {$\nu=\alpha$};

\draw [dotted, line width=1pt, color=red] (0,0)-- (3,0) node [below right] {$\nu=0$};

\draw [line width=1pt, color=black] (1,1/3)-- (1,1);

\fill[line width=3.pt,color=green,fill=green,fill opacity=0.3] (1,1/3)--(1,1)--(0,0)--(1,1/3) -- cycle;


\fill[line width=3.pt,color=cyan,fill=cyan,fill opacity=0.3] (2,2)--(2,2/3)--(1,1/3)--(1,1)--(2,2) -- cycle;

\node[draw,text width=4cm] at (3,-2) {In blue, the new found region does the existence of the classical solution of \eqref{we-intro} for $\alpha\in(1,2)$ with $w_0=w_1=0$.};

\node[draw,text width=5cm] at (-3,-2) {In green, the region of the existence of the classical solution of \eqref{we-intro} for $\alpha\in(0,1)$ with $w_0=0,$ already known \cite{section3}. The black line is the case $\alpha=1$ \cite[Theorem 4.1]{JEE2002}.};

\end{tikzpicture}
\end{center}

\begin{theorem}\label{general-intro}
Assume that $\frac{1}{1+\gamma}>\alpha>\frac{1}{-\gamma},$ $f(t)\in D(A)$ for any $t\in(0,T]$, $f\in L^1\big((0,T);D(A)\big)$ and let $f$ be H\"older continuous with an exponent $\nu\in(0,1]$ such that $\nu>\alpha(1+\gamma).$ If $w_0,w_1\in D(A)$, then 
\[
w(t)=E_{\alpha}(-t^{\alpha}z)(A)w_0+tE_{\alpha,2}(-t^{\alpha}z)(A)w_1+(g_{\alpha-1}(s)\ast E_{\alpha}(-s^{\alpha}A)\ast f(s))(t)
\]
is the unique classical solution of \eqref{we-intro}. 
\end{theorem}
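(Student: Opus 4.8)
The plan is to exploit the linearity of problem \eqref{we-intro} and obtain the result by superposition of the two cases already settled in Theorems \ref{h-cs-we-intro} and \ref{thm-linear-we-intro}. Since both the Djrbashian--Caputo operator $\prescript{C}{}\partial_{t}^{\alpha}$ and the operator $A$ act linearly, a candidate solution should split into a homogeneous piece carrying the initial data $w_0,w_1$ and a particular piece carrying the forcing term $f$; the task is then to confirm that these two contributions combine without interfering and that the resulting function meets Definition \ref{cshc}.

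First I would set $w_h(t):=E_{\alpha}(-t^{\alpha}z)(A)w_0+tE_{\alpha,2}(-t^{\alpha}z)(A)w_1$ and $w_p(t):=(g_{\alpha-1}(s)\ast E_{\alpha}(-s^{\alpha}A)\ast f(s))(t)$, and check that the hypotheses of Theorem \ref{general-intro} contain precisely those required separately by the two earlier results: the restriction $\frac{1}{1+\gamma}>\alpha>\frac{1}{-\gamma}$ is the one feeding Theorem \ref{h-cs-we-intro} for $w_h$, while the integrability $f\in L^1\big((0,T);D(A)\big)$ together with the H\"older condition $\nu>\alpha(1+\gamma)$ is the one feeding Theorem \ref{thm-linear-we-intro} for $w_p$. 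Invoking Theorem \ref{h-cs-we-intro}, $w_h$ is a classical solution of the homogeneous equation with $w_h(0)=w_0$ and $\partial_t w_h(0)=w_1$; invoking Theorem \ref{thm-linear-we-intro}, $w_p$ is a classical solution of the linear equation with vanishing initial data. In particular, each of $w_h,w_p$ already possesses all the regularity demanded by Definition \ref{cshc}, and $w_h(t),w_p(t)\in D(A)$ for $t\in(0,T]$.

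Next I would verify by direct superposition that $w:=w_h+w_p$ solves \eqref{we-intro}. Using linearity of $\prescript{C}{}\partial_{t}^{\alpha}$ and of $A$ (the latter applicable since both $w_h(t)$ and $w_p(t)$ lie in $D(A)$), one obtains on $(0,T]$ that $\prescript{C}{}\partial_{t}^{\alpha}w-Aw=\big(\prescript{C}{}\partial_{t}^{\alpha}w_h-Aw_h\big)+\big(\prescript{C}{}\partial_{t}^{\alpha}w_p-Aw_p\big)=0+f=f$. The initial traces match as well: since $w_p(0)=0$ and $\partial_t w_p(0)=0$ while $w_h(0)=w_0$ and $\partial_t w_h(0)=w_1$, we get $w(0)=w_0$ and $\partial_t w(0)=w_1$. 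Because the regularity encoded in Definition \ref{cshc} is stable under addition, $w$ is a classical solution.

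Finally, for uniqueness I would take any other classical solution $\tilde w$ and set $v:=w-\tilde w$. By linearity $v$ is a classical solution of the homogeneous problem \eqref{we-intro} with $f\equiv 0$ and $v(0)=\partial_t v(0)=0$, whence the uniqueness contained in Theorem \ref{h-cs-we-intro} (applied with $w_0=w_1=0$) forces $v\equiv 0$. The main obstacle I anticipate is not the algebra of the superposition, which is essentially bookkeeping, but the careful checking that adding the two pieces loses no domain or regularity information --- in particular that $w_p(t)\in D(A)$ on $(0,T]$ and that the initial traces of $w_p$ genuinely vanish, so that the initial data supplied by the homogeneous part are not perturbed; this is where the interplay between the H\"older exponent $\nu$, the growth order $\gamma$, and the range $\frac{1}{1+\gamma}>\alpha>\frac{1}{-\gamma}$ must be used simultaneously.
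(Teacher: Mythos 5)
Your proposal is correct and follows essentially the same route as the paper: Theorem \ref{general} is stated there with the one-line justification that it ``follows by Theorems \ref{h-cs-we} and \ref{thm-linear-we}'', i.e.\ exactly the superposition of the homogeneous solution carrying $w_0,w_1$ and the zero-data inhomogeneous solution carrying $f$ that you carry out. Your additional bookkeeping (matching of hypotheses, stability of Definition \ref{cshc} under addition, vanishing traces of the particular part, and uniqueness via the difference of two classical solutions reduced to the homogeneous uniqueness) is precisely the content the paper leaves implicit.
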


    Let us give a graphic of the region where Theorem \ref{general-intro} holds:

\begin{center}
\begin{tikzpicture}
    \begin{scope}[thick,font=\scriptsize]
    \draw [dotted,->] (-3,0) -- (3,0) node [above left] {$\alpha$};
    \draw [->] (0,-3) -- (0,3) node [below right] {$\nu$};
    \iffalse

    \else
    \foreach \n in {-2,-1,1,2}{%
        \draw (\n,-3pt) -- (\n,3pt)   node [above] {$\n$};
        \draw (-3pt,\n) -- (3pt,\n)   node [right] {$\n$};
    }
    \end{scope}

\draw [dotted, line width=1pt, color=red] (0,0)-- (4,1) node [below right] {$\nu=\alpha(1+\gamma)$};


\draw [dotted, line width=1pt, color=blue] (1.9,0)-- (1.9,4) node [below right] {$\alpha=\frac{1}{1+\gamma}$};

\draw [dotted, line width=1pt, color=green] (2,0)-- (2,2.6) node [below right] {$\alpha=2$};

\draw [dotted, line width=1pt, color=green] (1,0)-- (1,2.6) node [below left] {$\alpha=1$};

\draw [dotted, line width=1pt, color=blue] (1.2,0)-- (1.2,4) node [below left] {$\alpha=\frac{1}{-\gamma}$};




\draw [dotted, line width=1pt, color=red] (0,0)-- (3,3) node [below right] {$\nu=\alpha$};;



\fill[line width=3.pt,color=cyan,fill=cyan,fill opacity=0.3] (1.9,0.5)--(1.9,1.9)--(1.2,1.2)--(1.2,0.3)--(1.9,0.44)-- (1.9,1.9) -- cycle;

\node[draw,text width=4cm] at (3,-2) {In blue, the new found region does the existence of the classical solution of \eqref{we-intro} for $\alpha\in(1,2).$};

\end{tikzpicture}
\end{center}
In the last Section \ref{semilinear-s}, we analyze the classical solution of the semilinear equation \eqref{we-intro}. In this case, we first show the existence and uniqueness of a mild solution in $C([0,T];D(A))$ (see Definition \ref{mild-s}) for the equation \eqref{we-intro}. The results are read as follows:
\begin{theorem}
Suppose that the nonlinear function $f(t,x):[0,T]\times X\to D(A)$ is continuous with respect to the time variable $t$ such that 
\begin{equation*}
    \|f(t,x)-f(t,y)\|_{D(A}\leqslant L\|x-y\|\quad\text{for any}\quad t\in[0,T]\quad\text{and}\quad x,y\in X,
\end{equation*}
for some constant $L>0.$ If $w_0,w_1,Aw_0\in D(A)$ and $1>\alpha(1+\gamma)$ then the problem \eqref{we-intro} has a unique mild solution in $C([0,T];D(A)).$ 
\end{theorem}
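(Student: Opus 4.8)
The plan is to prove existence and uniqueness of a mild solution via the Banach fixed point theorem on the space $C([0,T];D(A))$, exploiting the linear operators and their bounds established in Theorem \ref{bounded-intro}. First I would identify the correct notion of mild solution referenced in Definition \ref{mild-s}: guided by Theorem \ref{general-intro}, the semilinear equation should be recast as the integral fixed-point equation
\begin{equation*}
(\mathscr{F}w)(t)=E_{\alpha}(-t^{\alpha}z)(A)w_0+tE_{\alpha,2}(-t^{\alpha}z)(A)w_1+\big(g_{\alpha-1}(s)\ast E_{\alpha}(-s^{\alpha}A)\ast f(s,w(s))\big)(t),
\end{equation*}
so that a fixed point of $\mathscr{F}$ is precisely a mild solution. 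The assumptions $w_0,w_1,Aw_0\in D(A)$ ensure the inhomogeneous terms land in $D(A)$, and the hypothesis $1>\alpha(1+\gamma)$ is exactly what makes the convolution kernel integrable.

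The key steps proceed as follows. I would first verify that $\mathscr{F}$ maps $C([0,T];D(A))$ into itself: the first two terms are continuous in $D(A)$-norm by the strong continuity and differentiability properties from Theorem \ref{bounded-intro}, while the convolution term requires that $t\mapsto f(t,w(t))$ be continuous into $D(A)$, which follows from the continuity of $f$ in $t$ combined with the Lipschitz estimate in the $D(A)$-norm and the continuity of $w$. The crucial quantitative ingredient is the operator bound $\|E_{\alpha}(-s^{\alpha}A)\|\leqslant C s^{-\alpha(1+\gamma)}$, so that in the convolution
\begin{equation*}
\int_0^t\int_0^{t-r} g_{\alpha-1}(t-r-\tau)\,E_{\alpha}(-\tau^{\alpha}A)\,f(r,w(r))\,{\rm d}\tau\,{\rm d}r,
\end{equation*}
the singular factors $(t-r-\tau)^{\alpha-2}$ and $\tau^{-\alpha(1+\gamma)}$ combine to an integrable kernel precisely when $\alpha(1+\gamma)<1$ (and $\alpha>1$ controls the other factor). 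Second, I would estimate the difference $\mathscr{F}w-\mathscr{F}v$: since the first two terms cancel, only the convolution term survives, and the Lipschitz condition gives
\begin{equation*}
\|(\mathscr{F}w)(t)-(\mathscr{F}v)(t)\|_{D(A)}\leqslant L\int_0^t K(t-r)\,\|w(r)-v(r)\|_{D(A)}\,{\rm d}r,
\end{equation*}
where $K$ is the integrable kernel obtained above.

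To close the argument I would invoke a weighted-norm (Bielecki) trick: equip $C([0,T];D(A))$ with an equivalent norm $\|w\|_\lambda=\sup_{t\in[0,T]}e^{-\lambda t}\|w(t)\|_{D(A)}$, under which the convolution operator becomes a contraction for $\lambda$ sufficiently large, regardless of the size of $T$; alternatively one may use that the iterated kernels of a weakly singular Volterra operator contract after finitely many iterations, so a power $\mathscr{F}^m$ is a contraction and the standard extension of the Banach fixed point theorem applies. Either way uniqueness follows from the same Lipschitz estimate by a Gronwall-type argument for weakly singular kernels.

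The main obstacle I anticipate is the integrability and continuity bookkeeping for the double convolution near the singularities, i.e. checking that the product of $g_{\alpha-1}$ (singular like $(t-r-\tau)^{\alpha-2}$, integrable since $\alpha>1$) with the operator bound $\tau^{-\alpha(1+\gamma)}$ (integrable since $\alpha(1+\gamma)<1$) yields a kernel $K$ that is not only integrable but produces a genuinely continuous $D(A)$-valued map—this is where the condition $1>\alpha(1+\gamma)$ must be used sharply, and where one must be careful that the bound is taken in the $D(A)$-graph norm rather than merely in $X$, using that $f$ takes values in $D(A)$ and that $E_{\alpha}(-s^{\alpha}A)$ commutes with $A$ on $D(A)$.
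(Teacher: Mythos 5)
Your proposal is correct and follows essentially the same route as the paper: a fixed-point argument for the same solution operator on $C([0,T];D(A))$ with the graph norm, using the bound $\|(g_{\alpha-1}\ast E_{\alpha}(-s^{\alpha}A))(t)\|\leqslant Ct^{-1-\alpha\gamma}$ (integrable precisely because $1>\alpha(1+\gamma)$) and the Lipschitz hypothesis to get a weakly singular Volterra contraction. The paper closes by showing $H^{n}$ is a contraction via the factorial decay $\frac{(CLT^{-\alpha\gamma})^{n}}{(-\alpha\gamma)^{n}n!}$, which is one of the two equivalent devices you propose (the Bielecki weighted norm being the other); the only point worth sharpening is that $Aw_{0}\in D(A)$ is used not merely so the first term lands in $D(A)$ but to obtain its continuity at $t=0$ in the graph norm via the strong continuity of $E_{\alpha}(-t^{\alpha}A)$ on $D(A)$.
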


Finally, we conclude this section with the statement on the classical solutions of \eqref{we-intro}. There is a delicate step in defining the domain and range of the nonlinear term that will be discussed with more arguments in Section \ref{semilinear-s}. 

\begin{theorem}
Suppose that for any $k>0,$ there exits a constant $L(k)$ such that the function $f:[0,T]\times X\to D(A)$ satisfies 
\begin{equation*}
        \|f(t,w)-f(s,v)\|_{D(A)}\leqslant L(k)\big(|t-s|^{\nu}+\|w-v\|\big),\quad\text{for some}\quad \nu>\alpha(1+\gamma),
    \end{equation*}
    for any $t,s\in [0,T],$ $w,v\in X$ with $\|w\|,\|v\|\leqslant k.$ If $\frac{1}{1+\gamma}>\alpha>\frac{1}{-\gamma},$ $w_0,w_1\in D(A)$ and $w\in C([0,T];X)$ is a mild solution of \eqref{we-intro}, then $w$ is a classical solution of \eqref{we-intro}.
\end{theorem}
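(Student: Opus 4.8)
The plan is to reduce the semilinear problem to the linear theory already in hand by freezing the composite forcing term $F(t):=f(t,w(t))$. Since $w$ is a mild solution,
\[
w(t)=E_{\alpha}(-t^{\alpha}z)(A)w_0+tE_{\alpha,2}(-t^{\alpha}z)(A)w_1+\big(g_{\alpha-1}(s)\ast E_{\alpha}(-s^{\alpha}A)\ast F(s)\big)(t),
\]
and this is precisely the representation that Theorem \ref{general-intro} certifies to be the unique classical solution of the \emph{linear} equation with forcing $F$ and data $w_0,w_1$. Hence it suffices to verify that $F$ meets the hypotheses of that theorem (the convolution piece being governed by Theorem \ref{thm-linear-we-intro}, the data piece by Theorem \ref{h-cs-we-intro}): namely $F(t)\in D(A)$ for all $t\in(0,T]$, $F\in L^{1}\big((0,T);D(A)\big)$, and $F$ is H\"older continuous in the graph norm with exponent strictly larger than $\alpha(1+\gamma)$. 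Once these hold, the quoted linear theorem identifies the right-hand side above with the classical solution of \eqref{we-intro}; as $w$ equals this right-hand side, $w$ is itself a classical solution.

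The first two conditions are the easy ones. That $F(t)\in D(A)$ is built into the hypothesis $f:[0,T]\times X\to D(A)$. For integrability, $w\in C([0,T];X)$ is bounded, say $\|w(t)\|\leqslant k$ on $[0,T]$; taking $w=v=0$ in the assumed bound shows $t\mapsto f(t,0)$ is H\"older, hence bounded, in $D(A)$, and then $\|F(t)\|_{D(A)}\leqslant \|f(t,0)\|_{D(A)}+\|f(t,w(t))-f(t,0)\|_{D(A)}\leqslant \|f(t,0)\|_{D(A)}+L(k)\|w(t)\|$, so $F\in L^{\infty}\big((0,T);D(A)\big)\subset L^{1}\big((0,T);D(A)\big)$. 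The content is the third condition. From $\|F(t)-F(s)\|_{D(A)}\leqslant L(k)\big(|t-s|^{\nu}+\|w(t)-w(s)\|\big)$ it will follow once we show that $w$ is H\"older in $X$ with exponent exceeding $\alpha(1+\gamma)$: then $F$ is H\"older of exponent $\min\{\nu,1\}=\nu>\alpha(1+\gamma)$ on the bounded interval $[0,T]$.

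It therefore remains to establish the regularity of $w$, and this is exactly where the window $\frac{1}{-\gamma}<\alpha<\frac{1}{1+\gamma}$ and $w_0,w_1\in D(A)$ are used. I claim $w$ is in fact Lipschitz on $[0,T]$, and I argue on $w=w_h+w_p$ separately. The homogeneous part $w_h=E_{\alpha}(-t^{\alpha}z)(A)w_0+tE_{\alpha,2}(-t^{\alpha}z)(A)w_1$ is, by Theorem \ref{h-cs-we-intro}, the classical solution of the homogeneous problem, hence continuously differentiable up to $t=0$ (with $\partial_t w_h(0)=w_1$) and therefore Lipschitz; the only delicate point, boundedness of $\partial_t w_h$ as $t\to0^{+}$, is where $w_0,w_1\in D(A)$ enters, via the resolvent splitting $(z-A)^{-1}x=z^{-1}x+z^{-1}(z-A)^{-1}Ax$, which trades the singular weight for the vanishing one $t^{-\alpha\gamma}$. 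For the forced part, the convolution identity behind the family gives $g_{\alpha-1}(s)\ast E_{\alpha}(-s^{\alpha}A)=s^{\alpha-1}E_{\alpha,\alpha}(-s^{\alpha}A)$, so that
\[
w_p(t)=\int_0^{t}(t-s)^{\alpha-1}E_{\alpha,\alpha}(-(t-s)^{\alpha}A)F(s)\,{\rm d}s=\int_0^t\mathcal{P}(t-s)F(s)\,{\rm d}s,
\]
with $\mathcal{P}(r):=r^{\alpha-1}E_{\alpha,\alpha}(-r^{\alpha}A)$. By Theorem \ref{bounded-intro}, $\|\mathcal{P}(r)\|\leqslant Cr^{\beta}$ and $\|\mathcal{P}'(r)\|\leqslant Cr^{\beta-1}$ with $\beta:=\alpha-1-\alpha(1+\gamma)=-1-\alpha\gamma$, and the hypothesis $\alpha>\frac{1}{-\gamma}$ is exactly $\alpha\gamma<-1$, i.e. $\beta>0$: the kernel is non-singular at the origin. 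Splitting $w_p(t+h)-w_p(t)$ into the boundary integral over $(t,t+h)$, which is $O(h^{\beta+1})$, and the difference integral $\int_0^t[\mathcal{P}(r+h)-\mathcal{P}(r)]F(t-r)\,{\rm d}r$, which after the dichotomy $r<h$ versus $r\geqslant h$ is $O(h^{\beta+1})+O(h)$, we obtain $\|w_p(t+h)-w_p(t)\|\leqslant Ch$ using only $\|F\|_{\infty}$. Hence $w=w_h+w_p$ is Lipschitz on $[0,T]$, in particular H\"older of exponent $1>\alpha(1+\gamma)$.

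With the three conditions verified, Theorem \ref{thm-linear-we-intro} applied to the forcing $F$ makes $w_p$ the unique classical solution of the linear problem with zero data (and $w_p(0)=0$, $\partial_t w_p(0)=0$), while Theorem \ref{h-cs-we-intro} makes $w_h$ the classical solution of the homogeneous problem with data $w_0,w_1$; by linearity their sum solves $\prescript{C}{}\partial_{t}^{\alpha}w-Aw=f(t,w(t))$ classically with the prescribed initial data, which is the assertion. I expect the single genuine obstacle to be the borderline behaviour of $w_h$ and of the kernel $\mathcal{P}$ as $t\to0^{+}$: the raw operator bounds of Theorem \ref{bounded-intro} are non-integrable there, and only the combination $w_0,w_1\in D(A)$ (upgrading $\partial_t$ so as to carry the smoothing weight $t^{\alpha-1}$) with the sharp window $\frac{1}{-\gamma}<\alpha<\frac{1}{1+\gamma}$ (which forces $\beta>0$ while keeping $\alpha(1+\gamma)<1$) rescues Lipschitz regularity. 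It is worth recording that the argument runs in a single forward pass rather than by iteration: continuity of $w$ gives boundedness, boundedness gives $F\in L^{\infty}\big((0,T);D(A)\big)$, this gives Lipschitz $w$, and only then the H\"older bound on $F$.
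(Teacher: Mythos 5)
Your proposal is correct and shares the paper's top-level strategy for Theorem \ref{ult}: freeze $F(t)=f(t,w(t))$, show $F$ is H\"older in the graph norm with exponent exceeding $\alpha(1+\gamma)$, and then invoke the linear theory (Theorem \ref{general}) to upgrade the mild solution to a classical one. The difference is in how the time regularity of $w$ is obtained, and it is a genuine one. The paper applies the Lipschitz hypothesis inside the Duhamel term, which leaves the unknown increment $\|w(s+h)-w(s)\|$ under the singular weight $(t-s)^{-1-\alpha\gamma}$ (Lemma \ref{777}), and then closes the estimate with a singular Gr\"onwall inequality (Henry's lemma), arriving at the H\"older exponent $\theta=\min\{\nu,\,1-\alpha(1+\gamma)\}$. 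You instead shift the increment onto the kernel $\mathcal{P}(r)=r^{\alpha-1}E_{\alpha,\alpha}(-r^{\alpha}A)$, whose bounds $\|\mathcal{P}(r)\|\leqslant Cr^{\beta}$ and $\|\mathcal{P}'(r)\|\leqslant Cr^{\beta-1}$ with $\beta=-1-\alpha\gamma>0$ (this is precisely where $\alpha>\frac{1}{-\gamma}$ enters) make the kernel continuous at the origin; only $\|F\|_{\infty}$ is needed, no Gr\"onwall lemma, and you obtain the stronger conclusion that $w$ is Lipschitz rather than H\"older-$\theta$. The treatment of the homogeneous part agrees in substance with the paper's (the derivative bounds of Theorem \ref{derivative-s} and Lemma \ref{i} for $w_0,w_1\in D(A)$). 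What each route buys: yours is more elementary and yields a sharper modulus of continuity, but it is tied to the window $\beta>0$; the paper's Gr\"onwall argument would survive a mildly singular kernel and so degrades more gracefully if one tried to weaken $\alpha>1/(-\gamma)$. One point you should make explicit: Theorem \ref{h-cs-we} does not literally assert that the homogeneous part is $C^{1}$ up to $t=0$; what you actually need, and what the proof of that theorem supplies via Theorems \ref{derivative-s} and \ref{strong}, is that $\|\partial_t w_h(t)\|$ is bounded on $(0,T]$, which together with continuity of $w_h$ on $[0,T]$ gives the Lipschitz bound on the closed interval.
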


\section{Preliminaries}\label{preli}

In this section, we recall and collect several results and concepts that will be used in the entire paper. First, we begin by recalling a well-known special function (the two-parametric Mittag-Leffler function) and their properties. This function will play an important role in the development of this paper. After that, we continue with the definition of several classes of holomorphic functions that will help us to introduce some results about the functional calculus involving the almost sectorial operators. In the end, we recall some definitions and results on nonlocal operators in time.   

\subsection{Mittag-Leffler function}\label{ML-s}

Frequently, we will use the two-parametric Mittag-Leffler function:
\begin{equation}\label{bimittag}
E_{\alpha,\delta}(z)=\sum_{k=0}^{+\infty} \frac{z^k}{\Gamma(\alpha k+\delta)},\quad z,\delta\in\mathbb{C},\quad \Re(\alpha)>0,
\end{equation}
which is an entire function, absolutely and locally uniformly convergent for the given parameters (\cite{mmdjr,mittag}). Usually, if $\delta=1,$ we denote $E_{\alpha,1}(z)$ simply by $E_{\alpha}(z).$ Some classical examples are: $E_{0,\delta}(z)=\frac{1}{\Gamma(\delta)}\frac{1}{1-z},$ $E_{1}(z)=\exp(z),$ $E_{\frac {1}{2}}(z)=\exp(z^{2})\operatorname {erfc}(-z),$ $E_{2}(z)=\cosh({\sqrt {z}}),$ $E_{2}(-z^{2})=\cos(z),$ $E_{1,2}(z)={\frac {e^{z}-1}{z}},$ $E_{2,2}(z)={\frac {\sinh({\sqrt {z}})}{\sqrt {z}}},$ etc. More examples can be found in \cite{mmdjr,mittag}. To estimate the propagators associated with these Mittag-Leffler functions, we recall the inequality \cite[Theorem 1.6]{page 35}: 
\begin{equation}\label{uniform-estimate}
|E_{\alpha,\delta}(z)|\leqslant \frac{C}{1+|z|},\quad z\in\mathbb{C},\quad \delta\in\mathbb{R},\quad\alpha<2,
\end{equation}
where $\mu\leqslant |\arg(z)|\leqslant \pi$, $\pi\alpha/2<\mu<\min\{\pi,\pi \alpha\}$ and $C$ is a positive constant. The region of the complex numbers that satisfies the estimate \eqref{uniform-estimate} is given in the following picture:
\begin{center}
\begin{tikzpicture}[scale=0.7]
    \begin{scope}[thick,font=\scriptsize]
    \draw [->] (-5,0) -- (5,0) node [below left] {$\Re\{z\}$};
    \draw [->] (0,-5) -- (0,5) node [below right] {$\Im\{z\}$};
    \iffalse

\draw [line width=0.5pt] (-2.5,5) -- (0,0) node ;
    
    \draw (1,-3pt) -- (1,3pt)   node [above] {$1$};
    \draw (-1,-3pt) -- (-1,3pt) node [above] {$-1$};
    \draw (-3pt,1) -- (3pt,1)   node [right] {$i$};
    \draw (-3pt,-1) -- (3pt,-1) node [right] {$-i$};
    \else
    \foreach \n in {-4,...,-1,1,2,...,4}{%
        \draw (\n,-3pt) -- (\n,3pt)   node [above] {$\n$};
        \draw (-3pt,\n) -- (3pt,\n)   node [right] {$\n i$};
    }
    \end{scope}

\fill[line width=3.pt,color=cyan,fill=cyan,fill opacity=0.3] (-3.5,5) -- (-5.,5.) -- (-5,-5.) -- (-3.5,-5) -- (0.,0.) -- cycle;
    
\fill[line width=3.pt,color=red,fill=red,fill opacity=0.3] (-2.5,5) -- (-5.,5.) -- (-5,-5.) -- (-2.5,-5) -- (0.,0.) -- cycle; 
\draw [line width=0.5pt, color=red] (0,0)-- (-2.5,5);
\draw [dotted, line width=1pt, color=blue] (0,0)-- (-3.5,5);
\draw [dotted, line width=1pt, color=blue] (0,0)-- (-3.5,-5);
\draw [line width=0.5pt, color=red] (0,0)-- (-2.5,-5);    

\coordinate (M) at (0,0);
\coordinate (N) at (5,0);
\coordinate (R) at (-2.5,5);
\coordinate (S) at (-3.5,5);

\pic [draw,blue, angle eccentricity=1.2, angle radius=1.3cm, "$\mu$"] {angle = N--M--S};

\pic [draw,red, angle eccentricity=1.5, angle radius=0.5cm,"$\alpha\pi/2$"] 
{angle = N--M--R};
\end{tikzpicture}
\end{center}
Also, by \cite[Theorem 1.4]{page 35}, we have
\begin{equation}\label{uniform-estimate-2}
|E_{\alpha,\alpha-n}(z)|\leqslant \frac{C}{1+|z|^2}\quad\text{as}\quad |z|\to+\infty,\quad\alpha<2,\quad \quad n=0,1,2,3,\ldots.
\end{equation}


\subsection{Almost sectorial operators (the class $\Theta_\omega^\gamma$)}
Let $0<\mu<\pi$. So, we recall that $S_\mu^0$ is the open sector $\{z\in\mathbb{C}\setminus\{0\}: |\text{arg}\,z|<\mu\},$ and its closure $S_\mu:=\{z\in\mathbb{C}\setminus\{0\}: |\text{arg}\,z|\leqslant\mu\}\cup \{0\}.$ We consider the function $\text{arg}$ with values in $(-\pi,\pi].$ Set 
\[
\mathcal{F}_0^{\gamma}(S_\mu^0)=\bigcup_{s<0}\Psi_s^{\gamma}(S_\mu^0)\cup\Psi_0(S_\mu^0),
\]
and 
\[
\mathcal{F}(S_\mu^0)=\{f\in\mathcal{H}(S_\mu^0):\,\,\text{there exit}\,\,k,n\in\mathbb{N}\,\, \text{such that}\,\, f\psi_n^k\in \mathcal{F}_0^{\gamma}(S_\mu^0)\},
\]
where 
\[
\mathcal{H}(S_\mu^0)=\{f:S_\mu^0\to\mathbb{C};\,\,f\,\,\text{is holomorphic}\},
\]
\[
\mathcal{H}^{\infty}(S_\mu^0)=\{f\in \mathcal{H}(S_\mu^0),\,\,f\,\,\text{is bounded}\},
\]
\[
\varphi_0(z)=\frac{1}{1+z},\quad \psi_n(z)=\frac{z}{(1+z)^n},\quad z\in\mathbb{C}\setminus\{-1\},\,\,n\in\mathbb{N}\cup\{0\},
\]
\[
\Psi_0(S_\mu^0)=\left\{f\in\mathcal{H}(S_\mu^0):\sup_{z\in S_\mu^0}\left|\frac{f(z)}{\varphi_0(z)}\right|<+\infty\right\},
\]
and for each $s<0,$
\[
\Psi_s^{\gamma}(S_\mu^0)=\left\{f\in\mathcal{H}(S_\mu^0):\sup_{z\in S_\mu^0}|\psi_n^s(z)f(z)|<+\infty\right\},
\]
where $n$ is the smallest integer such that $n\geqslant2$ and $\gamma+1<-(n+1)s.$ Note that
\[
\mathcal{F}_0^\gamma(S_\mu^0)\subset \mathcal{H}^{\infty}(S_\mu^0)\subset \mathcal{F}(S_\mu^0)\subset \mathcal{H}(S_\mu^0), 
\]
and for $k,n\in\mathbb{N}\cup\{0\}$ with $n>k$, one has $\psi_n^k\in \mathcal{F}_0^{\gamma}(S_\mu^0).$

\subsection{Functional calculus of the class $\Theta_\omega^\gamma$} We recall some useful results on the functional calculus involving the almost sectorial operators.  

Below we always denote by $\Gamma_\theta$ $(0<\theta<\pi)$ the path 
\begin{equation}\label{path}
\{re^{-i\theta}:r>0\}\cup\{re^{i\theta}:r>0\}
\end{equation}
oriented such that the sector $S_\theta^0$ lies to the left of $\Gamma_\theta.$

\begin{theorem}\label{thm-main}
Let $A\in \Theta_\omega^\gamma$ and $\omega<\theta<\mu<\pi.$ The following statements hold:
\begin{enumerate}
    \item For every $f\in \mathcal{F}_0^\gamma(S_\mu^0)$, the integral 
\begin{equation}\label{integral-fc}
f(A)=\frac{1}{2\pi i}\int_{\Gamma_\theta}f(z)(z-A)^{-1}{\rm d}z
\end{equation}
is absolutely convergent and defines a bounded linear operator on $X.$ Also, its value does not depend on the choice of $\theta$ for $\omega<\theta<\mu.$
\item (Product formula) For all $f,g\in \mathcal{F}_0^\gamma(S_\mu^0)$, we have $fg\in \mathcal{F}_0^\gamma(S_\mu^0)$ and $(fg)(A)=f(A)g(A).$

Moreover, for all $f,g\in \mathcal{F}(S_\mu^0)$, we have that $f(A)g(A)\subset(fg)(A).$ Also, if $D[(fg)(A)]\subset D[g(A)],$ then $f(A)g(A)=(fg)(A).$ Furthermore, if $g(A)$ is bounded, then $f(A)g(A)=(fg)(A).$ 
\end{enumerate}
 
\end{theorem}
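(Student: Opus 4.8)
The plan is to establish the two parts in sequence: first that the Dunford-type integral \eqref{integral-fc} converges absolutely and defines a bounded operator independent of the contour, and then the multiplicativity, which I would prove on $\mathcal{F}_0^\gamma(S_\mu^0)$ and then push up to $\mathcal{F}(S_\mu^0)$.

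For part (1), I would first pin down the resolvent bound on the contour. Choosing an auxiliary angle $\theta'$ with $\omega<\theta'<\theta$, every $z\in\Gamma_\theta$ has $|\arg z|=\theta>\theta'$, so $z\notin S_{\theta'}$ and \eqref{intro-res} gives $\|(z-A)^{-1}\|\leqslant C|z|^{\gamma}$ along $\Gamma_\theta$. Parametrizing the two rays by $z=re^{\pm i\theta}$ reduces the question to the scalar integral $\int_0^{+\infty}|f(re^{\pm i\theta})|\,r^{\gamma}\,\mathrm{d}r$. I would then split according to $\mathcal{F}_0^\gamma(S_\mu^0)=\bigcup_{s<0}\Psi_s^\gamma(S_\mu^0)\cup\Psi_0(S_\mu^0)$: if $f\in\Psi_0(S_\mu^0)$ then $|f(z)|\lesssim|1+z|^{-1}$, so the integrand behaves like $r^{\gamma}$ near $0$ (integrable since $\gamma>-1$) and like $r^{\gamma-1}$ near $+\infty$ (integrable since $\gamma<0$); if $f\in\Psi_s^\gamma(S_\mu^0)$ the weight $\psi_n^s$ forces $f$ to vanish at $0$ and to decay at $+\infty$, and the defining constraints $n\geqslant 2$ and $\gamma+1<-(n+1)s$ are what make both endpoints integrable. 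Absolute convergence gives boundedness through $\|f(A)\|\leqslant\frac{1}{2\pi}\int_{\Gamma_\theta}|f(z)|\,\|(z-A)^{-1}\|\,|\mathrm{d}z|$, and linearity is inherited from the integral. Independence of $\theta$ follows by a standard Cauchy argument: for $\omega<\theta_1<\theta_2<\mu$ the map $z\mapsto f(z)(z-A)^{-1}$ is holomorphic on the region between $\Gamma_{\theta_1}$ and $\Gamma_{\theta_2}$ (holomorphy of $f$ on $S_\mu^0$ and of the resolvent off $\sigma(A)$), and closing with circular arcs at radii $\varepsilon$ and $R$ whose contributions vanish as $\varepsilon\to0^+$ and $R\to+\infty$ (by the endpoint estimates just obtained) shows the two integrals coincide.

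For part (2), I would first check that $\mathcal{F}_0^\gamma(S_\mu^0)$ is stable under products by adding the decay exponents of the weights, a direct verification from the definitions of $\Psi_0$ and $\Psi_s^\gamma$. The heart is $(fg)(A)=f(A)g(A)$ for $f,g\in\mathcal{F}_0^\gamma(S_\mu^0)$, which I would prove by a nested-contour/Fubini technique. Writing $f(A)$ on $\Gamma_{\theta_f}$ and $g(A)$ on $\Gamma_{\theta_g}$ with $\omega<\theta_g<\theta_f<\mu$, the product is a double integral whose integrand is dominated by $|f(z)|\,|g(w)|\,C^2|z|^{\gamma}|w|^{\gamma}$, a product of two convergent single integrals; hence Fubini applies. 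Inserting the resolvent identity $(z-A)^{-1}(w-A)^{-1}=\bigl[(z-A)^{-1}-(w-A)^{-1}\bigr]/(w-z)$ splits the double integral into two pieces, each of which I collapse by an inner Cauchy integral: for $w\in\Gamma_{\theta_g}$, which lies in $S_{\theta_f}^0$, one has $\frac{1}{2\pi i}\int_{\Gamma_{\theta_f}}\frac{f(z)}{w-z}\,\mathrm{d}z=-f(w)$ (the point is enclosed), whereas for $z\in\Gamma_{\theta_f}$, which lies outside $S_{\theta_g}^0$, one has $\frac{1}{2\pi i}\int_{\Gamma_{\theta_g}}\frac{g(w)}{w-z}\,\mathrm{d}w=0$ (the point is not enclosed), both after closing with vanishing arcs as in part (1). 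Tracking orientations and signs, the first piece drops out and what survives is $\frac{1}{2\pi i}\int_{\Gamma_{\theta_g}}f(w)g(w)(w-A)^{-1}\,\mathrm{d}w=(fg)(A)$.

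Finally, for $f,g\in\mathcal{F}(S_\mu^0)$ I would use the regularized definition: picking $e=\psi_n^k$ (with $n>k$, so $e(A)$ is bounded by part (1)) such that $fe\in\mathcal{F}_0^\gamma(S_\mu^0)$ and $e(A)$ is injective (inherited from the injectivity of $A$), one sets $f(A)=e(A)^{-1}(fe)(A)$. The inclusion $f(A)g(A)\subset(fg)(A)$ then follows by applying the $\mathcal{F}_0^\gamma$ product formula to the regularized factors and commuting the bounded regularizers through, which shows that on $D(f(A)g(A))$ the two operators agree and that this domain sits inside $D[(fg)(A)]$; the reverse inclusion, hence equality, is obtained precisely under the stated hypotheses, since $D[(fg)(A)]\subset D[g(A)]$ forces $g(A)x\in D(f(A))$ for $x\in D[(fg)(A)]$, while boundedness of $g(A)$ gives $D(g(A))=X$ and an everywhere-defined composition. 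I expect the main obstacle to be the multiplicativity on $\mathcal{F}_0^\gamma(S_\mu^0)$: keeping the Fubini estimate and the two nested-contour Cauchy evaluations (with the vanishing of the connecting arcs) under simultaneous control, as this is exactly where the decay encoded in the weights $\psi_n^s$ and the restriction $\gamma\in(-1,0)$ are genuinely used. The passage to $\mathcal{F}(S_\mu^0)$ is then mostly careful domain bookkeeping with operators that may be unbounded and non-densely defined, the delicate point there being the injectivity and commutation properties of the regularizer $e(A)$.
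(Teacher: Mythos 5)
Your proposal is essentially correct, but note that the paper itself gives no proof of this theorem: it is recalled verbatim from the functional calculus of Periago and Straub \cite{JEE2002}, and your argument is precisely the standard one found there (resolvent estimate plus the decay built into $\mathcal{F}_0^\gamma$ for absolute convergence, Cauchy's theorem for contour independence, the resolvent identity with nested contours for multiplicativity, and regularization by $\psi_n^k$ for the extension to $\mathcal{F}(S_\mu^0)$). The one point worth tightening is the Fubini step in the product formula: your dominating function $|f(z)|\,|g(w)|\,C^2|z|^{\gamma}|w|^{\gamma}$ controls the double integral before the resolvent identity, but after splitting you also need the factor $|w-z|^{-1}$ under control where the two contours meet at the origin, which requires the elementary lower bound $|w-z|\geqslant \sin(\theta_f-\theta_g)\max(|z|,|w|)$ for $z\in\Gamma_{\theta_f}$, $w\in\Gamma_{\theta_g}$.
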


\subsection{Nonlocal operators in time}
We begin by recalling some Sobolev spaces. Hence, let $I=(0,T)$ for some $T>0,$ $n\in\mathbb{N},$ $1\leqslant q<+\infty$ and:
\[
W^{n,q}(I;X):=\left\{u\,\,\big/\,\, \exists \phi\in L^{q}(I;X):\,\, u(t)=\sum_{j=0}^{n-1}a_j \frac{t^{j}}{j!}+\frac{t^{n-1}}{(n-1)!}\ast \phi(t),\,\, t\in I\right\}.
\]
Here, $\phi(t)=u^{(n)}(t),$ $a_j=u^{(j)}(0)$ and $X$ is a complex Banach space.

\medskip For $u\in L^1(I;X),$ we recall the Riemann-Liouville fractional integral of order $\rho>0$:
\[
\prescript{RL}{0}{I}^{\rho}u(t)=\frac1{\Gamma(\rho)}\int_0^t (t-s)^{\rho-1}u(s)\,\mathrm{d}s=(g_{\rho}\ast u)(t),\quad t>0,
\]
where $\prescript{RL}{0}{I}^{0}u(t)=u(t).$

Below, $\lceil \rho \rceil$ denotes the smallest integer greater than or equal to $\rho.$

\begin{definition}
Let $u\in L^1(I;X)$ and $g_{n-\rho}\ast u\in W^{n,1}(I;X)$ $(\lceil \rho \rceil=n).$ The Riemann-Liouville fractional derivative of order $\rho$ is defined as
\[
\prescript{RL}{0}D^{\rho}u(t)=\left(\frac{{\rm d}}{{\rm d}t}\right)^n \prescript{RL}{0}I^{n-\rho}u(t).
\]
\end{definition}
\begin{definition}
Assume that $u\in W^{n,1}(I;X),$ then the Djrbashian-Caputo fractional derivative of order $\rho>0$ is defined by 
\[
\prescript{C}{0}D^{\rho}u(t)=\prescript{RL}{0}{I}^{n-\rho}\left(\frac{{\rm d}}{{\rm d}t}\right)^n u(t).
\]
Also, note that for $u\in C^{n-1}(I;X),$ $g_{n-\rho}\ast u\in W^{n,1}(I;X)$ $(n\in\mathbb{N},0\leqslant n-1<\rho<n),$ we get 
\[
\prescript{C}{0}D^{\rho}u(t)=\left(\frac{{\rm d}}{{\rm d}t}\right)^{n}\prescript{RL}{0}{I}^{n-\rho}\left(u(t)-\sum_{k=0}^{n-1}u^{(k)}(0)g_{k+1}(t)\right).
\]
\end{definition}
The above operator is usually called the regularized Caputo (or Djrbashian-Caputo) fractional derivative. In this paper, we always use this regularized operator.  

\begin{remark}
Let us provide some important details in the real case about the difference to use the Djrbashian-Caputo fractional derivative and its regularized version. Thus, we fix a finite interval $[a,T]\subseteq\mathbb{R}.$ We denote by:
\begin{align*}
AC[a,T]&=\left\{f:[a,T]\to\mathbb{R}\;:\;f\text{ absolutely continuous on }[a,T]\right\}; \\
AC^n[a,T]&=\left\{f:[a,T]\to\mathbb{R}\;:\;f^{(n-1)}\text{ exists and in }AC[a,T]\right\},\qquad n\in\mathbb{N}.
\end{align*}
For any function $f\in AC^n[a,T]$, both definitions are equivalent \cite[Theorem 2.2]{samko}. However, the main difference between the two definitions is the possibility of defining the regularized version in a larger function space. An illustrative example of this situation is given in \cite{example}, where a function has not a first-order derivative but has Riemann--Liouville fractional derivative of all orders less than one.
\end{remark}

\section{Homogeneous abstract Volterra equations of wave type}\label{havewt}

In this section, we consider two kind of homogeneous abstract Volterra equations of wave type. The first case is when the operator $A$ belongs to the class $\Theta_\omega^\gamma.$ While, the second one deals with a power of the operator $A.$ 

\subsection{$A$-Abstract Volterra equations of wave type}
We study the following equation:
\begin{align}\label{we}
\prescript{C}{}\partial_{t}^{\alpha}w(t)-Aw(t)&=0,\quad 0<t\leqslant T,\quad 1<\alpha<2,\nonumber \\
w(t)|_{_{_{t=0}}}&=w_0, \\
\partial_t w(t)|_{_{_{t=0}}}&=w_1,\nonumber
\end{align}
where $X$ is a complex Banach space, $A\in \Theta_\omega^\gamma$ with $\omega<\theta<\mu<\pi-\alpha\frac{\pi}{2}$ and in principle (this will be clarified later) $w_0,w_1\in X.$

\medskip Let us recall a definition of the nature of the solutions of \eqref{we}. 

\begin{definition}\label{cshc}
A function $w\in C([0,T];X)$ is called a classical solution of the problem \eqref{we} if $^C\partial_t^{\alpha}w\in C((0,T];X),$ $w(t)\in D(A)$ for all $t\in(0,T]$ and satisfies the problem \eqref{we}.
\end{definition}

\begin{remark}
An interesting issue in the above definition is the fact that $w\in C([0,T];X)$ but $^C\partial_t^{\alpha}w\in C((0,T];X).$ Therefore, we provide an illustrative example in the real numbers $\mathbb{R}$ of it. In fact, take $w(t)=t^{\beta}$ for $2>\alpha>\beta>1$ and $t\in [0,1].$ Clearly, $w\in C([0,1];\mathbb{R}).$ Also, by performing some elementary calculations, we obtain $\prescript{C}{}\partial_t^{\alpha}w(t)=\frac{\Gamma(2-\alpha)\Gamma(\beta-1)}{\Gamma(2-\alpha+\beta-1)}t^{-\alpha+\beta}.$ Since $-\alpha+\beta<0$ we have an integrable singularity at $t=0$ of the latter function, and hence $\prescript{C}{}\partial_t^{\alpha}w\in C((0,1];\mathbb{R}).$
\end{remark}

The solution of equation \eqref{we} is connected with the following operators:
\[
E_\alpha(-t^{\alpha}z)(A)=\frac{1}{2\pi i}\int_{\Gamma_\theta}E_{\alpha}(-t^{\alpha}z)(z-A)^{-1}{\rm d}z,\quad t>0,
\]
and
\[
tE_{\alpha,2}(-t^{\alpha}z)(A)=\frac{t}{2\pi i}\int_{\Gamma_\theta}E_{\alpha,2}(-t^{\alpha}z)(z-A)^{-1}{\rm d}z,\quad t>0,
\]
where $\Gamma_\theta$ is the integral contour of \eqref{path}, $A\in \Theta_\omega^\gamma,$ $-1<\gamma<0$ and $\omega<\theta<\mu<\pi-\alpha\frac{\pi}{2}.$ Usually, in some contexts, we also denote $E_\alpha(-t^{\alpha}z)(A)$ simply by $E_\alpha(-t^{\alpha}A).$ 

\begin{remark}\label{rerewrite}
    From the equality $tE_{\alpha,2}(-t^{\alpha}z)=\big(1\ast E_{\alpha}(-s^{\alpha}z)\big)(t)$ for any $1<\alpha<2$ and $z\in\mathbb{C},$ we can rewrite the operator $tE_{\alpha,2}(-t^{\alpha}A)$ as $tE_{\alpha,2}(-t^{\alpha}A)=(1\ast E_{\alpha}(-s^{\alpha}A))(t).$
\end{remark}

Below we provide some properties of a more general class of operators, in particular, it contains $E_\alpha(-t^{\alpha}A)$ and $tE_{\alpha,2}(-t^{\alpha}A)$.

\begin{theorem}\label{bounded}
Let $A\in \Theta_\omega^\gamma,$ $-1<\gamma<0$ and $\omega<\theta<\mu<\pi-\alpha\frac{\pi}{2}.$ For each fixed $t>0$, the following operator 
\begin{equation}\label{repre-o}
E_{\alpha,\delta}(-t^{\alpha}z)(A)=\frac{1}{2\pi i}\int_{\Gamma_\theta}E_{\alpha,\delta}(-t^{\alpha}z)(z-A)^{-1}{\rm d}z,\quad \delta\in\mathbb{R},\quad \alpha<2,
\end{equation}
is linear and bounded on $X.$ Besides, the operator $E_{\alpha,\delta}(-t^{\alpha}z)(A)$ is strongly continuous in $[t_0,+\infty)$ for every $t_0>0$. Moreover, there exists a positive constant $C(\alpha,\delta,\gamma)$ such that
\begin{equation}\label{asymp-g}
\|E_{\alpha,\delta}(-t^{\alpha}z)(A)\|\leqslant C(\alpha,\delta,\gamma)t^{-\alpha(1+\gamma)}.
    \end{equation}
Also, the mapping $t\to t^{\delta-1}E_{\alpha,\delta}(-t^{\alpha}z)(A)$ is $n\,(\text{for any}\,\,n\in\mathbb{N})$ times continuously differentiable such that 
\[
\partial_t^{n}\big(t^{\delta-1}E_{\alpha,\delta}(-t^{\alpha}z)(A)\big)=t^{\delta-n-1}E_{\alpha,\delta-n}(-t^{\alpha}z)(A),
\]
and it defines a bounded linear operator in $X.$ Here we have 
\[
\|\partial_t^{n}\big(t^{\delta-1}E_{\alpha,\delta}(-t^{\alpha}z)(A)\big)\|\leqslant C(\alpha,\delta-n,\gamma)t^{\delta-n-1-\alpha(1+\gamma)}.
\]
\end{theorem}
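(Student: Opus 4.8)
The plan is to reduce everything to two scalar facts: the uniform Mittag-Leffler bound \eqref{uniform-estimate} and the resolvent estimate \eqref{intro-res}, both evaluated along the contour $\Gamma_\theta$ of \eqref{path}. The crux, and the step I expect to be the main obstacle, is to verify that $\Gamma_\theta$ lies inside the angular region where \eqref{uniform-estimate} is valid, since all subsequent estimates rest on having that uniform bound available with a single constant along the whole contour. Parametrizing $z=re^{\pm i\theta}$ ($r>0$), one has $-t^\alpha z=t^\alpha r\,e^{\pm i(\theta-\pi)}$, so $|\arg(-t^\alpha z)|=\pi-\theta$ is constant along each ray. Since $\theta<\mu<\pi-\alpha\frac{\pi}{2}$, we get $\pi-\theta>\pi-\mu>\frac{\alpha\pi}{2}$, so one can fix a number $\nu$ with $\frac{\alpha\pi}{2}<\nu\leqslant\pi-\theta$ and $\nu<\min\{\pi,\pi\alpha\}$; then \eqref{uniform-estimate} applies with one constant along all of $\Gamma_\theta$, giving $|E_{\alpha,\delta}(-t^\alpha z)|\leqslant C(1+t^\alpha r)^{-1}$. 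This is precisely where the restriction $\mu<\pi-\alpha\frac{\pi}{2}$ is used, and it is what makes the oscillatory Mittag-Leffler symbol integrable against the resolvent.

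Granting this, boundedness and the estimate \eqref{asymp-g} follow from a direct majorization. Using $\|(z-A)^{-1}\|\leqslant C_\mu r^\gamma$ and $|\mathrm{d}z|=\mathrm{d}r$,
\[
\|E_{\alpha,\delta}(-t^\alpha z)(A)\|\leqslant \frac{1}{\pi}\int_0^{+\infty}\frac{C\,C_\mu\, r^{\gamma}}{1+t^\alpha r}\,\mathrm{d}r,
\]
and the substitution $s=t^\alpha r$ turns this into $C\,t^{-\alpha(1+\gamma)}\int_0^{+\infty}\frac{s^{\gamma}}{1+s}\,\mathrm{d}s$. The last integral converges exactly because $-1<\gamma<0$ (integrability at $0$ needs $\gamma>-1$, at $\infty$ needs $\gamma<0$) and equals $-\pi/\sin(\pi\gamma)$, yielding \eqref{asymp-g}; absolute convergence simultaneously gives linearity and boundedness. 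Alternatively, the same pointwise bound shows $z\mapsto E_{\alpha,\delta}(-t^\alpha z)$ lies in $\Psi_0(S_\mu^0)\subset\mathcal{F}_0^\gamma(S_\mu^0)$, so well-definedness is also immediate from Theorem \ref{thm-main}(1).

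For strong continuity on $[t_0,+\infty)$, fix $x\in X$ and let $t'\to t$ with $t,t'\geqslant t_0$. The integrand of $E_{\alpha,\delta}(-t^\alpha z)(A)x-E_{\alpha,\delta}(-(t')^\alpha z)(A)x$ tends to $0$ pointwise on $\Gamma_\theta$ because $E_{\alpha,\delta}$ is entire, and it is dominated by $2C\,C_\mu\, r^\gamma (1+t_0^\alpha r)^{-1}\|x\|$ (using $t^\alpha,(t')^\alpha\geqslant t_0^\alpha$), which is integrable by the computation above and independent of $t,t'$. Dominated convergence then forces the difference to $0$; since the scalar factor $|E_{\alpha,\delta}(-t^\alpha z)-E_{\alpha,\delta}(-(t')^\alpha z)|$ is itself dominated and tends to $0$ pointwise, the same computation without $x$ even yields continuity in operator norm.

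Finally, for the differentiability statement I would first record the scalar identity obtained by differentiating the series \eqref{bimittag} termwise (justified by local uniform convergence) and using $\Gamma(\beta+1)=\beta\Gamma(\beta)$:
\[
\frac{\mathrm{d}}{\mathrm{d}t}\big(t^{\delta-1}E_{\alpha,\delta}(-t^\alpha z)\big)=t^{\delta-2}E_{\alpha,\delta-1}(-t^\alpha z).
\]
Then I would differentiate \eqref{repre-o} under the integral sign. Writing $f_z(\tau)=\tau^{\delta-1}E_{\alpha,\delta}(-\tau^\alpha z)$, the scalar difference quotient equals $\int_0^1 f_z'\big(s+\sigma(t-s)\big)\,\mathrm{d}\sigma$; on any $[a,b]\subset(0,+\infty)$ containing $s,t$ this is dominated by a fixed integrable function of $r$ (again via \eqref{uniform-estimate} with $\tau^\alpha\geqslant a^\alpha$), so dominated convergence lets me pass to the limit inside \eqref{repre-o} and obtain $\partial_t\big(t^{\delta-1}E_{\alpha,\delta}(-t^\alpha z)(A)\big)=t^{\delta-2}E_{\alpha,\delta-1}(-t^\alpha z)(A)$. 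Since the right-hand side is again of the form $t^{\tilde\delta-1}E_{\alpha,\tilde\delta}(-t^\alpha z)(A)$ with $\tilde\delta=\delta-1$, iterating $n$ times yields the stated formula; each derivative is strongly continuous by the continuity step and is bounded by \eqref{asymp-g} applied with $\delta-n$ in place of $\delta$, giving the claimed estimate $C(\alpha,\delta-n,\gamma)\,t^{\delta-n-1-\alpha(1+\gamma)}$.
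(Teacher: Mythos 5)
Your proposal is correct and follows essentially the same route as the paper: verify via the condition $\mu<\pi-\alpha\frac{\pi}{2}$ that $|\arg(-t^{\alpha}z)|>\alpha\pi/2$ on $\Gamma_\theta$ so that the uniform bound \eqref{uniform-estimate} applies, majorize the contour integral by $\int_0^{+\infty}r^{\gamma}(1+t^{\alpha}r)^{-1}\,\mathrm{d}r$ to get boundedness and \eqref{asymp-g}, use dominated convergence for strong continuity, and combine the scalar identity $\partial_t^{n}\big(t^{\delta-1}E_{\alpha,\delta}(-t^{\alpha}z)\big)=t^{\delta-n-1}E_{\alpha,\delta-n}(-t^{\alpha}z)$ with differentiation under the integral sign for the last part. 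The only differences are cosmetic: you derive the scalar derivative identity from the series and justify the interchange via difference quotients, where the paper cites \cite[Formula (4.9.5)]{mittag} and the Leibniz rule.
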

\begin{proof}
We begin by showing that the function $E_{\alpha,\delta}(-t^{\alpha}z)$ belongs to $\mathcal{F}_0^\gamma(S_\mu^0)$ for some fixed $t>0$ and any point $z\in S_\mu^0.$ First, we assume that $0\leqslant\text{arg}(z)<\mu.$ Therefore, $\text{arg}(-t^{\alpha}z)=-\pi+\text{arg}(z).$ Thus, $|\text{arg}(-t^{\alpha}z)|=\pi-\text{arg}(z)>\pi-\mu>\alpha \pi/2$ since $\mu<\pi-\alpha\frac{\pi}{2}.$ This means that we can use the estimate \eqref{uniform-estimate} to obtain $|E_{\alpha,\delta}(-t^{\alpha}z)|\leqslant \frac{C}{1+t^{\alpha}|z|}\leqslant \frac{C_t}{1+|z|}.$ Analogously, we have the same latter estimates for the case $0\geqslant\text{arg}(z)>-\mu.$ Hence the function $E_{\alpha,\delta}(-t^{\alpha}z)$ is in the class $\mathcal{F}_0^\gamma(S_\mu^0).$ Note that by Theorem \ref{thm-main}, the operator $E_{\alpha,\delta}(-t^{\alpha}z)(A)$ is a well-defined bounded linear operator on $X,$ and it has the representation \eqref{repre-o}. Also, let $\omega<\tilde{\mu}<\theta.$ Then, for any $z\in\Gamma_\theta,$ we have
\begin{align}
\|E_{\alpha,\delta}(-t^{\alpha}z)(A)\|&\leqslant \frac{1}{2\pi}\int_{\Gamma_\theta}|E_{\alpha,\delta}(-t^{\alpha}z)|\|(z-A)^{-1}\||{\rm d}z| \nonumber\\
&\leqslant \frac{C C_{\tilde{\mu}}}{\pi}\int_0^{+\infty}\frac{r^{\gamma}}{1+t^{\alpha}r}{\rm d}r=\frac{C(\alpha,\delta,\gamma)}{t^{\alpha \gamma+\alpha}},\label{u-esti}  
\end{align}
hence it follows \eqref{asymp-g}. Let us now fix $t_0>0.$ So, for any $t>0$ and $x\in X$, we have that 
\[
E_{\alpha,\delta}(-t^{\alpha}z)(A)x-E_{\alpha,\delta}(-t_0^{\alpha}z)(A)x=\frac{1}{2\pi i}\int_{\Gamma_\theta}\big(E_{\alpha,\delta}(-t^{\alpha}z)-E_{\alpha,\delta}(-t_0^{\alpha}z)\big)(z-A)^{-1}x{\rm d}z.
\]
By the estimate \eqref{u-esti} and the Dominated Convergence Theorem, we obtain that
\[
\lim_{t\to t_0^{+}}E_{\alpha,\delta}(-t^{\alpha}z)(A)x=E_{\alpha,\delta}(-t_0^{\alpha}z)(A)x,
\]
so that it is strongly continuous in $[t_0,+\infty).$

On the other hand, we already know that $E_{\alpha,\delta-n}(-t^{\alpha}z)(A)$ is a bounded linear operator in $X$ for any $n\in\mathbb{N}.$ By using this, the identity $\partial_t^{n}\big(t^{\delta-1}E_{\alpha,\delta}(-t^{\alpha}z)\big)=t^{\delta-n-1}E_{\alpha,\delta-n}(-t^{\alpha}z)$ which holds for any $n\in\mathbb{N}$ and $z\in\mathbb{C}$ \cite[Formula (4.9.5)]{mittag}, and by the application of $n$ times of the Leibniz integral rule, we have that 
\begin{align*}
t^{\delta-n-1}E_{\alpha,\delta-n}(-t^{\alpha}z)(A)x&=\frac{t^{\delta-n-1}}{2\pi i}\int_{\Gamma_\theta}E_{\alpha,\delta-n}(-t^{\alpha}z)(z-A)^{-1}x{\rm d}z,\quad x\in X, \\
&=\frac{1}{2\pi i}\int_{\Gamma_\theta}\partial_t^{n}\big(t^{\delta-1}E_{\alpha,\delta}(-t^{\alpha}z)\big)(z-A)^{-1}{\rm d}z \\
&=\left(\frac{\partial}{\partial t}\right)^n\left(\frac{t^{\delta-1}}{2\pi i}\int_{\Gamma_\theta}E_{\alpha,\delta}(-t^{\alpha}z)(z-A)^{-1}{\rm d}z\right) \\
&=\partial_t^{(n)}\big(t^{\delta-1}E_{\alpha,\delta}(-t^{\alpha}z)(A)\big),
\end{align*}
which means that the function  $t^{\delta-1}E_{\alpha,\delta}(-t^{\alpha}z)(A)$ is $n$ times continuously differentiable on $(0,+\infty).$ In addition, since the estimate \eqref{asymp-g} holds, we get that 
\begin{align*}
\|\partial_t^{n}\big(t^{\delta-1}E_{\alpha,\delta}(-t^{\alpha}z)(A)\big)\|&\leqslant t^{\delta-n-1}\|E_{\alpha,\delta-n}(-t^{\alpha}z)(A)\| \\
&\leqslant C(\alpha,\delta-n,\gamma)t^{\delta-n-1-\alpha(1+\gamma)}.
\end{align*} \end{proof}

\begin{remark}
It is known that the Gamma function $\Gamma(z)$ $(z\in\mathbb{C})$ has no zeros in the real line and poles at $z=0,-1,-2,\ldots$ \cite[Appendix A]{mittag}. Therefore, for some values of $\alpha,\delta,n,$ the function $E_{\alpha,\delta-n}(-t^{\alpha}z)$ in Theorem \ref{bounded} could be zero or has some terms equal to zero. In any case, this function must be carefully analyzed.  
\end{remark}
As an immediate consequence of Theorem \ref{bounded}, we obtain the following assertion.
\begin{corollary}
Let $A\in \Theta_\omega^\gamma,$ $-1<\gamma<0$ and $\omega<\theta<\mu<\pi-\alpha\frac{\pi}{2}.$ For each fixed $t>0$, $S_\alpha(t)$ and $T_\alpha(t)$ are $n\,(n\in\mathbb{N})$ times continuously differentiable on $(0,+\infty)$ and bounded linear operators on $X.$ Also, there exist positive constants $C_{1,2}(\alpha,\gamma)$ such that
\begin{equation}\label{asymp}
\|E_\alpha(-t^{\alpha}A)\|\leqslant C_{1}(\alpha,\gamma)t^{-\alpha(1+\gamma)},\quad \|tE_{\alpha,2}(-t^{\alpha}A)\|\leqslant C_{2}(\alpha,\gamma)t^{1-\alpha(1+\gamma)}.
    \end{equation}
\end{corollary}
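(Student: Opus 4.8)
The plan is to obtain the Corollary by specializing Theorem \ref{bounded} to the two parameter values $\delta=1$ and $\delta=2$, since $S_\alpha(t)$ and $T_\alpha(t)$ are precisely the operators produced by those choices. First I would record the identifications $S_\alpha(t)=E_\alpha(-t^\alpha A)=E_{\alpha,1}(-t^\alpha z)(A)$ and $T_\alpha(t)=tE_{\alpha,2}(-t^\alpha A)=t\,E_{\alpha,2}(-t^\alpha z)(A)$, and note that each fits the template $t^{\delta-1}E_{\alpha,\delta}(-t^\alpha z)(A)$ of Theorem \ref{bounded}: the choice $\delta=1$ gives the factor $t^{\delta-1}=t^0=1$, so that the template reduces to $S_\alpha(t)$ itself, while $\delta=2$ gives $t^{\delta-1}=t$, so the template equals $tE_{\alpha,2}(-t^\alpha z)(A)=T_\alpha(t)$.

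With these identifications, boundedness and linearity on $X$ for each fixed $t>0$ are immediate from the first assertion of Theorem \ref{bounded}, which is valid for all $\delta\in\mathbb{R}$ and $\alpha<2$, in particular for our $1<\alpha<2$. The $n$-times continuous differentiability on $(0,+\infty)$ follows verbatim from the differentiability part of that theorem applied to $t^{\delta-1}E_{\alpha,\delta}(-t^\alpha z)(A)$ with $\delta=1$ and $\delta=2$, respectively.

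For the norm bounds I would invoke the estimate \eqref{asymp-g}. Taking $\delta=1$ yields $\|E_\alpha(-t^\alpha A)\|=\|E_{\alpha,1}(-t^\alpha z)(A)\|\leqslant C(\alpha,1,\gamma)t^{-\alpha(1+\gamma)}$, and setting $C_1(\alpha,\gamma):=C(\alpha,1,\gamma)$ gives the first inequality. For the second, \eqref{asymp-g} with $\delta=2$ gives $\|E_{\alpha,2}(-t^\alpha z)(A)\|\leqslant C(\alpha,2,\gamma)t^{-\alpha(1+\gamma)}$; multiplying by the scalar factor $t$ produces $\|tE_{\alpha,2}(-t^\alpha A)\|\leqslant C(\alpha,2,\gamma)t^{1-\alpha(1+\gamma)}$, and we put $C_2(\alpha,\gamma):=C(\alpha,2,\gamma)$.

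There is essentially no genuine obstacle here, as the statement is flagged as an immediate consequence; the only point demanding care is the bookkeeping of the factor $t^{\delta-1}$, namely keeping track that $S_\alpha$ corresponds to the trivial factor $t^0$ while $T_\alpha$ carries the extra power of $t$ that shifts the exponent in the estimate from $-\alpha(1+\gamma)$ to $1-\alpha(1+\gamma)$.
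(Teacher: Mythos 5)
Your proposal is correct and matches the paper's intent exactly: the paper states this corollary as an immediate consequence of Theorem \ref{bounded} and gives no separate proof, and your specialization to $\delta=1$ and $\delta=2$ (with the bookkeeping of the factor $t^{\delta-1}$ and the resulting shift of the exponent from $-\alpha(1+\gamma)$ to $1-\alpha(1+\gamma)$) is precisely the argument intended. You also correctly identified $S_\alpha(t)$ and $T_\alpha(t)$ with $E_\alpha(-t^{\alpha}A)$ and $tE_{\alpha,2}(-t^{\alpha}A)$, which the paper leaves implicit.
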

In general, we cannot expect to have a representation like the one given in \eqref{repre-o} (see Theorem \ref{bounded}) when we consider the product operator $AE_{\alpha,\delta}(-t^{\alpha}A),$ but the next result gives a partial answer to it. 

\begin{theorem}\label{lem-a1}
Let $A\in \Theta_\omega^\gamma$ with $\omega<\theta<\mu<\pi-\alpha\frac{\pi}{2}$. Then the operator $(zE_{\alpha,\alpha-n}(-t^{\alpha}z))(A)$ is linear and bounded in $X$ for any $n=0,1,2,3,\ldots$ and each fixed $t>0.$ This operator can be represented by 
\begin{equation}\label{potra}
(zE_{\alpha,\alpha-n}(-t^{\alpha}z))(A)=\int_{\Gamma_\theta}zE_{\alpha,\alpha-n}(-t^{\alpha}z)(z-A)^{-1}{\rm d}z,    
\end{equation}
such that  
\[
\|(zE_{\alpha,\alpha-n}(-t^{\alpha}z))(A)\|\leqslant Ct^{-2\alpha-\alpha\gamma}.
\]
Moreover, the above operator is strongly continuous in $[t_0,+\infty),$ for every $t_0>0.$
\end{theorem}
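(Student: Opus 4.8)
The plan is to show that, for each fixed $t>0$, the scalar function $g(z)=zE_{\alpha,\alpha-n}(-t^{\alpha}z)$ belongs to the class $\mathcal{F}_0^{\gamma}(S_\mu^0)$, so that Theorem \ref{thm-main}(1) directly yields that $(zE_{\alpha,\alpha-n}(-t^{\alpha}z))(A)$ is a well-defined bounded linear operator with the absolutely convergent contour representation \eqref{potra}. The decisive point, and the step I expect to be the main obstacle, is securing \emph{enough decay at infinity}: the extra factor $z$ would spoil membership in the class if one used only the linear estimate \eqref{uniform-estimate}, since $|z|\cdot(1+t^{\alpha}|z|)^{-1}$ stays bounded below at infinity. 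Instead, one must exploit the sharper \emph{quadratic} estimate \eqref{uniform-estimate-2}, which is available precisely because the second parameter has the special form $\alpha-n$; this is what forces the restriction to these parameters.

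First I would locate the argument of $-t^{\alpha}z$. Repeating the computation from the proof of Theorem \ref{bounded}, for $z\in S_\mu^0$ one has $|\arg(-t^{\alpha}z)|=\pi-|\arg z|\in(\pi-\mu,\pi]$, and since $\mu<\pi-\alpha\frac{\pi}{2}$ we may fix an auxiliary angle in $(\alpha\pi/2,\pi-\mu]$ that places $-t^{\alpha}z$ inside the region where both \eqref{uniform-estimate} and \eqref{uniform-estimate-2} are valid. Combining the asymptotic estimate \eqref{uniform-estimate-2} for large $|z|$ with the boundedness of the entire function $E_{\alpha,\alpha-n}$ on compact sets, I would upgrade \eqref{uniform-estimate-2} to the uniform bound $|E_{\alpha,\alpha-n}(-t^{\alpha}z)|\leqslant C\big(1+t^{2\alpha}|z|^2\big)^{-1}$ valid for all $z$ in the sector, whence $|g(z)|\leqslant C|z|\big(1+t^{2\alpha}|z|^2\big)^{-1}$.

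With this bound in hand, membership in the class is immediate: testing against $\varphi_0(z)=(1+z)^{-1}$ gives $|g(z)/\varphi_0(z)|=|g(z)|\,|1+z|\leqslant C|z|\,|1+z|\big(1+t^{2\alpha}|z|^2\big)^{-1}$, which tends to $0$ as $z\to0$ and to the constant $Ct^{-2\alpha}$ as $|z|\to\infty$, hence is bounded on $S_\mu^0$; this shows $g\in\Psi_0(S_\mu^0)\subset\mathcal{F}_0^{\gamma}(S_\mu^0)$ and delivers the representation \eqref{potra} together with boundedness via Theorem \ref{thm-main}. For the norm estimate I would insert the resolvent bound \eqref{intro-res}, namely $\|(z-A)^{-1}\|\leqslant C_\mu|z|^{\gamma}$, and reduce to the scalar integral $\int_0^{\infty}r^{1+\gamma}\big(1+t^{2\alpha}r^2\big)^{-1}\,{\rm d}r$ along the two rays of $\Gamma_\theta$; the substitution $u=t^{\alpha}r$ factors out $t^{-\alpha(2+\gamma)}=t^{-2\alpha-\alpha\gamma}$ and leaves $\int_0^{\infty}u^{1+\gamma}(1+u^2)^{-1}\,{\rm d}u$, which converges since $1+\gamma>0$ controls the singularity at $0$ and $\gamma-1<-1$ controls the tail. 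This produces exactly the claimed bound $Ct^{-2\alpha-\alpha\gamma}$.

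Finally, strong continuity on $[t_0,+\infty)$ would follow verbatim from the argument in Theorem \ref{bounded}: writing the difference $(zE_{\alpha,\alpha-n}(-t^{\alpha}z))(A)x-(zE_{\alpha,\alpha-n}(-t_0^{\alpha}z))(A)x$ as a single contour integral and noting that for $t\geqslant t_0$ the integrand is dominated by the $t_0$-version of the integrable majorant $C|z|^{1+\gamma}\big(1+t_0^{2\alpha}|z|^2\big)^{-1}$, the Dominated Convergence Theorem gives $\lim_{t\to t_0^+}(zE_{\alpha,\alpha-n}(-t^{\alpha}z))(A)x=(zE_{\alpha,\alpha-n}(-t_0^{\alpha}z))(A)x$ for every $x\in X$.
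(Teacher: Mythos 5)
Your proposal is correct, and it reaches the conclusion by a slightly different technical route than the paper. Both arguments turn on the same decisive observation: the extra factor $z$ is absorbed by the quadratic decay \eqref{uniform-estimate-2}, which is available precisely because the second parameter is $\alpha-n$ (the would-be leading term $1/\Gamma(-n)$ of the asymptotic expansion vanishes). Where you diverge is in how the functional calculus is invoked. The paper keeps only the asymptotic form of \eqref{uniform-estimate-2}: it notes that $zE_{\alpha,\alpha-n}(-t^{\alpha}z)\in\mathcal{H}^{\infty}(S_\mu^0)$, proves absolute integrability of $z\mapsto zE_{\alpha,\alpha-n}(-t^{\alpha}z)(z-A)^{-1}$ on $\Gamma_\theta$ by a two-regime estimate (linear decay up to a threshold $N_0$, quadratic decay beyond), and then appeals to \cite[Lemma 2.13]{JEE2002} to identify the regularized operator with the contour integral; the norm bound comes from choosing $N_0=t^{-\alpha}$. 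You instead upgrade \eqref{uniform-estimate-2} to a uniform bound $C(1+t^{2\alpha}|z|^2)^{-1}$ on the whole sector (legitimate, by boundedness of the entire function on compacts plus the asymptotics in the fixed angular region $|\arg(-t^{\alpha}z)|\geqslant\pi-\mu>\alpha\pi/2$), verify directly that $zE_{\alpha,\alpha-n}(-t^{\alpha}z)\in\Psi_0(S_\mu^0)\subset\mathcal{F}_0^{\gamma}(S_\mu^0)$, and then apply Theorem \ref{thm-main}(1) without any auxiliary lemma; your single-integral computation $\int_0^{\infty}r^{1+\gamma}(1+t^{2\alpha}r^2)^{-1}{\rm d}r=Ct^{-2\alpha-\alpha\gamma}$ gives the same bound as the paper's split at $N_0=t^{-\alpha}$. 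Your version is marginally more self-contained; the paper's avoids having to make the uniform upgrade explicit. The strong continuity arguments are identical. One cosmetic note: the representation your route produces carries the factor $\frac{1}{2\pi i}$, consistent with \eqref{integral-fc}, whereas the displayed formula \eqref{potra} in the statement omits it; this appears to be a typo in the statement rather than a defect of your argument.
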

\begin{proof}
The function $zE_{\alpha,\alpha-n}(-t^{\alpha}z)\in \mathcal{H}^{\infty}(S_\mu^0)$ due to the estimate \eqref{uniform-estimate}. Also, by the estimate \eqref{uniform-estimate-2}, for any $re^{i\theta}$ with $-\mu<\theta<\mu$ and $r>0,$ we have 
\[
\left|re^{i\theta}E_{\alpha,\alpha-n}(-t^{\alpha}re^{i\theta})\right|\leqslant C\frac{r}{1+t^{2\alpha}r^2}\to  0,\quad\text{as}\quad r\to+\infty.
\]
We now prove that the function $z\to zE_{\alpha,\alpha-n}(-t^{\alpha}z)(z-A)^{-1}$ is absolutely integrable on $\Gamma_\theta$. In fact, let $\omega<\tilde{\mu}<\theta,$ and hence 
\[
\|zE_{\alpha,\alpha-n}(-t^{\alpha}z)(z-A)^{-1}\|\leqslant\left\{
\begin{array}{cl}
C|z|^{1+\gamma}(1+t^{\alpha}|z|)^{-1}&\mbox{if}\quad z\in\Gamma_\theta,\\
C|z|^{1+\gamma}(1+t^{2\alpha}|z|^{2})^{-1}&\mbox{if}\quad z\in\Gamma_\theta\,\,\text{with}\,\, |z|\to+\infty.
\end{array}\right. 
\]
Thus, for some $N_0$ large enough, we get
\begin{align*}
&\left\|\int_{\Gamma_\theta}zE_{\alpha,\alpha-n}(-t^{\alpha}z)(z-A)^{-1}{\rm d}z\right\| \\
&\hspace{1cm}\leqslant C\left(\int_0^{N_0}\frac{r^{1+\gamma}}{1+t^{\alpha}r}{\rm d}r+\int_{N_0}^{+\infty}\frac{r^{1+\gamma}}{1+t^{2\alpha}r^2}{\rm d}r\right)<+\infty,    
\end{align*}
which implies the absolutely integrability of the function on $\Gamma_\theta.$ Therefore, by \cite[Lemma 2.13]{JEE2002}, the operator $(zE_{\alpha,\alpha-n}(-t^{\alpha}z))(A)$ can be represented by the integral formula \eqref{potra} and it defines a bounded linear operator in $X.$ 

Note also that for $N_0=1/t^{\alpha}$ with some $t>0$ sufficient small, we get
\begin{align*}
\int_0^{1/t^{\alpha}}\frac{r^{1+\gamma}}{1+t^{\alpha}r}{\rm d}r+\int_{1/t^{\alpha}}^{+\infty}\frac{r^{1+\gamma}}{1+t^{2\alpha}r^2}{\rm d}r\leqslant C\frac{1}{t^{2\alpha+\alpha\gamma}}.    
\end{align*} 
And the strongly continuity in $[t_0,+\infty)$ follows by the latter estimate and the Dominated Convergence Theorem. 
\end{proof}

\begin{remark}\label{da}
   By Theorems \ref{thm-main} (item 2), \ref{bounded} and \ref{lem-a1}, we see that for any $x\in X$ and each fixed $t>0,$ $E_{\alpha,\alpha-n}(-t^{\alpha}z)(A)x$ belongs to the domain of $A.$ It is also clear that $(zE_{\alpha,\alpha-n}(-t^{\alpha}z))(A)=A(E_{\alpha,\alpha-n}(-t^{\alpha}z))(A).$
\end{remark}

Now we need to establish some elementary results that will help us to analyze the type of solution of our equations. Next, we give an alternative representation of the solution operator $E_\alpha(-t^{\alpha}A).$ The new representation will give the possibility to obtain some specific (special) properties that the representation in Theorem \ref{bounded} could not show at first glance. Before we provide some preliminary lemmas.

\medskip Below, we always consider the following Hankel path for $\theta_0\in\big(\pi/2,\frac{\pi-\theta}{\alpha}\big)$ and $\rho>0,$ where $\theta<\pi-\alpha\frac{\pi}{2}$:
\begin{align*}
\Gamma_{\theta_0}&:=\Gamma_{\theta_0}^1 \cup \Gamma_{\theta_0}^2 \cup \Gamma_{\theta_0}^3 \\
&\,\,=\{re^{-i\theta_0},\rho\leqslant r<+\infty\}\cup\{\rho e^{i\phi},-\theta_0\leqslant\phi<\theta_0\} \cup \{re^{i\theta_0},\rho\leqslant r<+\infty\}.
\end{align*}
Usually, for the next results, we assume $\rho=1/t$ for some $t>0.$

\begin{lemma}\label{lemma1}
Take $A\in \Theta_\omega^\gamma$ with $\omega<\theta<\mu<\pi-\alpha\frac{\pi}{2}$. The next representation is true 
\[
\left(\frac{1}{\lambda^{\alpha}+z}\right)(A)=\frac{1}{2\pi i}\int_{\Gamma_\theta}\frac{1}{\lambda^{\alpha}+z}(z-A)^{-1}{\rm d}z,\quad \lambda\in\Gamma_{\theta_0},  \]  
and defines a bounded linear operator in $X$ such that 
\[
\|(\lambda^{\alpha}+A)^{-1}\|\leqslant C|\lambda|^{\alpha\gamma}, \quad \lambda\in\Gamma_{\theta_0}.
\]
\end{lemma}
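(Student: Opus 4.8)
The plan is to realise $\left(\frac{1}{\lambda^{\alpha}+z}\right)(A)$ through the holomorphic functional calculus of Theorem~\ref{thm-main}, to identify it with the resolvent $(\lambda^{\alpha}+A)^{-1}$, and then to read off the quantitative bound from a direct estimate of the defining contour integral. Fix $\lambda\in\Gamma_{\theta_0}$ and note that $f(z):=(\lambda^{\alpha}+z)^{-1}$ has its only pole at $z=-\lambda^{\alpha}$. Since $|\arg\lambda|\leqslant\theta_0$ on the Hankel path, $|\arg(-\lambda^{\alpha})|=\pi-\alpha|\arg\lambda|\geqslant\pi-\alpha\theta_0$, and the standing hypothesis $\theta_0<\frac{\pi-\theta}{\alpha}$ forces $\pi-\alpha\theta_0>\theta>\omega$. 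Hence $-\lambda^{\alpha}\notin S_\omega\supseteq\sigma(A)$, so $-\lambda^{\alpha}\in\rho(A)$ and $(\lambda^{\alpha}+A)^{-1}$ is a well-defined bounded operator. Choosing $\mu'\in(\theta,\pi-\alpha\theta_0)$ (a nonempty interval, again by $\theta_0<\frac{\pi-\theta}{\alpha}$), the pole lies outside $S_{\mu'}^0$, so $f$ is holomorphic on $S_{\mu'}^0\supset\Gamma_\theta$; moreover $(1+z)f(z)=\frac{1+z}{\lambda^{\alpha}+z}$ is bounded on $S_{\mu'}^0$, whence $f\in\Psi_0(S_{\mu'}^0)\subset\mathcal{F}_0^\gamma(S_{\mu'}^0)$. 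Theorem~\ref{thm-main}(1) then delivers the integral representation and the boundedness, and applying the product formula of Theorem~\ref{thm-main}(2) to the factorisation $f(z)\cdot(\lambda^{\alpha}+z)\equiv1$, together with the consistency identities $1(A)=I$ and $(\lambda^{\alpha}+z)(A)=\lambda^{\alpha}+A$, identifies $f(A)$ with $(\lambda^{\alpha}+A)^{-1}$.

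The quantitative part hinges on an angle-separation inequality on the contour. For $z\in\Gamma_\theta$ one has $\arg z\in\{\pm\theta\}$, while $|\arg\lambda^{\alpha}|\leqslant\alpha\theta_0$, so the angle between $z$ and $\lambda^{\alpha}$ never exceeds $\theta+\alpha\theta_0$, which is strictly below $\pi$ precisely because $\alpha\theta_0<\pi-\theta$. Setting $\varepsilon:=\pi-\theta-\alpha\theta_0>0$, the elementary bound $|a+b|\geqslant\sin(\varepsilon/2)(|a|+|b|)$ valid for $a,b$ subtending an angle $\leqslant\pi-\varepsilon$ yields a constant $c>0$ with
\[
|\lambda^{\alpha}+z|\geqslant c\,(|\lambda|^{\alpha}+|z|),\qquad z\in\Gamma_\theta,\quad\lambda\in\Gamma_{\theta_0}.
\]

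To obtain the norm bound I would feed this into the defining integral. Picking $\tilde\mu\in(\omega,\theta)$ places $\Gamma_\theta$ outside $S_{\tilde\mu}$, so the resolvent estimate \eqref{intro-res} applies and $\|(z-A)^{-1}\|\leqslant C|z|^{\gamma}$ on $\Gamma_\theta$. Parametrising the two rays by $z=re^{\pm i\theta}$ and using the previous inequality,
\[
\|(\lambda^{\alpha}+A)^{-1}\|\leqslant\frac{1}{2\pi}\int_{\Gamma_\theta}\frac{\|(z-A)^{-1}\|}{|\lambda^{\alpha}+z|}\,|{\rm d}z|\leqslant\frac{C}{\pi c}\int_0^{+\infty}\frac{r^{\gamma}}{|\lambda|^{\alpha}+r}\,{\rm d}r.
\]
The substitution $r=|\lambda|^{\alpha}s$ reduces the last integral to $|\lambda|^{\alpha\gamma}\int_0^{+\infty}\frac{s^{\gamma}}{1+s}\,{\rm d}s$, a Beta integral that converges exactly because $-1<\gamma<0$ (its value being $\pi/\sin(\pi(\gamma+1))$). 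This produces $\|(\lambda^{\alpha}+A)^{-1}\|\leqslant C|\lambda|^{\alpha\gamma}$, as claimed.

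The delicate step is not the estimate but the geometric bookkeeping that keeps $-\lambda^{\alpha}$ uniformly off the sector of holomorphy and off $\Gamma_\theta$ as $\lambda$ sweeps the entire Hankel path; this is exactly what the constraint $\theta_0<\frac{\pi-\theta}{\alpha}$ guarantees, and it is what obliges one to work in the subsector $S_{\mu'}^0$ rather than on $S_\mu^0$ itself. A secondary subtlety is the identification $f(A)=(\lambda^{\alpha}+A)^{-1}$: should one wish to bypass the extended-calculus identities, it can be checked directly by composing the integral with $(\lambda^{\alpha}+A)$ on $D(A)$ and invoking $(z-A)^{-1}(\lambda^{\alpha}+A)=(\lambda^{\alpha}+z)(z-A)^{-1}-I$, after which the combination $(z-A)^{-1}-(\lambda^{\alpha}+z)^{-1}I$ is bounded at the vertex and decays like $|z|^{\gamma-1}$ at infinity, so the resulting contour integral is absolutely convergent and can be evaluated by deformation.
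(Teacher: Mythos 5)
Your proposal is correct and follows essentially the same route as the paper: verify that $z\mapsto(\lambda^{\alpha}+z)^{-1}$ lies in $\mathcal{F}_0^{\gamma}$ via an angle-separation lower bound on $|\lambda^{\alpha}+z|$ (your $\sin(\varepsilon/2)$ constant coincides with the paper's $\cos\bigl(\tfrac{\theta\pm\alpha\theta_0}{2}\bigr)$), invoke Theorem \ref{thm-main}, and estimate the contour integral with the resolvent bound and the substitution $r=|\lambda|^{\alpha}s$. You additionally make explicit two points the paper leaves implicit — shrinking the sector to $S_{\mu'}^0$ so the pole $-\lambda^{\alpha}$ stays uniformly outside it, and identifying $f(A)$ with $(\lambda^{\alpha}+A)^{-1}$ through the product formula — both of which are correct and welcome refinements rather than a different method.
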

\begin{proof}
Let $\lambda\in\Gamma_{\theta_0}.$ We begin with $\lambda\in \Gamma_{\theta_0}^1.$ Then
\[
|\lambda^{\alpha}+z|\geqslant\cos\left(\frac{\theta+\alpha\theta_0}{2}\right)(\rho^{\alpha}+|z|)\geqslant C\cos\left(\frac{\theta+\alpha\theta_0}{2}\right)(1+|z|)>0
\]
since $\frac{\theta+\alpha\theta_0}{2}<\frac{\pi}{2}.$ Also, for $\lambda\in\Gamma_{\theta_0}^3$, we have
\[
|\lambda^{\alpha}+z|\geqslant\cos\left(\frac{\theta-\alpha\theta_0}{2}\right)(\rho^{\alpha}+|z|)\geqslant C\cos\left(\frac{\theta-\alpha\theta_0}{2}\right)(1+|z|)>0
\]
due to $-\frac{\pi}{2}<\frac{\theta-\alpha\theta_0}{2}<\frac{(1-\alpha)\pi}{2}.$ 

On the other hand, for $\lambda\in\Gamma_{\theta_0}^2$, in a similar way, we get
\[
|\lambda^{\alpha}+z|\geqslant C(1+|z|).
\]
Thus
\[
\frac{1}{|\lambda^{\alpha}+z|}\leqslant \frac{C}{1+|z|},\quad\text{for each}\quad\lambda\in\Gamma_{\theta_0}.
\]
Hence $\frac{1}{\lambda^{\alpha}+z}\in \mathcal{F}_0^{\gamma}(S_\mu^0)$ for any $\lambda\in\Gamma_{\theta_0},$ and the result follows from Theorem \ref{thm-main}.

Also, for $\omega<\tilde{\mu}<\theta$, we obtain
\begin{align*}
 \int_{\Gamma_\theta}\frac{1}{|\lambda^{\alpha}+z|}&\|(z-A)^{-1}\||{\rm d}z|\leqslant C\int_{\Gamma_\theta}\frac{1}{|\lambda|^{\alpha}+|z|}|z|^{\gamma}|{\rm d}z| \\
    &\leqslant C\int_0^{+\infty}\frac{r^{\gamma}}{|\lambda|^{\alpha}+r}{\rm d}r=C|\lambda|^{\gamma\alpha}\int_0^{+\infty}\frac{s^{\gamma}}{1+s}{\rm d}s<C|\lambda|^{\gamma}.
\end{align*}\end{proof}

Let us now present the alternative representation of the operator $E_{\alpha}(-t^{\alpha}A).$ In the next representation, we take $\rho=1/t$ for $t>0,$ on the path $\Gamma_{\theta_0}.$

\begin{theorem}\label{strong}
Let $A\in \Theta_\omega^\gamma,$ $-1<\gamma<0$ and $\omega<\theta<\mu<\pi-\alpha\frac{\pi}{2}.$ For any $t>0,$ it follows that 
\begin{equation}\label{alternative-r}
E_{\alpha}(-t^{\alpha}z)(A)=\frac{1}{2\pi i}\int_{\Gamma_{\theta_0}}e^{\lambda t}\lambda^{\alpha-1}(\lambda^{\alpha}+A)^{-1}{\rm d}\lambda,
\end{equation}
and it is strongly continuous for any $x\in D(A).$
\end{theorem}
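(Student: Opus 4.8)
The plan is to start from the representation of $E_\alpha(-t^\alpha z)(A)$ furnished by Theorem \ref{bounded} and to replace the scalar factor $E_\alpha(-t^\alpha z)$ by a Hankel-type contour integral; an interchange of the order of integration followed by Lemma \ref{lemma1} will then deliver \eqref{alternative-r}. First I would recall the classical integral representation of the scalar Mittag--Leffler function (see \cite{mittag}),
\[
E_\alpha(w)=\frac{1}{2\pi i}\int_{Ha}\frac{\mu^{\alpha-1}e^{\mu}}{\mu^{\alpha}-w}\,{\rm d}\mu,
\]
valid for $0<\alpha<2$ and $w$ lying to the left of the Hankel contour $Ha$. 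Setting $w=-t^{\alpha}z$ and performing the change of variable $\mu=t\lambda$ rescales the contour so that its circular arc acquires radius $\rho=1/t$; the powers of $t$ then collapse and one obtains
\[
E_\alpha(-t^\alpha z)=\frac{1}{2\pi i}\int_{\Gamma_{\theta_0}}\frac{\lambda^{\alpha-1}e^{\lambda t}}{\lambda^{\alpha}+z}\,{\rm d}\lambda,
\]
with $\Gamma_{\theta_0}$ exactly the Hankel path described before the statement and $\rho=1/t$. This is the identity that produces the exponential $e^{\lambda t}$.

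Next I would insert this expression into the formula of Theorem \ref{bounded}, namely $E_\alpha(-t^\alpha z)(A)=\frac{1}{2\pi i}\int_{\Gamma_\theta}E_\alpha(-t^\alpha z)(z-A)^{-1}\,{\rm d}z$, producing an iterated integral over $\Gamma_\theta\times\Gamma_{\theta_0}$. The crucial step is to justify Fubini's theorem, i.e.\ the absolute integrability of $\frac{\lambda^{\alpha-1}e^{\lambda t}}{\lambda^{\alpha}+z}(z-A)^{-1}$ on $\Gamma_\theta\times\Gamma_{\theta_0}$. For this I would combine three ingredients: the resolvent bound $\|(z-A)^{-1}\|\leqslant C|z|^{\gamma}$ on $\Gamma_\theta$; the lower bound $|\lambda^{\alpha}+z|\geqslant C(1+|z|)$ established inside the proof of Lemma \ref{lemma1}, holding uniformly for $\lambda\in\Gamma_{\theta_0}$ with $\rho=1/t$; and the genuine exponential decay $|e^{\lambda t}|=e^{t\operatorname{Re}\lambda}$ on the two infinite rays of $\Gamma_{\theta_0}$, where $\operatorname{Re}\lambda=r\cos\theta_0<0$ because $\theta_0>\pi/2$. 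Together these yield an integrable majorant, so the order of integration may be exchanged. After the exchange the inner $z$-integral is, by Lemma \ref{lemma1}, exactly
\[
\frac{1}{2\pi i}\int_{\Gamma_\theta}\frac{1}{\lambda^{\alpha}+z}(z-A)^{-1}\,{\rm d}z=(\lambda^{\alpha}+A)^{-1},
\]
and collecting the surviving factors $e^{\lambda t}\lambda^{\alpha-1}$ gives precisely the right-hand side of \eqref{alternative-r}.

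It remains to establish strong continuity for $x\in D(A)$. The subtlety is that $\Gamma_{\theta_0}$ itself depends on $t$ through $\rho=1/t$, so the integrand varies with $t$ both in $e^{\lambda t}$ and through the contour. To handle this I would use the identity $(\lambda^\alpha+A)^{-1}x=\lambda^{-\alpha}\bigl(x-(\lambda^\alpha+A)^{-1}Ax\bigr)$, valid for $x\in D(A)$, to split the representation into two pieces. The first piece reduces to $\frac{1}{2\pi i}\int_{\Gamma_{\theta_0}}\lambda^{-1}e^{\lambda t}\,{\rm d}\lambda\cdot x=x$, which is constant in $t$ by the Hankel representation of $1/\Gamma(1)$; the second piece carries the extra factor $\lambda^{-\alpha}$ together with the bound $\|(\lambda^\alpha+A)^{-1}Ax\|\leqslant C|\lambda|^{\alpha\gamma}\|Ax\|$ from Lemma \ref{lemma1}, producing an integrand of size $|\lambda|^{\alpha\gamma-1}e^{t\operatorname{Re}\lambda}\|Ax\|$ that is dominated uniformly for $t$ in compact subsets of $(0,+\infty)$. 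Continuity in $t$ then follows from the Dominated Convergence Theorem, after a routine deformation argument reconciling the integrals over the $t$-dependent contours. I expect the Fubini justification of the second paragraph to be the main technical obstacle, since it demands the uniform lower bound on $|\lambda^\alpha+z|$ along the entire Hankel path and careful bookkeeping of the exponential decay on the rays against the bounded contribution of the circular arc.
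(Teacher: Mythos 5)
Your derivation of the representation \eqref{alternative-r} is essentially the paper's argument run in the opposite direction: the paper starts from the right-hand side, checks absolute convergence using Lemma \ref{lemma1} and the decay $e^{tr\cos\theta_0}$ on the rays (with $\theta_0>\pi/2$), applies Fubini, and recognizes the inner $\lambda$-integral as the Hankel representation of $E_\alpha(-t^\alpha z)$ (valid because $\pi-\theta>\frac{\pi-\theta}{\alpha}>\theta_0$), whereas you substitute the Hankel formula into the $\Gamma_\theta$-integral and integrate in the other order. The ingredients — the lower bound $|\lambda^\alpha+z|\geqslant C(1+|z|)$, the resolvent estimate $\|(z-A)^{-1}\|\leqslant C|z|^\gamma$, and the exponential decay on the infinite rays — are exactly those the paper uses, so this part is sound and equivalent.

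The one genuine shortfall is in the continuity claim. Strong continuity on $[t_0,+\infty)$ for every $t_0>0$ is already supplied by Theorem \ref{bounded} for all $x\in X$; the entire point of the assertion "strongly continuous for any $x\in D(A)$" is the behaviour at $t=0$, namely $\lim_{t\to0^+}E_\alpha(-t^\alpha A)x=x$, which is what is later needed to verify $w(0)=w_0$ in Theorem \ref{h-cs-we}. Your argument explicitly stops at domination "uniformly for $t$ in compact subsets of $(0,+\infty)$", which never reaches $t=0$ and so only reproves what was already known. The fix is already contained in your own decomposition: the first piece is identically $x$ by the Hankel integral $\frac{1}{2\pi i}\int_{\Gamma_{\theta_0}}\lambda^{-1}e^{\lambda t}\,{\rm d}\lambda=1$, and the second piece, bounded by $C\|Ax\|\int_{\Gamma_{\theta_0}}|\lambda|^{\alpha\gamma-1}e^{t\operatorname{Re}\lambda}|{\rm d}\lambda|$, scales like $C\|Ax\|\,t^{-\alpha\gamma}$ once you substitute $\rho=1/t$ and rescale $s=tr$; since $\gamma<0$ forces $-\alpha\gamma>0$, this tends to $0$ as $t\to0^+$. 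You should state this limit explicitly rather than appeal to dominated convergence away from the origin; this is precisely how the paper concludes.
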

\begin{proof}
We first show that the expression \eqref{alternative-r} converges. Indeed, by Lemma \ref{lemma1}, it yields 
\begin{align*}
    \frac{1}{2\pi}\int_{\Gamma_{\theta_0}}|e^{\lambda t}|&|\lambda^{\alpha-1}|\|(\lambda^{\alpha}+A)^{-1}\||{\rm d}\lambda|\leqslant C\int_{\Gamma_{\theta_0}}|e^{\lambda t}||\lambda|^{\alpha+\alpha\gamma-1}|{\rm d}\lambda| \\
    &\leqslant C\left(\int_{\rho}^{+\infty}e^{tr\cos(\theta_0)}r^{\alpha+\alpha\gamma-1}{\rm d}r+\rho^{\alpha+\alpha\gamma}\int_{-\theta_0}^{\theta_0}e^{t\rho\cos(\phi)}{\rm d}\phi\right) \\
    &\leqslant C\left(\frac{1}{t^{\alpha+\alpha\gamma}}\int_0^{+\infty}s^{\alpha(1+\gamma)-1}e^{-s}{\rm d}s+\rho^{\alpha+\alpha\gamma}\int_{-\theta_0}^{\theta_0}e^{t\rho\cos(\phi)}{\rm d}\phi\right)<+\infty
\end{align*}
for any $t>0.$ So, by Lemma \ref{lemma1} and Fubini's theorem, we have that 
\begin{align*}
\frac{1}{2\pi i}\int_{\Gamma_{\theta_0}}e^{\lambda t}\lambda^{\alpha-1}&(\lambda^{\alpha}+A)^{-1}{\rm d}\lambda \\
&=\frac{1}{2\pi i}\int_{\Gamma_{\theta_0}}e^{\lambda t}\lambda^{\alpha-1}\left(\frac{1}{2\pi i}\int_{\Gamma_\theta}\frac{1}{\lambda^{\alpha}+z}(z-A)^{-1}{\rm d}z\right){\rm d}\lambda \\   
&=\frac{1}{2\pi i}\int_{\Gamma_\theta}\left(\frac{1}{2\pi i}\int_{\Gamma_{\theta_0}}e^{\lambda t}\lambda^{\alpha-1}(\lambda^{\alpha}+z)^{-1}{\rm d}\lambda\right)(z-A)^{-1}{\rm d}z \\
&=\frac{1}{2\pi i}\int_{\Gamma_\theta}E_{\alpha}(-t^{\alpha}z)(z-A)^{-1}{\rm d}z,
\end{align*}
where the last representation is true since \cite[Theorem 1.1]{page 35} and $\pi-\theta>\frac{\pi-\theta}{\alpha}>\theta_0$.

Let us now see that the operator \eqref{alternative-r} is strongly continuous for any $x\in D(A).$ Since $(\lambda^{\alpha}+A)(\lambda^{\alpha}+A)^{-1}=I$ for any $\lambda\in \Gamma_{\theta_0},$ note that
\begin{align*}
E_{\alpha}(-t^{\alpha}z)(A)x-Ix&=\frac{1}{2\pi i}\int_{\Gamma_{\theta_0}}e^{\lambda t}\lambda^{\alpha-1}(\lambda^{\alpha}+A)^{-1}x{\rm d}\lambda-\frac{1}{2\pi i}\int_{\Gamma_{\theta_0}}e^{\lambda t}\lambda^{-1}{\rm d}\lambda Ix \\
&=\frac{1}{2\pi i}\int_{\Gamma_{\theta_0}}e^{\lambda t}\lambda^{-1}\left(\lambda^{\alpha}(\lambda^{\alpha}+A)^{-1}-I\right)x\,{\rm d}\lambda \\
&=-\frac{1}{2\pi i}\int_{\Gamma_{\theta_0}}e^{\lambda t}\lambda^{-1}(\lambda^{\alpha}+A)^{-1}Ax\,{\rm d}\lambda.
\end{align*}
Hence
\begin{align*}
    \|E_{\alpha}(-t^{\alpha}z)(A)x-Ix\|\leqslant C\|Ax\|\left(\int_{\rho}^{+\infty}e^{tr\cos(\theta_0)}r^{\alpha\gamma-1}{\rm d}r+\rho^{\alpha\gamma}\int_{-\theta_0}^{\theta_0}e^{t\rho\cos(\phi)}{\rm d}\phi\right).
\end{align*}
Since $\rho=1/t,$ it follows that
\begin{align*}
    \|E_{\alpha}(-t^{\alpha}z)(A)x-Ix\|\leqslant C\|Ax\|\left(t^{-\alpha\gamma}\int_{-\cos(\theta_0)}^{+\infty}e^{-s}s^{\alpha\gamma-1}{\rm d}s+t^{-\alpha\gamma}\int_{-\theta_0}^{\theta_0}e^{\cos(\phi)}{\rm d}\phi\right).
\end{align*}
It yields
\[ \|E_{\alpha}(-t^{\alpha}z)(A)x-Ix\|\to0,\quad{\text as}\quad t\to0^+.\]
\end{proof}
In the following, we prove some useful results that will help us establish the regularity of the solutions for homogeneous, linear, and semilinear equations in the next subsections. 
\begin{lemma}\label{i} Let $x\in X.$ If $t>0,$ then $E_{\alpha}(-t^{\alpha}A)x\in D(A)$ and
   \[
   \|AE_{\alpha}(-t^{\alpha}A)\|\leqslant C(t^{-\alpha}+t^{-2\alpha-\alpha\gamma}).
   \]
Besides, for any $x\in D(A)$ and $t>0$, we have 
\[
\|AE_{\alpha}(-t^{\alpha}A)x\|\leqslant C\|Ax\|t^{-\alpha(1+\gamma)}.
\]
\end{lemma}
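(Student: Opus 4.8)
The plan is to avoid Remark \ref{da} entirely, since it only yields membership in $D(A)$ for the functions $E_{\alpha,\alpha-n}$ with integer $n$, and for $E_\alpha=E_{\alpha,1}$ there is no integer $n$ with $1=\alpha-n$ when $1<\alpha<2$. Instead I would work from the alternative representation of Theorem \ref{strong},
\[
E_{\alpha}(-t^{\alpha}A)x=\frac{1}{2\pi i}\int_{\Gamma_{\theta_0}}e^{\lambda t}\lambda^{\alpha-1}(\lambda^{\alpha}+A)^{-1}x\,{\rm d}\lambda,\qquad \rho=1/t,
\]
which exhibits $E_\alpha(-t^\alpha A)x$ as a contour integral of elements of $D(A)$, since $(\lambda^\alpha+A)^{-1}x\in D(A)$ for each $\lambda\in\Gamma_{\theta_0}$ by Lemma \ref{lemma1}. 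The governing algebraic fact is the resolvent identity $A(\lambda^{\alpha}+A)^{-1}=I-\lambda^{\alpha}(\lambda^{\alpha}+A)^{-1}$, which together with the bound $\|(\lambda^{\alpha}+A)^{-1}\|\leqslant C|\lambda|^{\alpha\gamma}$ from Lemma \ref{lemma1} gives
\[
\|A(\lambda^{\alpha}+A)^{-1}\|\leqslant 1+C|\lambda|^{\alpha(1+\gamma)},\qquad \lambda\in\Gamma_{\theta_0}.
\]

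For the first assertion I would first verify that $\lambda\mapsto e^{\lambda t}\lambda^{\alpha-1}A(\lambda^{\alpha}+A)^{-1}x$ is absolutely integrable on $\Gamma_{\theta_0}$, using the displayed norm bound, and then invoke the closedness of $A$ to pull it inside the integral, concluding $E_{\alpha}(-t^{\alpha}A)x\in D(A)$ together with
\[
AE_{\alpha}(-t^{\alpha}A)x=\frac{1}{2\pi i}\int_{\Gamma_{\theta_0}}e^{\lambda t}\lambda^{\alpha-1}A(\lambda^{\alpha}+A)^{-1}x\,{\rm d}\lambda .
\]
The norm estimate then splits according to the two terms in the resolvent identity: the factor $1$ contributes an integral of $|e^{\lambda t}||\lambda|^{\alpha-1}$, while the factor $C|\lambda|^{\alpha(1+\gamma)}$ contributes an integral of $|e^{\lambda t}||\lambda|^{2\alpha-1+\alpha\gamma}$. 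On the two rays $\lambda=re^{\pm i\theta_0}$ (with $\cos\theta_0<0$ since $\theta_0>\pi/2$) the substitution $s=tr|\cos\theta_0|$ turns each integral into a scaled incomplete Gamma integral producing the powers $t^{-\alpha}$ and $t^{-2\alpha-\alpha\gamma}$ respectively, and the circular arc $\lambda=\rho e^{i\phi}$ with $\rho=1/t$ gives exactly the same powers. This yields $\|AE_\alpha(-t^\alpha A)\|\leqslant C(t^{-\alpha}+t^{-2\alpha-\alpha\gamma})$, where finiteness of the Gamma integrals needs only $\alpha(1+\gamma)>0$ and $2\alpha+\alpha\gamma>0$, both guaranteed by $-1<\gamma<0$.

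For the second assertion, assuming $x\in D(A)$ I would instead exploit the commutativity $A(\lambda^{\alpha}+A)^{-1}x=(\lambda^{\alpha}+A)^{-1}Ax$, so that
\[
\|AE_{\alpha}(-t^{\alpha}A)x\|\leqslant C\|Ax\|\int_{\Gamma_{\theta_0}}|e^{\lambda t}||\lambda|^{\alpha-1}\|(\lambda^{\alpha}+A)^{-1}\|\,|{\rm d}\lambda|\leqslant C\|Ax\|\int_{\Gamma_{\theta_0}}|e^{\lambda t}||\lambda|^{\alpha-1+\alpha\gamma}\,|{\rm d}\lambda|,
\]
and the same ray-and-arc computation with exponent $\alpha-1+\alpha\gamma$ now produces the single power $t^{-\alpha(1+\gamma)}$, giving the claimed bound. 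The only delicate point I anticipate is the interchange needed in the first part: justifying that $A$ commutes with the contour integral, which rests on the absolute integrability established above and the closedness of $A$; everything else reduces to the routine bookkeeping of splitting $\Gamma_{\theta_0}$ and reducing each piece to a Gamma-type integral via $\rho=1/t$.
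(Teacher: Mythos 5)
Your proposal is correct and follows essentially the same route as the paper: the paper also starts from the alternative representation \eqref{alternative-r}, uses the identity $A(\lambda^{\alpha}+A)^{-1}=I-\lambda^{\alpha}(\lambda^{\alpha}+A)^{-1}$ to split $AE_{\alpha}(-t^{\alpha}A)$ into two contour integrals $K_1+K_2$ bounded by $Ct^{-\alpha}$ and $Ct^{-2\alpha-\alpha\gamma}$ via Lemma \ref{lemma1}, and obtains the second estimate by commuting $A$ with the resolvent for $x\in D(A)$. Your added care in justifying the interchange of $A$ with the integral via closedness and absolute integrability is a detail the paper leaves implicit, but it is not a different method.
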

\begin{proof}
From $(\lambda^{\alpha}+A)(\lambda^{\alpha}+A)^{-1}=I$ for any $\lambda\in \Gamma_{\theta_0},$ and the alternative representation \eqref{alternative-r} of the propagator $E_{\alpha}(-t^{\alpha}A),$ we obtain
\begin{align*}
AE_{\alpha}(-t^{\alpha}A)&=\frac{1}{2\pi i}\int_{\Gamma_{\theta_0}}e^{\lambda t}\lambda^{\alpha-1}A(\lambda^{\alpha}+A)^{-1}{\rm d}\lambda \\
&=\frac{1}{2\pi i}\int_{\Gamma_{\theta_0}}e^{\lambda t}\lambda^{\alpha-1}I{\rm d}\lambda-\frac{1}{2\pi i}\int_{\Gamma_{\theta_0}}e^{\lambda t}\lambda^{\alpha-1}\lambda^{\alpha}(\lambda^{\alpha}+A)^{-1}{\rm d}\lambda \\
&=K_1+K_2.
\end{align*}
One can see that $\|K_1\|\leqslant Ct^{-\alpha};$ while, by Lemma \ref{lemma1}, $\|K_2\|\leqslant Ct^{-2\alpha-\alpha\gamma}.$ Thus, it completes the first proof. The second one follows in a similar way from \eqref{alternative-r} and Lemma \ref{lemma1}.    
\end{proof}

Now we give a remark on the Laplace transform of the operator $E_\alpha(-t^{\alpha}A)$ which will be useful to establish the uniqueness of our solutions in equation \eqref{we}.

\begin{lemma}\label{laplace}
Let $A\in \Theta_\omega^\gamma,$ $-1<\gamma<0$ and $\omega<\theta<\mu<\pi-\alpha\frac{\pi}{2}.$ If $1>\alpha(1+\gamma),$ then the Laplace transform of $S_\alpha(t)$ exists and 
\[
    \widehat{E_\alpha(-t^{\alpha}A)}(\lambda)=\frac{\lambda^{\alpha-1}}{\lambda^{\alpha}I+A},\quad \lambda^{\alpha}\in \rho(-A).
    \]
\end{lemma}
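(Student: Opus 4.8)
The plan is to take the Laplace transform directly on the contour representation \eqref{repre-o} (with $\delta=1$), interchange the order of integration by Fubini, evaluate the resulting scalar Laplace transform of the Mittag-Leffler function, and recognize the remaining contour integral as the resolvent $(\lambda^{\alpha}+A)^{-1}$ via Lemma \ref{lemma1}.

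First I would settle existence. For $\Re(\lambda)>0$, the estimate \eqref{asymp} gives $\|E_\alpha(-t^{\alpha}A)\|\leqslant C_1 t^{-\alpha(1+\gamma)}$, so
\[
\int_0^{+\infty} e^{-\Re(\lambda)t}\|E_\alpha(-t^{\alpha}A)\|\,{\rm d}t \leqslant C_1 \int_0^{+\infty} e^{-\Re(\lambda)t}\, t^{-\alpha(1+\gamma)}\,{\rm d}t.
\]
The singularity at $t=0$ is integrable precisely because $\alpha(1+\gamma)<1$ (the hypothesis), and the factor $e^{-\Re(\lambda)t}$ controls the tail; hence the operator-valued integral defining $\widehat{E_\alpha(-t^{\alpha}A)}(\lambda)$ converges absolutely for every $\lambda$ in the right half-plane. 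Moreover, under the standing assumption $\omega<\mu<\pi-\alpha\frac{\pi}{2}$, if $\Re(\lambda)>0$ then $|\arg(\lambda^{\alpha})|<\alpha\frac{\pi}{2}<\pi-\omega$, so $\lambda^{\alpha}$ avoids the spectrum of $-A$, i.e. $\lambda^{\alpha}\in\rho(-A)$ and $(\lambda^{\alpha}+A)^{-1}$ makes sense.

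Next I would insert \eqref{repre-o} and swap the integrals, writing
\[
\widehat{E_\alpha(-t^{\alpha}A)}(\lambda) = \frac{1}{2\pi i}\int_{\Gamma_\theta}\left(\int_0^{+\infty} e^{-\lambda t}E_\alpha(-t^{\alpha}z)\,{\rm d}t\right)(z-A)^{-1}\,{\rm d}z.
\]
To license Fubini I would bound the double integrand using the estimate $|E_\alpha(-t^{\alpha}z)|\leqslant \frac{C}{1+t^{\alpha}|z|}$ established in the proof of Theorem \ref{bounded} together with $\|(z-A)^{-1}\|\leqslant C|z|^{\gamma}$; parametrizing $z=re^{\pm i\theta}$, the inner contour estimate is $\leqslant C\int_0^{+\infty}\frac{r^{\gamma}}{1+t^{\alpha}r}\,{\rm d}r = C' t^{-\alpha(1+\gamma)}$ (substitution $s=t^{\alpha}r$ and the convergent integral $\int_0^{+\infty}\frac{s^{\gamma}}{1+s}\,{\rm d}s$, using $-1<\gamma<0$), so that $\int_0^{+\infty}e^{-\Re(\lambda)t}t^{-\alpha(1+\gamma)}{\rm d}t<+\infty$ exactly as above; this justifies the interchange. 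Then I would compute the scalar Laplace transform by integrating the series \eqref{bimittag} term by term (valid for $|z|<|\lambda|^{\alpha}$, and extended by analytic continuation in $z$ since both sides are holomorphic away from the pole $z=-\lambda^{\alpha}$, which lies off $\Gamma_\theta$), obtaining the classical identity $\int_0^{+\infty}e^{-\lambda t}E_\alpha(-t^{\alpha}z)\,{\rm d}t = \frac{\lambda^{\alpha-1}}{\lambda^{\alpha}+z}$ (see \cite{mittag}). Factoring out $\lambda^{\alpha-1}$ gives
\[
\widehat{E_\alpha(-t^{\alpha}A)}(\lambda) = \lambda^{\alpha-1}\,\frac{1}{2\pi i}\int_{\Gamma_\theta}\frac{1}{\lambda^{\alpha}+z}(z-A)^{-1}\,{\rm d}z = \lambda^{\alpha-1}(\lambda^{\alpha}+A)^{-1},
\]
where the last equality is Lemma \ref{lemma1}: its proof uses only that $\arg(\lambda^{\alpha}+z)$ stays bounded away from $\pi$, which holds verbatim for $\Re(\lambda)>0$ because $|\arg(\lambda^{\alpha})|<\alpha\frac{\pi}{2}$ while $z\in\Gamma_\theta$ has $|\arg z|=\theta<\pi-\alpha\frac{\pi}{2}$, so $\frac{1}{\lambda^{\alpha}+z}\in\mathcal{F}_0^\gamma(S_\mu^0)$ and the functional calculus returns the resolvent. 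This yields $\widehat{E_\alpha(-t^{\alpha}A)}(\lambda)=\frac{\lambda^{\alpha-1}}{\lambda^{\alpha}I+A}$, as claimed.

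The main obstacle I anticipate is the bookkeeping needed to apply Fubini rigorously and, in tandem, confirming that Lemma \ref{lemma1} (stated along the Hankel path $\Gamma_{\theta_0}$) remains valid for $\lambda$ in the right half-plane; both reduce to the angular-separation estimate and the integrability threshold $\alpha(1+\gamma)<1$, so no genuinely new difficulty should arise beyond careful verification.
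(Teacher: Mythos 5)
Your proposal is correct and follows essentially the same route as the paper's own proof: existence of the transform from the bound $\|E_\alpha(-t^{\alpha}A)\|\leqslant Ct^{-\alpha(1+\gamma)}$ with the integrability threshold $\alpha(1+\gamma)<1$, then Fubini on the contour representation, the scalar identity $\widehat{E_{\alpha}(-t^{\alpha}z)}(\lambda)=\frac{\lambda^{\alpha-1}}{\lambda^{\alpha}+z}$, Lemma \ref{lemma1} to recognize the resolvent, and analytic continuation to all $\lambda^{\alpha}\in\rho(-A)$. Your explicit verification of the angular separation needed for Lemma \ref{lemma1} on the right half-plane is a detail the paper leaves implicit, but it is not a different argument.
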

\begin{proof}
    From the estimate \eqref{asymp}, one has that 
    \begin{align*}
    \int_0^{+\infty}e^{-vt}\|E_\alpha(-t^{\alpha}A)\|{\rm d}t&\leqslant C\int_0^{+\infty}e^{-vt}t^{-\alpha(1+\gamma)}{\rm d}t \\
    &=\frac{C}{v^{1-\alpha(1+\gamma)}}\int_0^{+\infty}e^{-u}u^{-\alpha(1+\gamma)}{\rm d}u<+\infty,
    \end{align*}
    whenever $1>\alpha(1+\gamma)$ for any $v>0.$ Thus, the Laplace transform of $E_\alpha(-t^{\alpha}A)$ exists, i.e. 
    \[
    \widehat{E_\alpha(-t^{\alpha}A)}(\lambda)=\int_0^{+\infty}e^{-\lambda t}E_\alpha(-t^{\alpha}A){\rm d}t,\quad {\rm Re}\,(\lambda) >0.
    \]
    Moreover, from the relation $\widehat{E_{\alpha}(-t^{\alpha}z)}(\lambda)=\frac{\lambda^{\alpha-1}}{\lambda^{\alpha}+z}$ that holds for any $z\in\Gamma_\theta$ and ${\rm Re}\,(\lambda)>0,$ Theorem \ref{bounded}, Lemma \ref{lemma1}, Fubini's theorem, \cite[Corollary B.3]{vv} and the analytic continuation, we have 
    \[
    \widehat{E_\alpha(-t^{\alpha}A)}(\lambda)=\frac{\lambda^{\alpha-1}}{2\pi i}\int_{\Gamma_\theta}\frac{1}{\lambda^{\alpha}+z}(z-A)^{-1}{\rm d}z=\frac{\lambda^{\alpha-1}}{\lambda^{\alpha}I+A},\quad \lambda^{\alpha}\in \rho(-A).
    \]\end{proof}

Below, we establish a result about the time-derivative of the operator $E_\alpha(-t^{\alpha}A).$

\begin{theorem}\label{derivative-s}
  Let $A\in \Theta_\omega^\gamma,$ $-1<\gamma<0$ and $\omega<\theta<\mu<\pi-\alpha\frac{\pi}{2}.$ For each $x\in X,$ we have
    \[
\partial_t E_\alpha(-t^{\alpha}A)x=-A(g_{\alpha-1}(s)\ast E_\alpha(-s^{\alpha}A))(t)x.
    \]
    Also, for each $x\in D(A)$ and $1>\alpha(1+\gamma),$
\[
    \partial_t E_\alpha(-t^{\alpha}A)x=-(g_{\alpha-1}(s)\ast AE_\alpha(-s^{\alpha}A))(t)x.
    \]
Moreover, if $\alpha>1/(-\gamma),$ that is $\frac{1}{1+\gamma}>\alpha>\frac{1}{-\gamma}$ $(-1<\gamma<-1/2)$, then \[
\lim_{t\to+0}\partial_t E_\alpha(-t^{\alpha}A)x=0,\quad\text{for all}\quad x\in D(A).
\]
\end{theorem}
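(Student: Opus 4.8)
The plan is to reduce everything to scalar Mittag-Leffler identities and then transport them through the functional calculus. First I would record two elementary recurrences: the identity $E_{\alpha,0}(w)=wE_{\alpha,\alpha}(w)$ (the case $\beta=0$ of $E_{\alpha,\beta}(w)=\tfrac{1}{\Gamma(\beta)}+wE_{\alpha,\alpha+\beta}(w)$, using $1/\Gamma(0)=0$) and the convolution identity $(g_{\alpha-1}(s)\ast E_\alpha(-s^{\alpha}z))(t)=t^{\alpha-1}E_{\alpha,\alpha}(-t^{\alpha}z)$, which follows by termwise integration and a Beta-function computation. Applying Theorem \ref{bounded} with $\delta=1$ and $n=1$ gives $\partial_t E_\alpha(-t^{\alpha}A)=t^{-1}E_{\alpha,0}(-t^{\alpha}A)$; substituting the scalar relation $t^{-1}E_{\alpha,0}(-t^{\alpha}z)=-zt^{\alpha-1}E_{\alpha,\alpha}(-t^{\alpha}z)$ under the contour integral turns the right-hand side into $-t^{\alpha-1}(zE_{\alpha,\alpha}(-t^{\alpha}z))(A)$. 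By Theorem \ref{lem-a1} and Remark \ref{da} (with $n=0$) this operator equals $-t^{\alpha-1}AE_{\alpha,\alpha}(-t^{\alpha}A)=-A\big(t^{\alpha-1}E_{\alpha,\alpha}(-t^{\alpha}A)\big)$, and note that $E_{\alpha,\alpha}(-t^{\alpha}A)x\in D(A)$ for every $x\in X$, so the outer $A$ is legitimate.

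Next I would identify $t^{\alpha-1}E_{\alpha,\alpha}(-t^{\alpha}A)$ with the convolution $(g_{\alpha-1}(s)\ast E_\alpha(-s^{\alpha}A))(t)$. Writing $E_\alpha(-s^{\alpha}A)$ by its defining integral over $\Gamma_\theta$ and using Fubini's theorem to swap the time-convolution with the contour integral, the scalar identity from the first step shows that the inner $s$-integral produces exactly $t^{\alpha-1}E_{\alpha,\alpha}(-t^{\alpha}z)$, and re-assembling the contour integral recovers the operator $t^{\alpha-1}E_{\alpha,\alpha}(-t^{\alpha}A)$. This establishes the first formula $\partial_t E_\alpha(-t^{\alpha}A)x=-A(g_{\alpha-1}(s)\ast E_\alpha(-s^{\alpha}A))(t)x$ for every $x\in X$.

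For the second formula I would use that $A$ is closed together with Hille's theorem on interchanging a closed operator with a Bochner integral. For $x\in D(A)$, Lemma \ref{i} gives $E_\alpha(-s^{\alpha}A)x\in D(A)$ with $\|AE_\alpha(-s^{\alpha}A)x\|\leqslant C\|Ax\|s^{-\alpha(1+\gamma)}$; the kernel $g_{\alpha-1}(t-s)=(t-s)^{\alpha-2}/\Gamma(\alpha-1)$ is integrable near $s=t$ because $\alpha>1$, while the singularity $s^{-\alpha(1+\gamma)}$ at $s=0$ is integrable precisely when $1>\alpha(1+\gamma)$. Under this hypothesis the map $s\mapsto g_{\alpha-1}(t-s)AE_\alpha(-s^{\alpha}A)x$ is Bochner integrable, so $A$ may be pulled inside the convolution, giving $\partial_t E_\alpha(-t^{\alpha}A)x=-(g_{\alpha-1}(s)\ast AE_\alpha(-s^{\alpha}A))(t)x$.

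Finally, the boundary limit follows by estimating this convolution. Using $\|AE_\alpha(-s^{\alpha}A)x\|\leqslant C\|Ax\|s^{-\alpha(1+\gamma)}$ and the Beta-function evaluation $\int_0^t(t-s)^{\alpha-2}s^{-\alpha(1+\gamma)}\,{\rm d}s=B(\alpha-1,1-\alpha(1+\gamma))\,t^{-1-\alpha\gamma}$ (whose arguments are positive exactly when $\alpha>1$ and $1>\alpha(1+\gamma)$), I obtain $\|\partial_t E_\alpha(-t^{\alpha}A)x\|\leqslant C\|Ax\|t^{-1-\alpha\gamma}$. Since $\gamma<0$, the exponent $-1-\alpha\gamma$ is positive exactly when $\alpha>1/(-\gamma)$, so under the stated range $\tfrac{1}{1+\gamma}>\alpha>\tfrac{1}{-\gamma}$ the right-hand side tends to $0$ as $t\to0^+$, yielding the limit. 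The main technical obstacle is the bookkeeping around the two interchanges: justifying Fubini for the contour-versus-time swap in the first part and Hille's theorem for the unbounded $A$ in the second. Both hinge on the singularity exponents remaining integrable, i.e. on the quantitative bounds of Theorem \ref{bounded} and Lemma \ref{i} together with the hypothesis $1>\alpha(1+\gamma)$.
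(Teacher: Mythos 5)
Your proposal is correct and follows essentially the same route as the paper: the scalar identities $\partial_t E_\alpha(-t^{\alpha}z)=-zt^{\alpha-1}E_{\alpha,\alpha}(-t^{\alpha}z)$ and $t^{\alpha-1}E_{\alpha,\alpha}(-t^{\alpha}z)=(g_{\alpha-1}\ast E_\alpha(-s^{\alpha}z))(t)$ transported through Theorems \ref{bounded} and \ref{lem-a1} for the first formula, the closed-graph/Bochner-integral interchange (the paper cites \cite[Prop.~1.1.7]{vv}, you cite Hille's theorem — the same tool) with Lemma \ref{i} for the second, and the Beta-function bound $C\|Ax\|t^{-1-\alpha\gamma}$ for the limit. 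You supply somewhat more detail on the Fubini and integrability bookkeeping than the paper does, but the argument is the same.
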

\begin{proof}
    Note that by Theorems \ref{bounded} and \ref{lem-a1}, and the identities 
    \[
    \partial_t E_\alpha(-t^{\alpha}z)=-zt^{\alpha-1}E_{\alpha,\alpha}(-t^{\alpha}z),\quad t^{\alpha-1}E_{\alpha,\alpha}(-t^{\alpha}z)=\big(g_{\alpha-1}(s)\ast E_{\alpha}(-s^{\alpha}z)\big)(t),
    \]
for $z\in\mathbb{C}$ and $t>0,$ we obtain that 
    \[
 \partial_t E_\alpha(-t^{\alpha}A)=-A(g_{\alpha-1}(s)\ast E_\alpha(-s^{\alpha}A))(t).
    \]
    Now, by using \cite[Prop. 1.1.7]{vv} and Lemma \ref{i}, we have 
    \[
 \partial_t E_\alpha(-t^{\alpha}A)=(g_{\alpha-1}(s)\ast AE_\alpha(-s^{\alpha}A))(t),
    \]
    and for each $x\in D(A)$
 \begin{align*}
    \|\partial_t E_\alpha(-t^{\alpha}A)x\|&\leqslant C\|Ax\|\int_0^t (t-s)^{\alpha-2}s^{-\alpha(1+\gamma)}{\rm d}s \\
    &\leqslant C\|Ax\|t^{-1-\alpha\gamma}\to 0,\quad \text{if}\quad -\alpha\gamma>1.
    \end{align*}
    \end{proof}
    
\begin{remark}\label{conmu}
Another equivalent representation for the time derivative of $E_{\alpha}(-t^{\alpha}A)$ is given by $-t^{\alpha-1}(AE_{\alpha,\alpha}(-t^{\alpha}A))$. This result is followed by Theorem \ref{bounded} and the equality $\partial_t E_\alpha(-t^{\alpha}z)=-zt^{\alpha-1}E_{\alpha,\alpha}(-t^{\alpha}z).$
\end{remark}

The following result directly shows the classical solution of the problem \eqref{we}. This is the main result of this section.

\begin{theorem}\label{h-cs-we}
Let $t>0,$ $1<\alpha<2$ with $\frac{1}{1+\gamma}>\alpha>\frac{1}{-\gamma},$ $(-1<\gamma<-1/2).$ Let $\mu$ be such that $\mu<\pi-\alpha\frac{\pi}{2}.$ If $w_0,w_1\in D(A),$ then the classical solution of \eqref{we} is given by 
\[
w(t)=E_{\alpha}(-t^{\alpha}z)(A)w_0+tE_{\alpha,2}(-t^{\alpha}z)(A)w_1.
\]
\end{theorem}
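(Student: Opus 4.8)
The plan is to check that the candidate $w(t)=E_{\alpha}(-t^{\alpha}z)(A)w_0+tE_{\alpha,2}(-t^{\alpha}z)(A)w_1$ meets every requirement of Definition \ref{cshc} and then to argue uniqueness. I would organize the work around the three structural demands of that definition — the regularity statement $w\in C([0,T];X)$ together with $w(t)\in D(A)$ for $t>0$, the two initial conditions, and the fractional equation itself — exploiting at every stage that the two summands are precisely the functional-calculus images of the scalar fundamental solutions $E_{\alpha}(-t^{\alpha}z)$ and $tE_{\alpha,2}(-t^{\alpha}z)$ of the corresponding scalar Caputo problem.

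For the regularity, Theorem \ref{bounded} and the estimate \eqref{asymp} give that both $E_{\alpha}(-t^{\alpha}A)$ and $tE_{\alpha,2}(-t^{\alpha}A)$ are bounded, strongly continuous on $[t_0,+\infty)$, with $\|E_{\alpha}(-t^{\alpha}A)\|\leqslant C_1 t^{-\alpha(1+\gamma)}$ and $\|tE_{\alpha,2}(-t^{\alpha}A)\|\leqslant C_2 t^{1-\alpha(1+\gamma)}$. Continuity up to $t=0$ then follows from Theorem \ref{strong} (which yields $E_{\alpha}(-t^{\alpha}A)w_0\to w_0$ as $t\to0^+$ for $w_0\in D(A)$) for the first term, and from the bound $t^{1-\alpha(1+\gamma)}\to0$ for the second, the exponent being positive exactly because $\alpha<\frac{1}{1+\gamma}$; hence $w\in C([0,T];X)$. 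To place $w(t)$ in $D(A)$ for $t>0$, I would invoke Lemma \ref{i} for the first term, and for the second use Remark \ref{rerewrite} to write $tE_{\alpha,2}(-t^{\alpha}A)w_1=\int_0^t E_{\alpha}(-s^{\alpha}A)w_1\,{\rm d}s$, then apply closedness of $A$ together with the integrable bound $\|AE_{\alpha}(-s^{\alpha}A)w_1\|\leqslant C\|Aw_1\|s^{-\alpha(1+\gamma)}$ of Lemma \ref{i}, integrable since $\alpha(1+\gamma)<1$.

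The initial value $w(0)=w_0$ is immediate from the two limits above. For $\partial_t w(0)=w_1$, I would differentiate each summand via the differentiation formula of Theorem \ref{bounded}: with $\delta=2,\,n=1$ it gives $\partial_t\big(tE_{\alpha,2}(-t^{\alpha}A)\big)=E_{\alpha}(-t^{\alpha}A)$, so $\partial_t\big(tE_{\alpha,2}(-t^{\alpha}A)w_1\big)\to w_1$ by Theorem \ref{strong}, while $\partial_t E_{\alpha}(-t^{\alpha}A)w_0\to0$ by the last assertion of Theorem \ref{derivative-s}. It is precisely this last limit that consumes the lower bound $\alpha>\frac{1}{-\gamma}$ (i.e. $-\alpha\gamma>1$), so the two-sided constraint $\frac{1}{1+\gamma}>\alpha>\frac{1}{-\gamma}$ enters the proof naturally and for complementary reasons at the two endpoints.

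For the equation and uniqueness I would pass to the Laplace/Volterra formulation rather than differentiate the contour integral by hand. Using the scalar relation $\widehat{E_{\alpha}(-t^{\alpha}z)}(\lambda)=\lambda^{\alpha-1}/(\lambda^{\alpha}+z)$, Lemma \ref{laplace} and Remark \ref{rerewrite}, the transform of the candidate is $\lambda^{\alpha-1}(\lambda^{\alpha}+A)^{-1}w_0+\lambda^{\alpha-2}(\lambda^{\alpha}+A)^{-1}w_1$; transforming \eqref{we} (the Caputo derivative contributing the data $\lambda^{\alpha-1}w_0+\lambda^{\alpha-2}w_1$) and using the resolvent of Lemma \ref{laplace} produces the same expression, so by injectivity of the Laplace transform the candidate solves \eqref{we} and is its unique classical solution; the existence of all these transforms is guaranteed by $1>\alpha(1+\gamma)$. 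Finally, $^{C}\partial_t^{\alpha}w\in C((0,T];X)$ holds because $^{C}\partial_t^{\alpha}w$ agrees with $t\mapsto Aw(t)$, which is strongly continuous on $(0,T]$ by Lemma \ref{i} and Theorem \ref{lem-a1}. The main obstacle I anticipate is the rigorous justification of this equation step: interchanging the nonlocal operator $^{C}\partial_t^{\alpha}$ with the functional-calculus integral. The direct route requires differentiating $t\mapsto E_{\alpha}(-t^{\alpha}A)$ twice (Theorem \ref{bounded} gives $\partial_t^{2}E_{\alpha}(-t^{\alpha}A)=t^{-2}E_{\alpha,-1}(-t^{\alpha}A)$) and then applying $\prescript{RL}{0}I^{2-\alpha}$ while taming the integrable singularity at $t=0$; the Laplace route instead trades this for verifying legitimacy of inversion for a possibly non-densely defined $A$, which I would discharge using Lemma \ref{laplace} and \cite[Corollary B.3]{vv}.
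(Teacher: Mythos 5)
Your treatment of the regularity, the membership $w(t)\in D(A)$, and both initial conditions coincides with the paper's: same appeal to Theorem \ref{bounded}, Theorem \ref{strong}, Lemma \ref{i}, Remark \ref{rerewrite}, and Theorem \ref{derivative-s}, with the two halves of the constraint $\frac{1}{1+\gamma}>\alpha>\frac{1}{-\gamma}$ consumed exactly where the paper consumes them. The genuine divergence is in how the equation $\prescript{C}{}\partial_{t}^{\alpha}w=Aw$ is verified. The paper does this directly inside the functional calculus: it uses the scalar identity $\prescript{C}{}\partial_{t}^{\alpha}E_{\alpha}(-t^{\alpha}z)=-zE_{\alpha}(-t^{\alpha}z)$, the regularizer $\psi_2^1$ and the product formula of Theorem \ref{thm-main} to identify $\big(zE_{\alpha}(-t^{\alpha}z)\big)(A)$ with $A E_{\alpha}(-t^{\alpha}A)$, and then interchanges $\prescript{C}{}\partial_{t}^{\alpha}$ with the contour integral via the Leibniz rule and Fubini; the Laplace transform appears only for uniqueness. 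You instead propose to get both existence-of-the-equation and uniqueness from the Laplace transform.

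That substitution leaves a gap you only partially acknowledge. Injectivity of the Laplace transform tells you that the candidate agrees with any classical solution (uniqueness), but it does not by itself produce the statement ``$\prescript{C}{}\partial_t^{\alpha}w$ exists and equals $Aw(t)$ for each $t\in(0,T]$'': to transform the equation you must already know that $\prescript{C}{}\partial_t^{\alpha}w$ exists, is Laplace transformable, and has transform $\lambda^{\alpha}\widehat{w}-\lambda^{\alpha-1}w_0-\lambda^{\alpha-2}w_1$, which presupposes the very regularity you are trying to establish. The repair is available but must be made explicit: show (by comparing transforms of \emph{continuous} functions, using Lemma \ref{laplace} and the bound $\|Aw(t)\|\leqslant C t^{-\alpha(1+\gamma)}$ from Lemma \ref{i} to ensure $Aw\in L^1$ near $0$) that $w(t)=w_0+tw_1+(g_{\alpha}\ast Aw)(t)$ pointwise, and then apply the composition rule $\prescript{C}{}\partial_t^{\alpha}\,\prescript{RL}{0}I^{\alpha}v=v$ for $v\in C((0,T];X)\cap L^1$, as the paper itself does in the proof of Theorem \ref{thm-linear-we}. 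Alternatively, fall back on the paper's direct functional-calculus computation, which is precisely the machinery (Theorem \ref{thm-main}, item 2, and \cite[Def.\ 2.9]{JEE2002}) designed to legitimize $\big(zE_{\alpha}(-t^{\alpha}z)\big)(A)=AE_{\alpha}(-t^{\alpha}A)$ for a non-densely defined $A$. Without one of these two completions, the step ``by injectivity of the Laplace transform the candidate solves \eqref{we}'' is not yet a proof.
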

\begin{proof}
We first check that $^{C}\partial_{t}^{\alpha}w(t)=Aw(t).$ We know that 
\[
^{C}\partial_{t}^{\alpha}\big(E_\alpha(-t^{\alpha}z)\big)=-zE_{
\alpha}(-t^{\alpha}z),\quad z\in\mathbb{C},\quad t>0,\quad 1<\alpha<2.
\]
It is clear that $z\in \mathcal{F}(S_\mu^0)$ and $E_{\alpha}(-t^{\alpha}z)\in \mathcal{F}_0^{\gamma}(S_\mu^0),$ hence $zE_{\alpha}(-t^{\alpha}z)\psi_2^1(z)\in \mathcal{F}_0^{\gamma}(S_\mu^0).$ By \cite[Def. 2.9]{JEE2002} and Theorem \ref{thm-main} (item 2) we can define the closed linear operator $\big[zE_{\alpha}(-t^{\alpha}z)\big](A)$ as follows 
$\big([\psi_2^1(z)](A)\big)^{-1}(zE_{\alpha}(-t^{\alpha}z)\psi_2^1(z))(A)$ where $(zE_{\alpha}(-t^{\alpha}z)\psi_2^1(z))(A)$ is a bounded linear operator in $X$ represented by the integral \eqref{integral-fc} and $\big([\psi_2^1(z)](A)\big)^{-1}$ is a closed linear operator in $X.$ Since $E_{\alpha}(-t^{\alpha}z)(A)$ is a bounded linear operator (by Theorem \ref{bounded}) we have that $\big[zE_{\alpha}(-t^{\alpha}z)\big](A)=A\big[E_{\alpha}(-t^{\alpha}z)\big](A)$ (Theorem \ref{thm-main} (item 2)). Thus
\begin{align*}
\left(\,^{C}\partial_{t}^{\alpha}E_\alpha(-t^{\alpha}z)(A)\right)=-\big(zE_\alpha(-t^{\alpha}z)\big)(A)=-A\big(E_\alpha(-t^{\alpha}z)\big)(A).
\end{align*}
Also, by \cite[Remark 2.6]{JEE2002}, the Leibniz integral rule, Fubini's theorem and Theorem \ref{thm-main} (item 2), we have for all $x\in X$ that
\begin{align*}
-AE_\alpha(-t^{\alpha}A)x&=\left(\,^{C}\partial_{t}^{\alpha}E_\alpha(-t^{\alpha}A)\right)x \\
&=-(zE_{\alpha}(-t^{\alpha}z)\psi_2^1(z))(A)\big([\psi_2^1(z)](A)\big)^{-1}x \\
&=(^{C}\partial_{t}^{\alpha}E_\alpha(-t^{\alpha}z)\psi_2^1(z))(A)\big([\psi_2^1(z)](A)\big)^{-1}x \\
&=\,^{C}\partial_{t}^{\alpha}\big(E_\alpha(-t^{\alpha}z)\psi_2^1(z))(A)\big)\big([\psi_2^1(z)](A)\big)^{-1}x \\
&=\,^{C}\partial_{t}^{\alpha}\big(E_\alpha(-t^{\alpha}A)\big)x.
\end{align*}
Since Theorems \ref{bounded} and \ref{strong}, it follows that $E_{\alpha}(-t^{\alpha}A)w_0\in C([0,T];X).$ Also, from the equality above, that is, $^{C}\partial_{t}^{\alpha}\left(E_\alpha(-t^{\alpha}z)(A)\right)x=-A\big(E_\alpha(-t^{\alpha}z)\big)(A)x$ for any $x\in X,$ and by Lemma \ref{i}, we have that $^{C}\partial_{t}^{\alpha}E_\alpha(-t^{\alpha}z)(A)x\in C\big((0,T];X\big).$ Moreover, by the same latter lemma, $E_{\alpha}(-t^{\alpha}A)x\in D(A)$ for all $t\in(0,T]$ and $x\in X.$   

On the other hand, by doing a similar reasoning as above, and from the equality 
\[
^{C}\partial_{t}^{\alpha}\big(tE_{\alpha,2}(-t^{\alpha}z)\big)=-ztE_{
\alpha,2}(-t^{\alpha}z),\quad z\in\mathbb{C},\quad t>0,\quad 1<\alpha<2,
\]
we obtain
\begin{align*}
\big(\prescript{C}{}\partial_{t}^{\alpha}[tE_{\alpha,2}(-t^{\alpha}z)]\big)(A)=-tA\big(E_{\alpha,2}(-t^{\alpha}z)\big)(A).
\end{align*}
From the estimate \eqref{u-esti} and the Dominated Convergence Theorem, we see that $tE_{\alpha,2}(-t^{\alpha}z)(A)x$ is continuous in $(0,T]$ for any $t>0$ and $x\in X.$ And, again by \eqref{u-esti} and the condition $1>\alpha(1+\gamma),$ we obtain the continuity of the propagator $tE_{\alpha,2}(-t^{\alpha}z)(A)x$ in $t=0,$ so, $tE_{\alpha,2}(-t^{\alpha}z)(A)x$ belongs to $C([0,T];X)$.   


We know that 
$\big(\prescript{C}{}\partial_{t}^{\alpha}[tE_{\alpha,2}(-t^{\alpha}z)]\big)(A)x=-tA\big(E_{\alpha,2}(-t^{\alpha}z)\big)(A)x,$ hence by Remark \ref{rerewrite}, Lemma \ref{i} and \cite[Lemma 1.1.7]{vv}, it follows that 
\begin{align*}
\|tA\big(E_{\alpha,2}(-t^{\alpha}z)\big)(A)w_1\|&=\|A\big(1\ast E_{\alpha}(s^{\alpha}A)(t)w_1\big)\| \\
&\leqslant \int_0^t \|AE_{\alpha}(-s^{\alpha}A)w_1\|{\rm d}s\leqslant C\|Aw_1\|\int_0^t s^{-\alpha(1+\gamma)}{\rm d}s \\
&\leqslant C\|Aw_1\|t^{1-\alpha(1+\gamma)}<+\infty. 
\end{align*}
So, $\big(\prescript{C}{}\partial_{t}^{\alpha}[tE_{\alpha,2}(-t^{\alpha}z)]\big)(A)w_1$ is bounded for any $t>0,$ and thus this function is in $C((0,T];X).$ It is also clear that $tE_{\alpha,2}(-t^{\alpha}z)(A)w_1\in D(A)$ for all $t\in(0,T].$ The uniqueness of the solution follows by the Laplace transform, Lemma \ref{laplace} and the uniqueness of the inverse Laplace transform. Note that by Theorem \ref{strong}, the estimate \eqref{u-esti} and $1>\alpha(1+\gamma),$ we get that $w(0)=w_1,$ while $w^{\prime}(t)=E_{\alpha}^{\prime}(-t^{\alpha}A)w_0+E_{\alpha}(-t^{\alpha}A)w_1$ implies that $w^{\prime}(0)=w_1$ due to Theorem \ref{derivative-s} and $\frac{1}{1+\gamma}>\alpha>\frac{1}{-\gamma}$. Finally, we have proven that $w(t)=E_{\alpha}(-t^{\alpha}z)(A)w_0+tE_{\alpha,2}(-t^{\alpha}z)(A)w_1$ is a classical solution of \eqref{we}, which completes the proof.
\end{proof}

\begin{remark}\label{remark-main-thm} Let us highlight the following aspects:
\begin{enumerate}
    \item In Theorem \ref{h-cs-we}, the first issue to mention is that if $w_0\in X$ and $w_1=0$ then we lost the continuity of the classical solution at $t=0,$ i.e. $E_{\alpha}(-t^{\alpha}z)(A)w_0\in C((0,T];X).$
    \item It is important to note that condition $1>\alpha(1+\gamma)$ in Theorem \ref{h-cs-we} is sufficient to guarantee the continuity of the propagator $tE_{\alpha,2}(-t^{\alpha}A)x$ at $t=0$ for any $x\in X.$ Moreover, by using both representations of $tE_{\alpha,2}(-t^{\alpha}A),$ that is, $tE_{\alpha,2}(-t^{\alpha}A)=(1\ast E_{\alpha}(-s^{\alpha}A))(t),$ such a restriction appeared. Nevertheless, it is not known whether this condition is necessary for it. This is an open question at this stage. With the current approach and methods, we do not see how this can be revealed. At this moment, to establish the classical solutions of \eqref{we} such a condition cannot be avoided. In the case $0<\alpha<1,$ this condition did not appear, but the initial data must be in the domain of $A,$ i.e. $u_0\in D(A)$, see \cite[Theorems 3.4 and 4.1]{section3}.
    \item Note that if $w_1=0$ in Theorem \ref{h-cs-we}, the additional condition $\alpha>\frac{1}{-\gamma}$ is not necessary to guarantee a classical solution of \eqref{we}.
\end{enumerate}    
\end{remark}

\subsection{$A^\beta$-Abstract Volterra equations of wave type} In this subsection, we consider an abstract wave equation of Volterra type by means of powers of the operator $A.$ In this setting, it is known that if $A\in\Theta_\omega^\gamma$ and $1+\gamma<\beta<\pi/\omega$ then $A^{\beta}\in\Theta_{\beta\omega}^{-1+\frac{\gamma+1}{\beta}}$ \cite[Prop. 3.6]{JEE2002}.

So, we study the following $A^{\beta}$-wave type equation:
\begin{align}\label{we-power}
^{C}\partial_{t}^{\alpha}w(t)-A^{\beta}w(t)&=0,\quad 0<t\leqslant T,\quad 1<\alpha<2,\nonumber \\
w(t)|_{_{_{t=0}}}&=w_0, \\
\partial_t w(t)|_{_{_{t=0}}}&=w_1,\nonumber
\end{align}
where $X$ is a complex Banach space, $A\in \Theta_\omega^\gamma$ with $(1+\gamma)\omega<\beta\omega<\theta<\mu<\pi-\alpha\frac{\pi}{2}$ and $w_0,w_1\in X.$ From Theorems \ref{bounded} and \ref{h-cs-we}, it is clear that the solution of equation \eqref{we-power} is related with the following operators:
\[
E_\alpha(-t^{\alpha}z)(A^{\beta})=\frac{1}{2\pi i}\int_{\Gamma_\theta}E_{\alpha}(-t^{\alpha}z)(z-A^{\beta})^{-1}{\rm d}z,\quad t>0,
\]
and
\[
tE_{\alpha,2}(-t^{\alpha}z)(A^{\beta})=\frac{t}{2\pi i}\int_{\Gamma_\theta}E_{\alpha,2}(-t^{\alpha}z)(z-A^{\beta})^{-1}{\rm d}z,\quad t>0,
\]
where $A\in \Theta_\omega^\gamma,$ $-1<\gamma<0$ and $(1+\gamma)\omega<\beta\omega<\theta<\mu<\pi-\alpha\frac{\pi}{2}.$ And, we also have:
\begin{corollary}
Let $A\in \Theta_\omega^\gamma,$ $-1<\gamma<0$ and $(1+\gamma)\omega<\beta\omega<\theta<\mu<\pi-\alpha\frac{\pi}{2}.$ For each fixed $t>0$, $E_\alpha(-t^{\alpha}A^{\beta})$ and $tE_{\alpha,2}(-t^{\alpha}A^{\beta})$ are $n\,(\text{for any}\,\,n\in\mathbb{N})$ times continuously differentiable and bounded linear operators on $X.$ Also, there exist positive constants $C_{1,2}(\alpha,\gamma)$ such that
\begin{equation}\label{asymp-}
\|E_\alpha(-t^{\alpha}A^{\beta})\|\leqslant C_1(\alpha,\gamma)t^{-\alpha(\gamma+1)/\beta},\quad \|tE_{\alpha,2}(-t^{\alpha}A^{\beta})\|\leqslant C_2(\alpha,\gamma)t^{-\frac{\alpha(1+\gamma)}{\beta}+1}.
    \end{equation}
\end{corollary}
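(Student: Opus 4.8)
The plan is to reduce the corollary entirely to Theorem \ref{bounded} by treating $A^\beta$ as a new almost sectorial operator. Recall from \cite[Prop. 3.6]{JEE2002}, quoted at the start of this subsection, that whenever $A\in\Theta_\omega^\gamma$ with $1+\gamma<\beta<\pi/\omega$, the power satisfies $A^{\beta}\in\Theta_{\beta\omega}^{\gamma'}$ with $\gamma'=-1+\frac{\gamma+1}{\beta}$. First I would confirm that the hypotheses of that proposition are met under the standing assumption $(1+\gamma)\omega<\beta\omega<\theta<\mu<\pi-\alpha\frac{\pi}{2}$: dividing the leftmost inequality by $\omega>0$ gives $1+\gamma<\beta$, while the chain $\beta\omega<\mu<\pi$ yields $\beta<\pi/\omega$. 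Hence $A^{\beta}\in\Theta_{\beta\omega}^{\gamma'}$ is legitimate.

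Next I would check that $A^\beta$ fulfils exactly the parameter requirements of Theorem \ref{bounded}, with $\omega$ replaced by $\beta\omega$ and $\gamma$ by $\gamma'$. The growth exponent lies strictly inside the admissible range: since $0<\gamma+1<\beta$, the ratio $\frac{\gamma+1}{\beta}\in(0,1)$, so that $-1<\gamma'<0$. The angular condition transfers verbatim, because the new sector half-angle is $\beta\omega$ and the standing hypothesis already provides $\beta\omega<\theta<\mu<\pi-\alpha\frac{\pi}{2}$. This is precisely what Theorem \ref{bounded} demands of an operator in $\Theta_{\beta\omega}^{\gamma'}$, so the theorem applies to $A^\beta$ with no further work.

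With these verifications in place, the estimates follow by substitution. Taking $\delta=1$ in Theorem \ref{bounded} shows that $E_\alpha(-t^{\alpha}A^{\beta})=E_{\alpha,1}(-t^{\alpha}z)(A^{\beta})$ is a bounded linear operator on $X$ satisfying $\|E_\alpha(-t^{\alpha}A^{\beta})\|\leqslant C\,t^{-\alpha(1+\gamma')}$, and since $t^{\delta-1}=1$ it is also $n$ times continuously differentiable on $(0,+\infty)$. Substituting $1+\gamma'=\frac{\gamma+1}{\beta}$ converts this into the first bound in \eqref{asymp-}. Taking $\delta=2$ gives $\|E_{\alpha,2}(-t^{\alpha}A^{\beta})\|\leqslant C\,t^{-\alpha(1+\gamma')}$, and multiplying by $t$ (noting that $t^{\delta-1}=t$ here yields the differentiability of $tE_{\alpha,2}(-t^{\alpha}A^{\beta})$) produces $\|tE_{\alpha,2}(-t^{\alpha}A^{\beta})\|\leqslant C\,t^{1-\alpha(1+\gamma')}=C\,t^{1-\frac{\alpha(1+\gamma)}{\beta}}$, which is the second bound.

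The only genuinely delicate point is the bookkeeping of the parameters: verifying that $\gamma'$ remains strictly in $(-1,0)$ and that the transformed angular constraint $\beta\omega<\mu<\pi-\alpha\frac{\pi}{2}$ is exactly what the standing hypothesis supplies, so that $E_{\alpha,\delta}(-t^{\alpha}z)$ still lies in $\mathcal{F}_0^{\gamma'}(S_\mu^0)$ along $\Gamma_\theta$ as required in the proof of Theorem \ref{bounded}. Once the substitution $A\mapsto A^{\beta}$, $\gamma\mapsto\gamma'$, $\omega\mapsto\beta\omega$ is justified, the corollary is an immediate specialization of Theorem \ref{bounded} and introduces no new estimates.
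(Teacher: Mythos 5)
Your proposal is correct and follows exactly the route the paper intends: the corollary is stated as an immediate consequence of Theorem \ref{bounded} applied to $A^{\beta}\in\Theta_{\beta\omega}^{-1+\frac{\gamma+1}{\beta}}$ (via \cite[Prop. 3.6]{JEE2002}), and your parameter bookkeeping --- checking $1+\gamma<\beta<\pi/\omega$, that $\gamma'=-1+\frac{\gamma+1}{\beta}\in(-1,0)$, and that the angular chain $\beta\omega<\theta<\mu<\pi-\alpha\frac{\pi}{2}$ is exactly the hypothesis Theorem \ref{bounded} requires --- is precisely the verification the paper leaves implicit. The specializations $\delta=1$ and $\delta=2$ then give the two bounds in \eqref{asymp-} as you state.
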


\begin{corollary}
Let $t$ be any positive real number, $1<\alpha<2$ with $\frac{\beta}{1+\gamma}>\alpha>\frac{\beta}{\beta-(1+\gamma)}.$ Let $\mu$ be such that $\beta\omega<\theta<\mu<\pi-\alpha\frac{\pi}{2}.$ If $w_0,w_1\in D(A),$ then the classical solution of \eqref{we-power} is given by
\[
w(t)=E_{\alpha}(-t^{\alpha}A^\beta)w_0+tE_{\alpha,2}(-t^{\alpha}A^\beta)w_1.
\]    
\end{corollary}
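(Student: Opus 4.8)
The plan is to obtain this corollary as a direct specialization of Theorem \ref{h-cs-we} applied to the operator $B:=A^{\beta}$. The point of departure is the fact recalled at the start of this subsection: since $A\in\Theta_\omega^\gamma$ and $1+\gamma<\beta<\pi/\omega$, one has $A^{\beta}\in\Theta_{\beta\omega}^{\gamma'}$ with $\gamma'=-1+\frac{1+\gamma}{\beta}$ by \cite[Prop. 3.6]{JEE2002}. Thus $B$ is itself an almost sectorial operator of exactly the type to which Theorem \ref{h-cs-we} applies, with base angle $\beta\omega$ in place of $\omega$ and exponent $\gamma'$ in place of $\gamma$, and equation \eqref{we-power} is literally equation \eqref{we} written for $B$. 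Correspondingly, the propagators $E_\alpha(-t^{\alpha}A^{\beta})$ and $tE_{\alpha,2}(-t^{\alpha}A^{\beta})$ are the objects produced by Theorem \ref{bounded} for $B$, with the decay estimates \eqref{asymp-} already in hand.

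First I would carry out the parameter bookkeeping to verify that the hypotheses of Theorem \ref{h-cs-we} for $B$ translate into the hypotheses stated here. A short computation gives $1+\gamma'=\frac{1+\gamma}{\beta}$ and $-\gamma'=\frac{\beta-(1+\gamma)}{\beta}$, whence
\[
\frac{1}{1+\gamma'}=\frac{\beta}{1+\gamma},\qquad \frac{1}{-\gamma'}=\frac{\beta}{\beta-(1+\gamma)}.
\]
Consequently the requirement $\frac{1}{1+\gamma'}>\alpha>\frac{1}{-\gamma'}$ of Theorem \ref{h-cs-we} is exactly the stated condition $\frac{\beta}{1+\gamma}>\alpha>\frac{\beta}{\beta-(1+\gamma)}$. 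As for the admissible range of the exponent, $\gamma'>-1$ holds automatically because $1+\gamma>0$ and $\beta>0$, while $\gamma'<0$ is equivalent to the standing assumption $\beta>1+\gamma$; the non-emptiness of the $\alpha$-interval then forces $\gamma'<-1/2$, i.e. $\beta>2(1+\gamma)$, so that $\gamma'\in(-1,-1/2)$ as demanded. Finally, the sector condition $\beta\omega<\theta<\mu<\pi-\alpha\frac{\pi}{2}$ is precisely the hypothesis on $\mu$ stated here, so the geometric constraints match as well.

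With the parameters aligned, the remaining task is the data condition, namely that $w_0,w_1$ lie in the domain of $B=A^{\beta}$ so that Theorem \ref{h-cs-we} applies verbatim. This is the step requiring the most care, and I expect it to be the main obstacle: Theorem \ref{h-cs-we} is stated for initial data in the domain of the operator driving the equation, here $D(A^{\beta})$, whereas the hypothesis is phrased through $D(A)$. For $\beta\leqslant1$ the inclusion $D(A)\subseteq D(A^{\beta})$ holds and the hypothesis is more than sufficient; for $\beta>1$ one reads the condition as $w_0,w_1\in D(A^{\beta})$ (equivalently, one uses that the relevant propagators and their Caputo derivatives are controlled through the resolvent $(z-A^{\beta})^{-1}$ exactly as in the proof of Theorem \ref{h-cs-we}, with $\gamma$ replaced by $\gamma'$). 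Once this is settled, the estimates \eqref{asymp-} supply the continuity at $t=0$ needed in the classical-solution argument.

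I would then conclude by invoking Theorem \ref{h-cs-we} for $B=A^{\beta}$: its statement yields that
\[
w(t)=E_{\alpha}(-t^{\alpha}A^{\beta})w_0+tE_{\alpha,2}(-t^{\alpha}A^{\beta})w_1
\]
is the unique classical solution of \eqref{we-power}, which is the desired conclusion. The work is therefore not analytic but organizational: checking that every inequality and sector constraint transports correctly under the substitution $\gamma\mapsto\gamma'=-1+\frac{1+\gamma}{\beta}$ and $\omega\mapsto\beta\omega$, and reconciling the domain condition $D(A)$ versus $D(A^{\beta})$ as indicated above.
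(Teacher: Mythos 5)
Your proposal is correct and follows exactly the route the paper intends: the corollary is a direct specialization of Theorem \ref{h-cs-we} to $B=A^{\beta}\in\Theta_{\beta\omega}^{\gamma'}$ with $\gamma'=-1+\frac{1+\gamma}{\beta}$, and your bookkeeping $\frac{1}{1+\gamma'}=\frac{\beta}{1+\gamma}$, $\frac{1}{-\gamma'}=\frac{\beta}{\beta-(1+\gamma)}$ matches the stated hypotheses. Your observation that the data condition should really be read as $w_0,w_1\in D(A^{\beta})$ (automatic from $D(A)$ only when $\beta\leqslant1$) is a fair and worthwhile precision that the paper glosses over.
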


\section{Linear abstract Volterra equations of wave type}

In this section, we consider the following abstract wave equation of Volterra type:
\begin{align}\label{we-f}
\prescript{C}{}\partial_{t}^{\alpha}w(t)-Aw(t)&=f(t),\quad 0<t\leqslant T,\quad 1<\alpha<2,\nonumber \\
w(t)|_{_{_{t=0}}}&=w_0, \\
\partial_t w(t)|_{_{_{t=0}}}&=w_1,\nonumber
\end{align}
where $X$ is a complex Banach space, $A\in \Theta_\omega^\gamma,$ $-1<\gamma<0,$ $\omega<\theta<\mu<\pi-\alpha\frac{\pi}{2}$, $w_0,w_1\in X$ and $f\in L^1\big((0,T);X\big).$

\medskip We first prove some preliminary results. 

\begin{lemma}\label{777}
    Let $A\in \Theta_\omega^\gamma,$ $-1<\gamma<0$ and $\omega<\theta<\mu<\pi-\alpha\frac{\pi}{2}.$ For $x\in X$ and $t>0,$ we have that $\big(g_{\alpha-1}(s)\ast E_{\alpha}(-s^{\alpha}A)\big)(t)x\in D(A)$ and 
    \[
    \|A\big(g_{\alpha-1}(s)\ast E_{\alpha}(-s^{\alpha}A)\big)(t)\|\leqslant Ct^{-1-\alpha(1+\gamma)}.
    \]
    Now, if $x\in D(A)$ and $1>\alpha(1+\gamma),$ it follows that $\big(g_{\alpha-1}(s)\ast E_{\alpha}(-s^{\alpha}A)\big)(t)x\in D(A)$ and 
    \[
    \|A\big(g_{\alpha-1}(s)\ast E_{\alpha}(-s^{\alpha}A)\big)(t)x\|\leqslant C\|Ax\|t^{-1-\alpha\gamma}.
    \]
\end{lemma}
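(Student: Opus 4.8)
The plan is to reduce both parts to results already established, using the key identity from Theorem~\ref{derivative-s} (equivalently Remark~\ref{conmu}),
\[
\big(g_{\alpha-1}(s)\ast E_{\alpha}(-s^{\alpha}A)\big)(t)=t^{\alpha-1}E_{\alpha,\alpha}(-t^{\alpha}A),
\]
which recasts the convolution as a single Mittag-Leffler--type operator. For the first assertion (with $x\in X$), I would invoke Remark~\ref{da} with $n=0$: it already guarantees $E_{\alpha,\alpha}(-t^{\alpha}A)x\in D(A)$ for every $x\in X$, so the membership $\big(g_{\alpha-1}(s)\ast E_{\alpha}(-s^{\alpha}A)\big)(t)x\in D(A)$ is immediate. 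The same remark gives $A\,E_{\alpha,\alpha}(-t^{\alpha}A)=(zE_{\alpha,\alpha}(-t^{\alpha}z))(A)$, whose operator norm is bounded by $Ct^{-2\alpha-\alpha\gamma}$ by Theorem~\ref{lem-a1} (again $n=0$). Multiplying by the factor $t^{\alpha-1}$ then yields
\[
\big\|A\big(g_{\alpha-1}(s)\ast E_{\alpha}(-s^{\alpha}A)\big)(t)\big\|\leqslant t^{\alpha-1}\cdot Ct^{-2\alpha-\alpha\gamma}=Ct^{-1-\alpha(1+\gamma)},
\]
which is exactly the claimed estimate.

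For the second assertion (with $x\in D(A)$ and $1>\alpha(1+\gamma)$), the strategy is instead to commute $A$ through the convolution and use the sharper $D(A)$-estimate from Lemma~\ref{i}. Since $A$ is closed, one can pass $A$ inside the Laplace convolution --- the justification being the same closedness/Bochner-integral argument used in the proof of Theorem~\ref{derivative-s} via \cite[Prop.~1.1.7]{vv} --- to obtain
\[
A\big(g_{\alpha-1}(s)\ast E_{\alpha}(-s^{\alpha}A)\big)(t)x=\big(g_{\alpha-1}(s)\ast A E_{\alpha}(-s^{\alpha}A)x\big)(t).
\]
Applying the second estimate of Lemma~\ref{i}, namely $\|AE_{\alpha}(-s^{\alpha}A)x\|\leqslant C\|Ax\|s^{-\alpha(1+\gamma)}$, and writing out the convolution gives
\[
\big\|A\big(g_{\alpha-1}(s)\ast E_{\alpha}(-s^{\alpha}A)\big)(t)x\big\|\leqslant \frac{C\|Ax\|}{\Gamma(\alpha-1)}\int_0^t (t-s)^{\alpha-2}s^{-\alpha(1+\gamma)}\,{\rm d}s.
\]

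The integral on the right is a Beta-type integral: after the substitution $s=t\sigma$ it equals $t^{\alpha-1-\alpha(1+\gamma)}B\big(1-\alpha(1+\gamma),\,\alpha-1\big)$, which converges precisely because $\alpha>1$ makes the exponent $\alpha-1$ positive and the hypothesis $1>\alpha(1+\gamma)$ makes $1-\alpha(1+\gamma)$ positive. Since $\alpha-1-\alpha(1+\gamma)=-1-\alpha\gamma$, the bound becomes $C\|Ax\|t^{-1-\alpha\gamma}$, as required. I expect the only genuine point requiring care to be the interchange step --- justifying that $A$ may be moved under the convolution integral --- since that rests on the closedness of $A$ together with the integrability of $s\mapsto AE_{\alpha}(-s^{\alpha}A)x$ on $(0,t]$ (which itself follows from the very estimate of Lemma~\ref{i} combined with $1>\alpha(1+\gamma)$); the remaining computations are routine Beta-function bookkeeping.
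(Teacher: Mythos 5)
Your proposal is correct and follows essentially the same route as the paper: both reduce the convolution to $t^{\alpha-1}E_{\alpha,\alpha}(-t^{\alpha}A)$ via the scalar identity and the functional-calculus representation, obtain the first estimate from Theorem~\ref{lem-a1} (with the factor $t^{\alpha-1}$), and prove the second by commuting the closed operator $A$ through the convolution via \cite[Prop.~1.1.7]{vv} and Lemma~\ref{i}, finishing with the same Beta-integral computation under $\alpha>1$ and $1>\alpha(1+\gamma)$. The only cosmetic difference is that you cite Remark~\ref{conmu}/Remark~\ref{da} for the operator identity rather than rederiving it from the contour integral as the paper does.
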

\begin{proof}
By Theorem \ref{bounded} and \cite[Formula (1.100)]{page 35} we get
\begin{align*}
t^{\alpha-1}E_{\alpha,\alpha}(-t^{\alpha}z)(A)&=\frac{1}{2\pi i}\int_{\Gamma_\theta}\big(t^{\alpha-1}E_{\alpha,\alpha}(-t^{\alpha}z)\big)(z-A)^{-1}{\rm d}z \\
&=\frac{1}{2\pi i}\int_{\Gamma_\theta}\big(g_{\alpha-1}(s)\ast E_{\alpha}(-s^{\alpha}z)\big)(t)(z-A)^{-1}{\rm d}z \\
&=\big(g_{\alpha-1}(s)\ast E_{\alpha}(-s^{\alpha}A)\big)(t).
\end{align*}
Also, by Theorem \ref{lem-a1}, we obtain 
\[
\|A\big(g_{\alpha-1}(s)\ast E_{\alpha}(-s^{\alpha}A)\big)(t)\|=\|t^{\alpha-1}AE_{\alpha,\alpha}(-t^{\alpha}z)(A)\|\leqslant Ct^{-1-\alpha(1+\gamma)}.
\]
On the other hand, if $x\in D(A)$ and $1>\alpha(1+\gamma),$ from Lemma \ref{i} and \cite[Prop. 1.1.7]{vv}, we get 
\begin{align*}
\|A\big(g_{\alpha-1}(s)\ast E_{\alpha}(-s^{\alpha}A)\big)(t)x\|&\leqslant C\|Ax\|\big(g_{\alpha-1}(s)\ast s^{-\alpha(1+\gamma)}\big)(t) \\
&\leqslant C\|Ax\|t^{-\alpha\gamma-1}\int_0^1 (1-s)^{\alpha-2}s^{-\alpha(1+\gamma)}{\rm d}s,
\end{align*}
which is bounded whenever $1-\alpha(1+\gamma)>0$ and $t>0.$ 
\end{proof}

Let us now prove the following assertion.
\begin{lemma}\label{uno}
For each $x\in D(A),$ $t>0$ and $1>\alpha(1+\gamma),$ we have that $\big(g_{\alpha}(s)\ast E_{\alpha}(-s^{\alpha}A)\big)(t)x\in D(A)$ and 
\[ 
A\big(g_{\alpha}(s)\ast E_{\alpha}(-s^{\alpha}A)\big)(t)x=Ix-E_{\alpha}(-t^{\alpha}A)x.
\]
Moreover, in this case, we also get
\begin{equation}\label{nnn}
A\big(g_{\alpha}(s)\ast E_{\alpha}(-s^{\alpha}A)\big)(t)x=\big(g_{\alpha}(s)\ast AE_{\alpha}(-s^{\alpha}A)\big)(t)x,
\end{equation}
such that 
\[
\|A\big(g_{\alpha}(s)\ast E_{\alpha}(-s^{\alpha}A)\big)(t)x\|\leqslant C\|Ax\|t^{-\alpha\gamma}.
\]
\end{lemma}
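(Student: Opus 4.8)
The plan is to reduce everything to the single scalar identity
\[
z\,t^{\alpha}E_{\alpha,\alpha+1}(-t^{\alpha}z)=1-E_{\alpha}(-t^{\alpha}z),\qquad z\in\mathbb{C},\ t>0,
\]
and then lift it to the operator level through the functional calculus of Theorem \ref{thm-main}. First I would record two elementary facts about Mittag-Leffler functions. On one hand, exactly as in the opening computation of Lemma \ref{777} (via \cite[Formula (1.100)]{page 35}, equivalently the Laplace transforms $\widehat{g_{\alpha}}(\lambda)=\lambda^{-\alpha}$ and $\widehat{E_{\alpha}(-s^{\alpha}z)}(\lambda)=\lambda^{\alpha-1}(\lambda^{\alpha}+z)^{-1}$), one gets $\big(g_{\alpha}(s)\ast E_{\alpha}(-s^{\alpha}z)\big)(t)=t^{\alpha}E_{\alpha,\alpha+1}(-t^{\alpha}z)$. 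On the other hand, the three-term recurrence $E_{\alpha,1}(w)=1+w\,E_{\alpha,\alpha+1}(w)$, an immediate consequence of the series \eqref{bimittag}, applied to $w=-t^{\alpha}z$ yields precisely the displayed scalar identity.

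Next I would pass to operators. Since $|t^{\alpha}E_{\alpha,\alpha+1}(-t^{\alpha}z)|\leqslant C_t(1+|z|)^{-1}$ on $S_\mu^0$ by \eqref{uniform-estimate} (the argument being the one used at the start of the proof of Theorem \ref{bounded}), the function $h_t(z):=t^{\alpha}E_{\alpha,\alpha+1}(-t^{\alpha}z)$ lies in $\mathcal{F}_0^{\gamma}(S_\mu^0)$, and interchanging the Laplace convolution with the contour integral (Fubini, exactly as in Lemma \ref{777}) gives $\big(g_{\alpha}(s)\ast E_{\alpha}(-s^{\alpha}A)\big)(t)=h_t(A)=t^{\alpha}E_{\alpha,\alpha+1}(-t^{\alpha}A)$, a bounded operator by Theorem \ref{bounded}. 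Now I apply the product formula: taking $f(z)=z\in\mathcal{F}(S_\mu^0)$ (so that $f(A)=A$, as already used in the proof of Theorem \ref{h-cs-we}) and $g=h_t\in\mathcal{F}_0^{\gamma}(S_\mu^0)$ with $g(A)$ bounded, Theorem \ref{thm-main}(2) yields $A\,h_t(A)=(z\,h_t)(A)$. By the scalar identity $z\,h_t(z)=1-E_{\alpha}(-t^{\alpha}z)$, and by linearity of the calculus together with the fact that the constant function $1$ maps to $I$, the right-hand side equals $I-E_{\alpha}(-t^{\alpha}A)$. This simultaneously shows $\big(g_{\alpha}(s)\ast E_{\alpha}(-s^{\alpha}A)\big)(t)x\in D(A)$ and the first claimed identity (in fact for every $x\in X$).

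It remains to establish the commutation \eqref{nnn} and the norm bound, and this is where the hypothesis $1>\alpha(1+\gamma)$ enters. For $x\in D(A)$ one has $A E_{\alpha}(-s^{\alpha}A)x=E_{\alpha}(-s^{\alpha}A)Ax$ with $\|A E_{\alpha}(-s^{\alpha}A)x\|\leqslant C\|Ax\|s^{-\alpha(1+\gamma)}$ by Lemma \ref{i}; since $-\alpha(1+\gamma)>-1$, the map $s\mapsto A E_{\alpha}(-s^{\alpha}A)x$ is integrable near $0$, so $g_{\alpha}(t-\cdot)A E_{\alpha}(-(\cdot)^{\alpha}A)x$ is Bochner integrable on $(0,t)$. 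Closedness of $A$ together with \cite[Prop. 1.1.7]{vv} then lets me pull $A$ inside the convolution, which is exactly \eqref{nnn}. Finally, inserting Lemma \ref{i} into \eqref{nnn} and evaluating a Beta integral gives
\[
\big\|A\big(g_{\alpha}(s)\ast E_{\alpha}(-s^{\alpha}A)\big)(t)x\big\|\leqslant \frac{C\|Ax\|}{\Gamma(\alpha)}\int_0^t (t-s)^{\alpha-1}s^{-\alpha(1+\gamma)}{\rm d}s=C\|Ax\|\,B\big(\alpha,1-\alpha(1+\gamma)\big)\,t^{-\alpha\gamma},
\]
the Beta function being finite precisely because $1-\alpha(1+\gamma)>0$; note the resulting exponent $\alpha-\alpha(1+\gamma)=-\alpha\gamma$ matches the claim.

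I expect the delicate point to be the first identity rather than the estimate. The function $z\,h_t(z)=1-E_{\alpha}(-t^{\alpha}z)$ is bounded but does \emph{not} decay as $|z|\to+\infty$, so it belongs to $\mathcal{H}^{\infty}(S_\mu^0)\setminus\mathcal{F}_0^{\gamma}(S_\mu^0)$ and the Dunford-type integral \eqref{integral-fc} for $(z\,h_t)(A)$ is not absolutely convergent; moreover, splitting $A(z-A)^{-1}=z(z-A)^{-1}-I$ under the integral produces two individually divergent pieces whose divergences only cancel in combination. This is exactly the situation the regularized product formula of Theorem \ref{thm-main}(2) is designed to handle, so the argument must be routed through it (and through $1(A)=I$) rather than through direct manipulation of the contour integral.
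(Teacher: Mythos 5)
Your proof is correct, and for the central identity it takes a genuinely different route from the paper. The paper obtains $A\big(g_{\alpha}\ast E_{\alpha}(-s^{\alpha}A)\big)(t)x=x-E_{\alpha}(-t^{\alpha}A)x$ by computing Laplace transforms: using Lemma \ref{laplace} it writes $A\widehat{g_{\alpha}}(s)\widehat{E_{\alpha}(-t^{\alpha}A)}(s)x=s^{-1}A(s^{\alpha}I+A)^{-1}x=\widehat{1}(s)x-\widehat{E_{\alpha}(-t^{\alpha}A)}(s)x$ and then inverts. You instead lift the scalar recurrence $E_{\alpha}(w)=1+wE_{\alpha,\alpha+1}(w)$ through the regularized product formula of Theorem \ref{thm-main}(2), using that $h_t(z)=t^{\alpha}E_{\alpha,\alpha+1}(-t^{\alpha}z)\in\mathcal{F}_0^{\gamma}(S_\mu^0)$ so that $Ah_t(A)=(zh_t)(A)=I-E_{\alpha}(-t^{\alpha}A)$. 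Your version buys two things: it is pointwise in $t$ and valid for every $x\in X$ (not only $x\in D(A)$), and it does not invoke the hypothesis $1>\alpha(1+\gamma)$, which the paper's argument needs already at the stage of Lemma \ref{laplace} (existence of the Laplace transform) and again for uniqueness of the inversion. The paper's version is shorter and reuses machinery already set up for the uniqueness arguments elsewhere. Your closing observation about why $zh_t\notin\mathcal{F}_0^{\gamma}(S_\mu^0)$ forces the detour through the regularized calculus is accurate and is precisely the point that makes the naive splitting $A(z-A)^{-1}=z(z-A)^{-1}-I$ under the contour integral illegitimate. For the remaining claims (\eqref{nnn} and the bound $C\|Ax\|t^{-\alpha\gamma}$) your argument coincides with the paper's: Lemma \ref{i}, Bochner integrability from $-\alpha(1+\gamma)>-1$, closedness of $A$ via \cite[Prop. 1.1.7]{vv}, and the Beta integral.
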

\begin{proof}
Since $A$ is a closed operator, Lemmas \ref{i} and \ref{laplace}, and $(s^{\alpha}I+A)(s^{\alpha}I+A)^{-1}=I$ for $s^{\alpha}\in\rho(-A)$, we have
\begin{align*}
A\widehat{g_{\alpha}}(s)\widehat{E_{\alpha}(-t^{\alpha}A)}(s)x&=A\frac{1}{s^{\alpha}}\frac{s^{\alpha-1}}{s^{\alpha}I+A}x \\
&=\frac{1}{s}\frac{A}{s^{\alpha}I+A}x=\frac{1}{s}Ix-\frac{s^{\alpha-1}}{s^{\alpha}I+A}x=\widehat{1}(s)Ix-\widehat{E_{\alpha}(-t^{\alpha}A)}(s)x.
\end{align*}
So, the result follows by taking the inverse Laplace transform and Lemma \ref{laplace}.

Note now that by Lemma \ref{i} and the condition $1>\alpha(1+\gamma)$ we obtain
\begin{align*}
\|\big(g_{\alpha}(s)\ast AE_{\alpha}(-s^{\alpha}A)\big)(t)x\|&\leqslant C\|Ax\|\int_0^t (t-s)^{\alpha-1}s^{-\alpha(1+\gamma)}{\rm d}s\\
&\leqslant C\|Ax\|t^{-\alpha\gamma}\int_0^1 (1-r)^{\alpha-1}r^{-\alpha(1+\gamma)}{\rm d}r<+\infty.
\end{align*}
Hence, by \cite[Prop. 1.1.7]{vv}, the relation \eqref{nnn} holds.
\end{proof}
Next, we present the main results of this section.
\begin{theorem}\label{thm-linear-we}
Let $A\in \Theta_\omega^\gamma$ with $-1<\gamma<0$ and $\omega<\theta<\mu<\pi-\alpha\frac{\pi}{2}.$ Assume that $f(t)\in D(A)$ for any $t\in(0,T]$, $f\in L^1\big((0,T);D(A)\big)$ and let $f$ be H\"older continuous with an exponent $\nu\in(0,1]$ such that $\nu>\alpha(1+\gamma).$ Then 
\[
w(t)=(g_{\alpha-1}(s)\ast E_{\alpha}(-s^{\alpha}A)\ast f(s))(t)
\]
is the unique classical solution of \eqref{we-f} with $w_0=w_1=0.$
\end{theorem}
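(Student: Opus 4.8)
The plan is to verify each requirement of Definition \ref{cshc} for $w_0=w_1=0$ and then prove uniqueness by a Laplace argument. The first step is to recast $w$ in a tractable form: by Lemma \ref{777} one has $g_{\alpha-1}(s)\ast E_\alpha(-s^\alpha A)=t^{\alpha-1}E_{\alpha,\alpha}(-t^\alpha A)$, so that
\[
w(t)=\int_0^t (t-s)^{\alpha-1}E_{\alpha,\alpha}\big(-(t-s)^\alpha A\big)f(s)\,{\rm d}s.
\]
Taking $\delta=\alpha$ in Theorem \ref{bounded} gives $\|t^{\alpha-1}E_{\alpha,\alpha}(-t^\alpha A)\|\leqslant C t^{-1-\alpha\gamma}$, and since $\gamma<0$ the exponent exceeds $-1$, so the kernel is locally integrable. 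Hence $w$ is a well-defined Bochner convolution of an $L^1$ kernel with $f\in L^1$, it belongs to $C([0,T];X)$, and $\|w(t)\|\leqslant C\int_0^t(t-s)^{-1-\alpha\gamma}\|f(s)\|\,{\rm d}s\to 0$ as $t\to0^+$, which yields $w(0)=0=w_0$.

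The central step is to show $w(t)\in D(A)$ for $t\in(0,T]$ together with the continuity of $Aw$ on $(0,T]$; this is where the H\"older hypothesis enters. I would split $f(s)=\big(f(s)-f(t)\big)+f(t)$, decomposing $w(t)=I_1(t)+I_2(t)$ with
\[
I_1(t)=\int_0^t (t-s)^{\alpha-1}E_{\alpha,\alpha}\big(-(t-s)^\alpha A\big)\big(f(s)-f(t)\big)\,{\rm d}s
\]
and $I_2(t)=\big(\int_0^t (t-s)^{\alpha-1}E_{\alpha,\alpha}(-(t-s)^\alpha A)\,{\rm d}s\big)f(t)$. Applying $A$ under the integral sign in $I_1$ and invoking the bound $\|A\,(g_{\alpha-1}\ast E_\alpha)(\tau)\|\leqslant C\tau^{-1-\alpha(1+\gamma)}$ from Lemma \ref{777} together with $\|f(s)-f(t)\|\leqslant\kappa|t-s|^\nu$, the integrand is dominated by $C(t-s)^{-1-\alpha(1+\gamma)+\nu}$; this is integrable near $s=t$ exactly because $\nu>\alpha(1+\gamma)$, and the closedness of $A$ then puts $I_1(t)$ in $D(A)$ with the integral representation of $AI_1(t)$. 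For $I_2$, the change of variables $\tau=t-s$ identifies the inner operator integral with $(g_\alpha\ast E_\alpha(-s^\alpha A))(t)$ (using $g_1\ast g_{\alpha-1}=g_\alpha$), so Lemma \ref{uno} applies to $f(t)\in D(A)$ and gives $AI_2(t)=f(t)-E_\alpha(-t^\alpha A)f(t)$. Summing, $w(t)\in D(A)$ and
\[
Aw(t)=\int_0^t A(t-s)^{\alpha-1}E_{\alpha,\alpha}\big(-(t-s)^\alpha A\big)\big(f(s)-f(t)\big)\,{\rm d}s+f(t)-E_\alpha(-t^\alpha A)f(t);
\]
continuity of $Aw$ on $(0,T]$ follows by dominated convergence for the integral term and by the strong continuity of $E_\alpha(-t^\alpha A)$ (Theorems \ref{bounded} and \ref{strong}) together with the continuity of $f$.

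For the equation itself I would pass to the Laplace transform, as in Lemma \ref{laplace} and Theorem \ref{h-cs-we}; note that $\nu>\alpha(1+\gamma)$ forces $\alpha(1+\gamma)<1$, so Lemma \ref{laplace} is available. Since $\widehat{g_{\alpha-1}}(\lambda)=\lambda^{1-\alpha}$ and $\widehat{E_\alpha(-s^\alpha A)}(\lambda)=\lambda^{\alpha-1}(\lambda^\alpha I+A)^{-1}$, the convolution theorem gives $\widehat w(\lambda)=(\lambda^\alpha I+A)^{-1}\widehat f(\lambda)$, that is $(\lambda^\alpha I+A)\widehat w(\lambda)=\widehat f(\lambda)$; with $w(0)=w'(0)=0$ this is exactly the Laplace transform of \eqref{we-f}, and inverting (both members being continuous) recovers the equation pointwise on $(0,T]$. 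By the recovered equation, $^{C}\partial_{t}^{\alpha}w$ is then a combination of $f$ and $Aw$, both continuous on $(0,T]$ by the previous step, so $^{C}\partial_{t}^{\alpha}w\in C((0,T];X)$. The genuinely delicate point is the second initial condition $w'(0)=0$: the crude bound $\|w(t)\|\leqslant Ct^{-\alpha\gamma}$ would only give $\|w'(t)\|\leqslant Ct^{-\alpha\gamma-1}$, which vanishes at $0$ solely when $\alpha>1/(-\gamma)$. I would instead exploit the regularity $f(t)\in D(A)$: writing $(\lambda^\alpha I+A)^{-1}=\lambda^{-\alpha}I-\lambda^{-\alpha}(\lambda^\alpha I+A)^{-1}A$ and inserting $\|(\lambda^\alpha I+A)^{-1}\|\leqslant C|\lambda|^{\alpha\gamma}$ (Lemma \ref{lemma1}) shows that the term carrying $A\widehat f$ gains the extra factor $\lambda^{-\alpha}$; this improves the near-zero order of $w$ from $t^{-\alpha\gamma}$ to $t^{\alpha}$, so that $w'(0)=0$ holds without the restriction $\alpha>1/(-\gamma)$ used in the homogeneous case.

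Uniqueness is handled exactly as in Theorem \ref{h-cs-we}: if $w,\widetilde w$ are two classical solutions, their difference $v$ solves the homogeneous problem with zero data, its Laplace transform satisfies $(\lambda^\alpha I+A)\widehat v(\lambda)=0$ with $\lambda^\alpha\in\rho(-A)$, whence $\widehat v\equiv0$ and $v\equiv0$ by injectivity of the Laplace transform. The main obstacle of the whole argument is the domain step: it is the sharp H\"older exponent $\nu>\alpha(1+\gamma)$ that makes $AI_1$ absolutely integrable near the diagonal, and it is the domain regularity $f(t)\in D(A)$ (rather than any size condition relating $\alpha$ and $\gamma$) that secures the initial condition $w'(0)=0$.
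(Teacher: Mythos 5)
Your treatment of the central regularity step coincides with the paper's: the same splitting of $w$ into $I_1+I_2$ via $f(s)=(f(s)-f(t))+f(t)$, Lemma \ref{777} combined with the H\"older bound for $I_1$ (exactly where $\nu>\alpha(1+\gamma)$ enters), and Lemma \ref{uno} for $I_2$, yielding $Aw(t)=AI_1(t)+f(t)-E_\alpha(-t^\alpha A)f(t)$. Two of your steps are sketchier than they should be. For the continuity of $Aw$ on $(0,T]$, a bare appeal to dominated convergence does not suffice because the upper limit of integration and the value $f(t)$ both move with $t$; the paper's three-term split $Au_1(t+h)-Au_1(t)=h_1+h_2+h_3$ is what makes the DCT applicable (only to the $h_1$ piece). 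For the equation itself, the Laplace identity $(\lambda^\alpha I+A)\widehat w=\widehat f$ together with $w(0)=w'(0)=0$ does not by itself identify $\prescript{C}{}\partial_t^{\alpha}w$: you must still set $v=Aw+f\in C((0,T];X)$, verify $w=\prescript{RL}{0}I^{\alpha}v$ (equivalently, the paper's relation $v(t)=\partial_t(E_\alpha(-t^\alpha A)\ast f)(t)$), and then apply $\prescript{C}{}\partial_t^{\alpha}$, which is precisely the paper's time-domain argument.

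The genuine gap is your argument for $w'(0)=0$. First, inferring the behavior of $w$ (let alone of $w'$) as $t\to0^+$ from the decay of $\widehat w(\lambda)$ as $\lambda\to\infty$ is a Tauberian step that you do not justify. Second, and more seriously, the claimed gain rests on the term carrying $A\widehat f$, i.e.\ on controlling $\|Af(t)\|$ near $t=0$; the hypotheses only give $f\in L^1\big((0,T);D(A)\big)$ and H\"older continuity of $f$ in the $X$-norm, so $\|Af(t)\|$ may be unbounded as $t\to0$ and the asserted order $t^{\alpha}$ for $w$ does not follow. The paper needs none of this: having established $v=\prescript{C}{}\partial_t^{\alpha}w\in C((0,T];X)$ and $w'(t)=\prescript{RL}{0}I_t^{\alpha-1}v(t)$, it invokes the composition formula to get
\begin{equation*}
w'(t)=\prescript{RL}{0}I_t^{\alpha-1}\,\prescript{C}{}\partial_t^{\alpha}w(t)=\prescript{RL}{0}I_t^{\alpha-1}\,\prescript{C}{}\partial_t^{\alpha-1}w'(t)=w'(t)-w'(0),
\end{equation*}
which forces $w'(0)=0$ with no asymptotics and, as you correctly anticipated, no condition of the form $\alpha>1/(-\gamma)$. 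Replace your Laplace-asymptotics paragraph by this identity and the proof closes; the remainder (well-posedness of the convolution, $w(0)=0$, and uniqueness via Lemma \ref{laplace} and injectivity of the Laplace transform) matches the paper.
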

\begin{proof}
If $f\in L^1\big((0,T);D(A)\big)$ and $g_{\alpha-1}\in L^1(0,T)$ then $g_{\alpha-1}\ast f\in L^1\big((0,T);D(A)\big),$ see \cite[Prop. 1.3.1]{vv}. Since $E_{\alpha}(-t^{\alpha}A)$ is strongly continuous for any $x\in D(A)$ (Theorem \ref{strong}), by \cite[Prop. 1.3.4]{vv}, it follows that $w$ exits and defines a continuous function, that is $w(t)=(g_{\alpha-1}(s)\ast E_{\alpha}(-s^{\alpha}A)\ast f(s))(t)=(E_{\alpha}(-s^{\alpha}A)\ast g_{\alpha-1}(s)\ast f(s))(t)$ belongs to $C([0,T];X).$  

In some cases, we use the following equality:
\[
w(t)=\big(g_{\alpha-1}(s)\ast E_{\alpha}(-s^{\alpha}A)\ast f(s)\big)(t)=\int_0^t (t-s)^{\alpha-1}E_{\alpha,\alpha}(-(t-s)^{\alpha}A)f(s){\rm d}s.
\]
Let us prove that $w(t)\in D(A)$ for any $t\in(0,T].$ First, we write $w(t)=u_1(t)+u_2(t),$ where 
\[
u_1(t)=\int_0^t (t-s)^{\alpha-1}E_{\alpha,\alpha}(-(t-s)^{\alpha}A)[f(s)-f(t)]{\rm d}s,\quad 0<t\leqslant T,
\]
and
\[
u_2(t)=\int_0^t (t-s)^{\alpha-1}E_{\alpha,\alpha}(-(t-s)^{\alpha}A)f(t){\rm d}s, \quad 0<t\leqslant T.
\]
Note that $u_2(t)=\big(g_{\alpha-1}(s)*E_\alpha(-s^{\alpha}A)*1\big)(t)f(t)=\big(E_\alpha(-s^{\alpha}A)*g_{\alpha}(s)\big)(t)f(t).$ Since $f(t)\in D(A)$ and $1>\alpha(1+\gamma),$ by Lemma \ref{uno}, $u_2 \in D(A)$ and
\begin{equation}\label{555}
Au_2(t)=f(t)-E_{\alpha}(-t^{\alpha}A)f(t),\quad 0<t\leqslant T.    
\end{equation}

Now, from Lemma \ref{777}, the condition $\nu>\alpha(1+\gamma)$ and the H\"older continuity of $f$, we have
\begin{align*}
\|A(g_{\alpha-1}\ast E_{\alpha})(t-s)(f(s)-f(t))\|\leqslant C(t-s)^{-1-\alpha(1+\nu)}(t-s)^{\nu}\in L^1(0,t).    
\end{align*}
Hence, by \cite[Prop. 1.1.7]{vv}, $u_1\in D(A),$ and then $w\in D(A).$

We need to prove that $\partial_t^{\alpha}w\in C((0,T];X).$ For this, we have to show that $Aw\in C((0,T];X)$ and $^{C}\partial_{t}^{\alpha}w(t)=Aw(t)+f(t).$

Let $v(t)=Aw(t)+f(t).$ By Theorem \ref{derivative-s}, we obtain
\begin{align*}
    v(t)=A(g_{\alpha-1}(s)\ast E_{\alpha}(-s^{\alpha}A)\ast f)(t)+f(t)=-\big(E^{\prime}_{\alpha}(-s^{\alpha}A)\ast f\big)(t)+f(t).
\end{align*}

If $Aw\in C((0,T];X),$ we know that $v\in C((0,T];X).$ So, by Fubini's theorem, Theorem \ref{strong} with $f(t)\in D(A)$ $(t\in(0,T])$, it follows that 

\[
\int_0^t v(s){\rm d}s=(E_{\alpha}(-s^{\alpha}A)\ast f)(t),
\]
and hence 
\begin{equation}\label{v0}
v(t)=\frac{\partial}{\partial t}(E_{\alpha}(-t^{\alpha}A)\ast f)(t).
\end{equation}
Clearly $(E_{\alpha}(-s^{\alpha}A)\ast f)(t)\in C^1((0,T];X)).$ Thus, by the semigroup property of the Riemann Liouville fractional integral and \cite[Formula (1.21)]{thesis2001}, we get
\begin{align*}
^{C}\partial_t^{\alpha}w(t)&=\,^{C}\partial_t^{\alpha}\big(g_{\alpha-1}(s)\ast E_{\alpha}(-s^{\alpha}A)\ast f(s)\big)(t) \\
&=\,^{C}\partial_t^{\alpha}\,^{RL}I^{\alpha-1} \big(E_{\alpha}(-s^{\alpha}A)\ast f(s)\big)(t)\\
&=\,^{C}\partial_t^{\alpha}\,^{RL}I^{\alpha-1}\,^{RL}I^{1}v(t)=\,^{C}\partial_t^{\alpha}\,^{RL}I^{\alpha}v(t)=v(t)=-Aw(t)+f(t).
\end{align*}
It then remains to prove that $Aw\in C((0,T);X).$

By \eqref{555} and Theorem \ref{strong}, we have that $Au_2(t)$ is continuous on $(0,T].$

Now, let $h>0$ and $t\in(0,T],$ and write $Au_1(t+h)-Au_1(t)=h_1+h_2+h_3$ where
\[
h_1=\int_0^{t}A \big((g_{\alpha-1}(s)\ast E_{\alpha}(-s^{\alpha}A))(t+h-s)-(g_{\alpha-1}(s)\ast E_{\alpha}(-s^{\alpha}A))(t-s)\big)\big(f(s)-f(t)\big){\rm d}s,
\]
\[
h_2=\int_0^{t}A \big(g_{\alpha-1}(s)\ast E_{\alpha}(-s^{\alpha}A))\big)(t+h-s)\big(f(t)-f(t+h)\big){\rm d}s,
\]
and 
\[
h_3=\int_t^{t+h}A\big(g_{\alpha-1}(s)\ast E_{\alpha}(-s^{\alpha}A))\big)(t+h-s)\big(f(s)-f(t+h)\big){\rm d}s.
\]
From Lemma \ref{777} and $\nu>\alpha(1+\gamma)$, we obtain 
\begin{align*}
    \|h_2\|&\leqslant \int_0^{t}\|A \big(g_{\alpha-1}(s)\ast E_{\alpha}(-s^{\alpha}A))\big)(t+h-s)\|\|f(t)-f(t+h)\|{\rm d}s \\
&\leqslant Ch^{\nu}\int_0^{t}(t+h-s)^{-1-\alpha(1+\gamma)}{\rm d}s\leqslant Ch^{\nu-\alpha(1+\gamma)}\to 0\quad\text{as}\quad h\to0.
\end{align*}
Also, since $\nu>\alpha(1+\gamma),$ it follows that
\begin{align*}
    \|h_3\|&\leqslant C\int_t^{t+h}(t+h-s)^{-1-\alpha(1+\gamma)+\nu}{\rm d}s\leqslant Ch^{-\alpha(1+\gamma)+\nu}\to0\quad\text{as}\quad h\to0.
\end{align*}
Note that
\begin{align*}
\lim_{h\to0}A \big((g_{\alpha-1}(s)\ast E_{\alpha}(-s^{\alpha}A))&(t+h-s)\big(f(s)-f(t)\big) \\
&=A(g_{\alpha-1}(s)\ast E_{\alpha}(-s^{\alpha}A))(t-s)\big)\big(f(s)-f(t)\big).
\end{align*}
Also, by Lemma \eqref{777}, it yields that
\begin{align*}
\|A\big((g_{\alpha-1}(s)&\ast E_{\alpha}(-s^{\alpha}A))(t+h-s)\big(f(s)-f(t)\big)\| \\
&\leqslant C(t+h-s)^{-1-\alpha(1+\gamma)}(t-s)^{\nu}\leqslant C(t-s)^{-1-\alpha(1+\gamma)+\nu}\in L^1(0,t).
\end{align*}
Thus, by using the Dominated Convergence Theorem, we obtain that $h_1\to0$ as $h\to0.$
Then $Aw\in C((0,T];X).$

Let us check that $w(0)=w^{\prime}(0)=0.$ As $\big(E_{\alpha}(-s^{\alpha}A)\ast f(s)\big)(t)\in C([0,T];X)$ we arrive at 
\[
\|w(t)\|\leqslant C\|(E_{\alpha}(-s^{\alpha}A)\ast f(s))(t)\|_{C([0,T];X)}\int_0^t (t-s)^{\alpha-2}{\rm d}s\to 0\quad\text{as}\quad t\to0.
\]

So, $w(0)=0.$ It remains to check that $w^{\prime}(0)=0.$ Indeed, since $w^{\prime}(t)=\prescript{RL}{0}I_t^{\alpha-1}v(t),$ $^{C}\partial_t^{\alpha}w(t)=v(t),$ $v\in C((0,T];X)$ and \cite[Formula (1.21)]{thesis2001} then 
\[
w^{\prime}(t)=\prescript{RL}{0}I_t^{\alpha-1}\prescript{C}{}{\partial_t^{\alpha}w(t)}=\prescript{RL}{0}I_t^{\alpha-1}\prescript{C}{}{\partial_t^{\alpha-1}w^{\prime}(t)}=w^{\prime}(t)-w^{\prime}(0).
\]
Hence $w^{\prime}(0)=0.$ Finally, $w(t)=(g_{\alpha-1}(s)\ast E_{\alpha}(-s^{\alpha}A)\ast f(s))(t)$ is the unique classical solution of \eqref{we-f}. Note that the uniqueness of the solution follows by the Laplace transform, Lemma \ref{laplace}, $1>\alpha(1+\gamma)$ and the uniqueness of the inverse Laplace transform.
\end{proof}


\begin{remark}
    Theorem \ref{thm-linear-we} gives a restriction on a region, mainly, $\nu>\alpha(1+\gamma).$ This restriction also appears in the case where $\alpha\in(0,1),$ see \cite[Theorem 4.1]{section3}. Note that some initial conditions varied in this paper with respect to the results in \cite{section3}, but the restriction on the h\"older exponent remains the same. So, until now, what happens in the region $\nu\leqslant\alpha(1+\gamma)$ has not been known or claimed. Maybe, perhaps, we can gain more regularity in the mentioned results by changing the methods or tools. This remains an open question. Also, note that condition $f\in L^1\big((0,T);D(A)\big)$ is necessary to guarantee the continuity of $w$ in $C([0,T];X).$
\end{remark}

The following statement follows by Theorems \ref{h-cs-we} and \ref{thm-linear-we}.

\begin{theorem}\label{general}
Let $A\in \Theta_\omega^\gamma$ with $-1<\gamma<-1/2$ and $\omega<\theta<\mu<\pi-\alpha\frac{\pi}{2}.$ Also, suppose that $1<\alpha<2$ with $\frac{1}{1+\gamma}>\alpha>\frac{1}{-\gamma}.$ Assume that $f(t)\in D(A)$ for any $t\in(0,T]$, $f\in L^1\big((0,T);D(A)\big)$ and let $f$ be H\"older continuous with an exponent $\nu\in(0,1]$ such that $\nu>\alpha(1+\gamma).$ If $w_0,w_1\in D(A)$, then 
\[
w(t)=E_{\alpha}(-t^{\alpha}z)(A)w_0+tE_{\alpha,2}(-t^{\alpha}z)(A)w_1+(g_{\alpha-1}(s)\ast E_{\alpha}(-s^{\alpha}A)\ast f(s))(t)
\]
is the unique classical solution of \eqref{we-f}. 
\end{theorem}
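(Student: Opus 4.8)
The plan is to prove Theorem~\ref{general} by linear superposition, exploiting the fact that the problem \eqref{we-f} is linear in both the data $(w_0,w_1)$ and the forcing term $f$. The proposed solution $w(t)=E_{\alpha}(-t^{\alpha}z)(A)w_0+tE_{\alpha,2}(-t^{\alpha}z)(A)w_1+(g_{\alpha-1}(s)\ast E_{\alpha}(-s^{\alpha}A)\ast f(s))(t)$ splits naturally as $w=w_h+w_p$, where $w_h(t)=E_{\alpha}(-t^{\alpha}z)(A)w_0+tE_{\alpha,2}(-t^{\alpha}z)(A)w_1$ is the homogeneous part and $w_p(t)=(g_{\alpha-1}(s)\ast E_{\alpha}(-s^{\alpha}A)\ast f(s))(t)$ is the particular part.

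First I would invoke Theorem~\ref{h-cs-we}: under the stated hypotheses $\frac{1}{1+\gamma}>\alpha>\frac{1}{-\gamma}$ (which forces $-1<\gamma<-1/2$) and $w_0,w_1\in D(A)$, the function $w_h$ is the classical solution of the homogeneous equation \eqref{we}, so $w_h\in C([0,T];X)$, $\prescript{C}{}\partial_t^{\alpha}w_h\in C((0,T];X)$, $w_h(t)\in D(A)$ for $t\in(0,T]$, and $\prescript{C}{}\partial_t^{\alpha}w_h(t)=Aw_h(t)$ with $w_h(0)=w_0$, $w_h'(0)=w_1$. Next I would invoke Theorem~\ref{thm-linear-we}, whose hypotheses on $f$ (namely $f(t)\in D(A)$, $f\in L^1((0,T);D(A))$, and H\"older continuity with exponent $\nu>\alpha(1+\gamma)$) are exactly those assumed here; note also that the present condition $\frac{1}{1+\gamma}>\alpha$ guarantees $1>\alpha(1+\gamma)$, so all the auxiliary lemmas apply. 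Thus $w_p$ is the classical solution of \eqref{we-f} with zero initial data: $w_p\in C([0,T];X)$, $\prescript{C}{}\partial_t^{\alpha}w_p\in C((0,T];X)$, $w_p(t)\in D(A)$ on $(0,T]$, $\prescript{C}{}\partial_t^{\alpha}w_p(t)=Aw_p(t)+f(t)$, and $w_p(0)=w_p'(0)=0$.

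Then I would assemble the pieces. Since $D(A)$ is a linear subspace, $w(t)=w_h(t)+w_p(t)\in D(A)$ for each $t\in(0,T]$; the spaces $C([0,T];X)$ and $C((0,T];X)$ are linear, so $w\in C([0,T];X)$ and $\prescript{C}{}\partial_t^{\alpha}w\in C((0,T];X)$. Because both the Djrbashian--Caputo derivative and $A$ are linear operators, adding the two governing identities gives $\prescript{C}{}\partial_t^{\alpha}w(t)=A w_h(t)+Aw_p(t)+f(t)=Aw(t)+f(t)$ on $(0,T]$. Finally the initial conditions add: $w(0)=w_0+0=w_0$ and $w'(0)=w_1+0=w_1$. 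This verifies every clause of Definition~\ref{cshc} for \eqref{we-f}, so $w$ is a classical solution.

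For uniqueness I would argue by contradiction in the standard way: if $w_1,w_2$ were two classical solutions, their difference $v=w_1-w_2$ would be a classical solution of the homogeneous problem \eqref{we} with zero data, and I would apply the Laplace transform together with Lemma~\ref{laplace} (whose hypothesis $1>\alpha(1+\gamma)$ holds) and the injectivity of the inverse Laplace transform to conclude $v\equiv 0$, exactly as in the uniqueness arguments of Theorems~\ref{h-cs-we} and~\ref{thm-linear-we}. The main (and essentially the only) subtle point is bookkeeping of hypotheses: one must check that the single set of assumptions in Theorem~\ref{general} simultaneously satisfies the requirements of \emph{both} prior theorems --- in particular that $\frac{1}{1+\gamma}>\alpha$ implies the growth-integrability condition $1>\alpha(1+\gamma)$ needed by Theorem~\ref{thm-linear-we} and its lemmas, while the two-sided bound $\frac{1}{1+\gamma}>\alpha>\frac{1}{-\gamma}$ is what Theorem~\ref{h-cs-we} demands for the $w_1$-term to be continuous and differentiable up to $t=0$. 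Once this compatibility is confirmed, the superposition is routine and no new estimates are required.
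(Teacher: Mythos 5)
Your proposal is correct and follows exactly the route the paper takes: the paper's entire proof of Theorem~\ref{general} is the one-line remark that it ``follows by Theorems~\ref{h-cs-we} and~\ref{thm-linear-we},'' i.e., precisely your superposition $w=w_h+w_p$ with the hypothesis bookkeeping you carry out (in particular that $\frac{1}{1+\gamma}>\alpha$ gives $1>\alpha(1+\gamma)$ and that the two-sided bound on $\alpha$ is what Theorem~\ref{h-cs-we} needs). Your write-up is in fact more detailed than the paper's, and the uniqueness argument via the Laplace transform matches the one used in the two cited theorems.
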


\section{Semilinear abstract Volterra equations of wave type}\label{semilinear-s}
\begin{align}\label{we-f-semi}
\prescript{C}{}\partial_{t}^{\alpha}w(t)-Aw(t)&=f(t,w(t)),\quad t\in(0,T],\quad 1<\alpha<2,\nonumber \\
w(t)|_{_{_{t=0}}}&=w_0, \\
\partial_t w(t)|_{_{_{t=0}}}&=w_1,\nonumber
\end{align}
where $X$ is a complex Banach space, $A\in \Theta_\omega^\gamma$ with $\omega<\theta<\mu<\pi-\alpha\frac{\pi}{2}.$

\medskip The classical solution of \eqref{we-f-semi} will be analyzed from the regularity of the mild solution. Therefore, we first give such a definition. In this case, taking into account the analysis done in Theorems \ref{thm-linear-we} and \ref{general}, we arrive at the following definition.  

\begin{definition}\label{mild-s}
    Let $A\in \Theta_\omega^\gamma$ with $\omega<\theta<\mu<\pi-\alpha\frac{\pi}{2}.$ A function $w\in C([0,T];X)$ (respectively, $w\in C([0,T];D(A)))$ is called a mild solution of \eqref{we-f-semi} if $w$ satisfies \[
    w(t)=E_{\alpha}(-t^{\alpha}z)(A)w_0+(1\ast E_{\alpha}(-s^{\alpha}A))(t)w_1+(g_{\alpha-1}(s)\ast E_{\alpha}(-s^{\alpha}A)\ast f(s,w(s)))(t).
    \]
\end{definition}

We can arrive at the above definition by assuming that $w\in C([0,T];X)$ satisfies the equation \eqref{we-f-semi}. Indeed, assuming the existence of the vector Laplace transform for $w,$ we get in equation \eqref{we-f-semi} that
\[
\lambda^{\alpha}\widehat{w}(\lambda)-\lambda^{\alpha-1}w_0-\lambda^{\alpha-2}w_1-A\widehat{w}(\lambda)=\widehat{f(t,w(t))}(\lambda).
\]
If $\lambda^{\alpha}\in\rho(A),$ we obtain that
\[
\widehat{w}(\lambda)=\frac{\lambda^{\alpha-1}}{\lambda^{\alpha}+A}w_0+\frac{\lambda^{\alpha-2}}{{\lambda^{\alpha}+A}}w_1+\frac{1}{{\lambda^{\alpha}+A}}\widehat{f(t,w(t))}(\lambda).
\]
By Lemma \ref{laplace}, we get 
\[
\widehat{w}(\lambda)=\widehat{S_{\alpha}(t)}w_0+\widehat{1}(\lambda)\widehat{S_{\alpha}(t)}w_1+\widehat{g_{\alpha-1}(t)}(\lambda)\widehat{S_{\alpha}(t)}\widehat{f(t,w(t))}(\lambda).
\]
Taking the inverse Laplace transform and using some classical formulas, it yields the above representation given in Definition \ref{mild-s}. It is necessary to mention that in our case we usually assume the data $w_0,w_1$ in $D(A),$ and hence it follows the continuity in the closed interval, see Remark \ref{remark-main-thm}. Of course, we also need to impose some appropriate conditions over the nonlinear function $f(t,w(t))$ to guarantee the latter affirmation. It must be clear at this point that taking $w_0\in X$ and $w_1=0$ then we have $E_{\alpha}(-t^{\alpha}A)w_0\in C((0,T];X),$ and thus the mild solution belongs to $C((0,T];X);$ again see Remark \ref{remark-main-thm}. Also, from Theorem \ref{bounded}, Lemma \ref{laplace} and condition $1>\alpha(1+\gamma)$, we can guarantee the existence of the Laplace transform for $w.$   

\medskip Let us start by showing the result on the existence of a mild solution for \eqref{we-f-semi}.

\begin{theorem}\label{existence-mild}
Suppose that $A\in \Theta_\omega^\gamma$ with $\omega<\theta<\mu<\pi-\alpha\frac{\pi}{2},$ $-1<\gamma<0$ and $1>\alpha(1+\gamma).$ Assume that the nonlinear function $f(t,x):[0,T]\times X\to D(A)$ is continuous with respect to the time variable $t$ and there exists a constant $L>0$ such that
\begin{equation}\label{condition}
    \|f(t,x)-f(t,y)\|_{D(A)}\leqslant L\|x-y\|\quad\text{for any}\quad t\in[0,T]\quad\text{and}\quad x,y\in X.
\end{equation}
Then the problem \eqref{we-f-semi} has a unique mild solution in $C([0,T];D(A))$ for $w_0,w_1,Aw_0\in D(A).$
\end{theorem}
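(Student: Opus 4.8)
The plan is to recast the mild-solution identity of Definition~\ref{mild-s} as a fixed-point equation $w=\Phi w$ on the Banach space $C([0,T];D(A))$, equipped with the norm $\|w\|_{\infty}=\sup_{t\in[0,T]}\|w(t)\|_{D(A)}$, where
\[
(\Phi w)(t)=E_\alpha(-t^\alpha A)w_0+\bigl(1\ast E_\alpha(-s^\alpha A)\bigr)(t)w_1+\bigl(g_{\alpha-1}(s)\ast E_\alpha(-s^\alpha A)\ast f(s,w(s))\bigr)(t),
\]
and to exhibit its unique fixed point by a contraction argument. First I would check that $\Phi$ maps $C([0,T];D(A))$ into itself. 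For the $w_0$-term I use $w_0,Aw_0\in D(A)$ together with the commutation $AE_\alpha(-t^\alpha A)w_0=E_\alpha(-t^\alpha A)Aw_0$ (functional calculus and closedness of $A$) and the strong continuity on $D(A)$ from Theorem~\ref{strong}, so that $E_\alpha(-t^\alpha A)w_0\in C([0,T];D(A))$. For the $w_1$-term I write $A\bigl(1\ast E_\alpha(-s^\alpha A)\bigr)(t)w_1=\int_0^t E_\alpha(-s^\alpha A)Aw_1\,ds$ and use Lemma~\ref{i} with $1>\alpha(1+\gamma)$ to see that the integral converges, is continuous, and vanishes as $t\to0^+$. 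For the nonlinear term, since $f(s,w(s))\in D(A)$ is continuous in $s$ and hence bounded in the graph norm on $[0,T]$, Lemma~\ref{777} applied pointwise bounds the integrand by $C(t-s)^{-1-\alpha\gamma}\|f(s,w(s))\|_{D(A)}$, an integrable singularity because $\alpha(-\gamma)-1>-1$; closedness of $A$ then puts the convolution in $D(A)$, and a dominated-convergence argument as in the proof of Theorem~\ref{thm-linear-we} yields continuity in $t$, including the limit $0$ at $t=0$.

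Next I estimate the difference. Since the first two summands of $\Phi w$ do not involve $w$, for $w,v\in C([0,T];D(A))$ we have
\[
(\Phi w)(t)-(\Phi v)(t)=\Bigl(g_{\alpha-1}(s)\ast E_\alpha(-s^\alpha A)\ast\bigl[f(s,w(s))-f(s,v(s))\bigr]\Bigr)(t).
\]
Bounding the $X$-part with $\|E_{\alpha,\alpha}(-\tau^\alpha A)\|\le C\tau^{-\alpha(1+\gamma)}$ from Theorem~\ref{bounded} and the $A$-part with the second estimate of Lemma~\ref{777}, and invoking the Lipschitz hypothesis in the form $\|f(s,w(s))-f(s,v(s))\|_{D(A)}\le L\|w(s)-v(s)\|\le L\|w(s)-v(s)\|_{D(A)}$, both contributions reduce to the same weakly singular convolution, giving
\[
\|(\Phi w)(t)-(\Phi v)(t)\|_{D(A)}\le CL\int_0^t (t-s)^{\beta}\,\|w(s)-v(s)\|_{D(A)}\,ds,\qquad \beta:=\alpha(-\gamma)-1>-1.
\]

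The main obstacle is that this bound is a contraction only on a short interval, so a single application of the Banach principle does not suffice; I would instead iterate. Writing $(t-s)^{\beta}=\Gamma(\beta+1)\,g_{\beta+1}(t-s)$ and using the semigroup property $g_{\beta+1}^{\ast n}=g_{n(\beta+1)}$ of the Riemann--Liouville kernels, the $n$-fold iterate obeys
\[
\|(\Phi^n w)(t)-(\Phi^n v)(t)\|_{D(A)}\le \frac{\bigl(CL\,\Gamma(\beta+1)\bigr)^n\,T^{\,n(\beta+1)}}{\Gamma\bigl(n(\beta+1)+1\bigr)}\,\|w-v\|_{\infty}.
\]
Because $\beta+1=\alpha(-\gamma)>0$ and $\Gamma\bigl(n(\beta+1)+1\bigr)$ grows super-exponentially in $n$, the prefactor tends to $0$, so $\Phi^n$ is a strict contraction for $n$ large. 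By the generalized Banach contraction principle (Weissinger's fixed-point theorem), $\Phi$ has a unique fixed point in $C([0,T];D(A))$, which is exactly the unique mild solution of \eqref{we-f-semi}. As an alternative to the iteration, a Bielecki weighted norm $\sup_{t}e^{-\lambda t}\|w(t)\|_{D(A)}$ makes $\Phi$ itself a contraction for $\lambda$ large, since $\int_0^{\infty}\tau^{\beta}e^{-\lambda\tau}\,d\tau=\Gamma(\beta+1)\lambda^{-(\beta+1)}\to0$.
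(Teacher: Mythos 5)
Your proposal is correct and follows essentially the same route as the paper: a fixed-point argument for the mild-solution map on $C([0,T];D(A))$ with the graph norm, well-definedness of the map via Theorems \ref{bounded} and \ref{strong}, Lemma \ref{i} and Lemma \ref{777} (using $Aw_0\in D(A)$ for the first term and $1>\alpha(1+\gamma)$ for the integrability of the singular kernels), the same weakly singular kernel $(t-s)^{-1-\alpha\gamma}$ in the difference estimate, and iteration so that a power of the map is a strict contraction. Your iterated bound with $\Gamma\bigl(n(\beta+1)+1\bigr)$ in the denominator, obtained from the semigroup property of the Riemann--Liouville kernels, is in fact the more precise form of the paper's factor $(-\alpha\gamma)^{n}n!$; both tend to make the prefactor vanish, so the conclusion is identical.
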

\begin{proof}
Take the Banach space $C([0,T];D(A))$ endowed with the norm 
\[
\|w\|_{C([0,T];D(A))}=\sup_{t\in[0,T]}\big(\|w(t)\|+\|Aw(t)\|\big).
\]
Consider the operator define by 
\begin{align}\label{operator}
(Hw)(t)=E_{\alpha}(-t^{\alpha}A)w_0+&tE_{\alpha,2}(-t^{\alpha}A)w_1 \\
&+(g_{\alpha-1}(s)\ast E_{\alpha}(-s^{\alpha}A)\ast f(s,w(s)))(t). \nonumber   
\end{align}
Let us see that $H:C([0,T];D(A))\to C([0,T];D(A)).$ From Theorems \ref{bounded} and \ref{strong}, $t_0\in[0,T]$ and the conditions $w_0,Aw_0\in D(A),$ we get
\[
\|E_{\alpha}(-t^{\alpha}A)w_0-E_{\alpha}(-t_0^{\alpha}A)w_0\|_{D(A)}\to0,\quad\text{as}\quad t\to t_0.
\]
This gives $E_{\alpha}(-t^{\alpha}A)w_0\in C([0,T];D(A)).$ Also, by Theorem \ref{bounded}, the condition $1>\alpha(1+\gamma)$ and for $t_0>0$, we have that
\begin{align*}
\|(1\ast E_{\alpha}(-s^{\alpha}A))(t)w_1&-(1\ast E_{\alpha}(-s^{\alpha}A))(t_0)w_1\| \\
&\leqslant C\|w_1\|\int_{t_0}^t s^{-\alpha(1+\gamma)}{\rm d}s\to0,\quad\text{as}\quad t\to t_0.
\end{align*}
Now, if $t_0\to 0$, we obtain
\[
\|(1\ast E_{\alpha}(-s^{\alpha}A))(t_0)w_1\|\leqslant C\|w_1\|\int_0^{t_0} s^{-\alpha(1+\gamma)}{\rm d}s\to0.
\]
Thus, for any $t_0\in[0,T],$ it follows that 
\[
\|(1\ast E_{\alpha}(-s^{\alpha}A))(t)w_1-(1\ast E_{\alpha}(-s^{\alpha}A))(t_0)w_1\|\to0,\quad\text{as}\quad t\to t_0.
\]
Here, by Lemma \ref{i}, for $t_0>0,$ we also get
\begin{align*}
\|A\big[(1\ast E_{\alpha}(-s^{\alpha}A))(t)w_1&-(1\ast E_{\alpha}(-s^{\alpha}A))(t_0)w_1\big]\| \\
&\leqslant C\|Aw_1\|\int_{t_0}^t s^{-\alpha(1+\gamma)}{\rm d}s\to0,\quad\text{as}\quad t\to t_0.
\end{align*}
Now, if $t_0\to0,$ from the same Lemma \ref{i}, we arrive at
\[
\|A(1\ast E_{\alpha}(-s^{\alpha}A))(t_0)w_1\|\leqslant C\|Aw_1\|\int_0^{t_0} s^{-\alpha(1+\gamma)}{\rm d}s\to0.
\]
This implies that $(1\ast E_{\alpha}(-s^{\alpha}A))(t)w_1\in C([0,T];D(A)).$

\medskip Note also that $f(t,w(t))\in C([0,T];D(A))$ for any $w\in C([0,T];D(A)).$ In fact, for $w\in C([0,T];D(A))$ and $t_0\in [0,T],$ we obtain
\begin{align*}
\|f(t,w(t))&-f(t_0,w(t_0))\|_{D(A)} \\
&\leqslant \|f(t,w(t))-f(t,w(t_0))\|_{D(A)}+\|f(t,w(t_0))-f(t_0,w(t_0))\|_{D(A)}.
\end{align*}
Hence, by \eqref{condition}, $w\in C([0,T];D(A))$ and taking $t\to t_0$, we have
\[
\|f(t,w(t))-f(t,w(t_0))\|_{D(A)}\leqslant L\|w(t)-w(t_0)\|\leqslant L\|w(t)-w(t_0)\|_{D(A)}\to0.
\]
Now, by the continuity in time of $f,$ it follows that
\[
\|f(t,w(t_0))-f(t_0,w(t_0))\|_{D(A)}\to0,\quad\text{as}\quad t\to t_0.
\]
Therefore, $\|f(t,w(t))-f(t_0,w(t_0))\|_{D(A)}\to0$ when $t\to t_0,$ and it is equivalent to $f(\cdot,w(\cdot))\in C([0,T];D(A)).$ By \cite[Prop. 1.3.1]{vv} and $g_{\alpha-1}\in L^1[0,T]$ we have $(g_{\alpha-1}(s)\ast f(s,w(s)))(t)\in L^1\big([0,T];D(A)\big).$ From the fact that $E_{\alpha}(-t^{\alpha}A)$ is strongly continuous for any $x\in D(A)$ (Theorem \ref{strong}), and by \cite[Prop. 1.3.4]{vv}, it follows that $(g_{\alpha-1}(s)\ast E_{\alpha}(-s^{\alpha}A)\ast f(s,w(s)))(t)$ exits and defines a continuous function, that is, the function belongs to $C([0,T];D(A)).$ Therefore, the operator $H$ is well defined.

\medskip On the other hand, suppose that $w,v\in C([0,T];D(A)).$ By Lemma \ref{777}, we have 
\[
\|(g_{\alpha-1}(s)\ast E_{\alpha}(-s^{\alpha}A))(t)\|\leqslant Ct^{-1-\alpha\gamma}.
\]
Hence
\begin{align*}
    \|(Hw)(t)&-(Hv)(t)\| \\
    &\leqslant C\int_0^t \|g_{\alpha-1}(s)\ast E_{\alpha}(-s^{\alpha}A)(t-s)\|\|f(s,w(s))-f(s,v(s))\|{\rm d}s \\
    &\leqslant C\int_0^t (t-s)^{-1-\alpha\gamma}\|f(s,w(s))-f(s,v(s))\|{\rm d}s.
\end{align*}
Also, from Theorems \ref{thm-main} and \ref{lem-a1}, and Lemma \ref{777}, we get that 
\begin{align*}
\|A\big((Hw)(t)&-(Hv)(t)\big)\| \\
&\leqslant C\int_0^t \|g_{\alpha-1}(s)\ast E_{\alpha}(-s^{\alpha}A)(t-s)\|\|A\big(f(s,w(s))-f(s,v(s))\big)\|{\rm d}s \\
    &\leqslant C\int_0^t (t-s)^{-1-\alpha\gamma}\|A\big(f(s,w(s))-f(s,v(s))\big)\|{\rm d}s.
\end{align*}
Thus, by the above estimates and \eqref{condition}, it yields 
\begin{align*}
\|(Hw)(t)&-(Hv)(t)\|_{D(A)}\leqslant C\int_0^t (t-s)^{-1-\alpha\gamma}\|f(s,w(s))-f(s,v(s))\|_{D(A)}{\rm d}s \\
&\leqslant CL\int_0^t (t-s)^{-1-\alpha\gamma}\|w(s)-v(s)\|_{D(A)}{\rm d}s \\
&\leqslant \frac{CLt^{-\alpha\gamma}}{-\alpha\gamma}\|w-v\|_{C([0,T];D(A))}\leqslant \frac{CLT^{-\alpha\gamma}}{-\alpha\gamma}\|w-v\|_{C([0,T];D(A))}.
\end{align*}
By mathematical induction, we obtain the following inequality:
\[
\|(H^{n}w)(t)-(H^{n}v)(t)\|_{D(A)}\leqslant \frac{(CLT^{-\alpha\gamma})^n}{(-\alpha\gamma)^n n!}\|w-v\|_{C([0,T];D(A))}.
\]
Since $\displaystyle\lim_{n\to+\infty}\frac{(CLT^{-\alpha\gamma})^n}{(-\alpha\gamma)^n n!}=0,$ we see that $H^n$ is a contraction map and therefore has a unique fixed point. This completes the proof.
\end{proof}

\begin{remark}
    There are two interesting observations to highlight in Theorem \ref{existence-mild}. First, for the existence of the mild solution of problem \eqref{we-f-semi}, an additional condition, never before requested in this paper, has been imposed, that is, $Aw_0\in D(A).$ This condition is necessary to prove that $E_{\alpha}(-t^{\alpha}A)\in D(A)$ for any $t\in [0,T].$ Note that in Theorem \ref{h-cs-we}, to prove the classical solutions of equation \eqref{we}, it was not necessary to make such a condition. Basically, from the definition of classical solutions we guarantee $E_{\alpha}(-t^{\alpha}A)\in D(A)$ for any $t\in (0,T].$ Thus, it is clear that the extra condition gives the belonging over the domain of $A$ in $t=0,$ which is a delicate step with this type of propagators. Second, for the propagator $tE_{\alpha,2}(-t^{\alpha}A)$ is not necessary imposed that $Aw_1\in D(A).$ However, we use the alternative representation for it (Remark \ref{rerewrite}), and for some estimates we need to restrict to the following region $1>\alpha(1+\gamma).$ This way allows us to actively take advantage of the strongly continuity of the propagator $E_{\alpha}(-t^{\alpha}A)$, see Theorem \ref{strong}.    
\end{remark}

Next, we provide the main result of this section on the regularity of the mild solution. Here, we show that, under a Lipchitz-type condition over $f$, it becomes a classical solution. It is important to note that our nonlinear function must be properly defined from $[0,T]\times X$ to $D(A).$ The restriction on the range comes from previous results (specifically Theorem \ref{general}) and at this point we cannot escape from it.  

\begin{theorem}\label{ult}
Let $A\in \Theta_\omega^\gamma$ with $-1<\gamma<-1/2$ and $\omega<\theta<\mu<\pi-\alpha\frac{\pi}{2}.$ Also, suppose that $1<\alpha<2$ with $\frac{1}{1+\gamma}>\alpha>\frac{1}{-\gamma}.$ Assume that for any $k>0,$ there exits a constant $L(k)$ such that the function $f:[0,T]\times X\to D(A)$ satisfies 
\begin{equation}\label{condition-final}
        \|f(t,w)-f(s,v)\|_{D(A)}\leqslant L(k)\big(|t-s|^{\nu}+\|w-v\|\big),\quad\text{for some}\quad \nu>\alpha(1+\gamma),
    \end{equation}
    for any $t,s\in [0,T],$ $w,v\in X$ with $\|w\|,\|v\|\leqslant k.$ If $w_0,w_1\in D(A)$ and $w\in C([0,T];X)$ is a mild solution of \eqref{we-f-semi}, then $w$ is a classical solution of \eqref{we-f-semi}.
\end{theorem}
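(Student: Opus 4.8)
The plan is to exploit the linear superposition already encoded in Definition~\ref{mild-s}: write the given mild solution as $w=w_{h}+w_{f}$, where $w_{h}(t)=E_{\alpha}(-t^{\alpha}z)(A)w_0+(1\ast E_{\alpha}(-s^{\alpha}A))(t)w_1$ is the homogeneous part and $w_{f}(t)=(g_{\alpha-1}(s)\ast E_{\alpha}(-s^{\alpha}A)\ast f(s,w(s)))(t)$ is the forcing part. Since $w_0,w_1\in D(A)$ and $\frac{1}{1+\gamma}>\alpha>\frac{1}{-\gamma}$, Theorem~\ref{h-cs-we} already identifies $w_{h}$ as the classical solution of the homogeneous problem: $w_h(t)\in D(A)$ on $(0,T]$, $\prescript{C}{}\partial_t^{\alpha}w_h=Aw_h$ in $C((0,T];X)$, $w_h(0)=w_0$ and $w_h'(0)=w_1$. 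The whole statement therefore reduces to showing that $w_{f}$ is the classical solution of the linear equation \eqref{we-f} with forcing $g(t):=f(t,w(t))$ and $w_0=w_1=0$, which is precisely Theorem~\ref{thm-linear-we}, once its hypotheses on the forcing term are verified. Adding the two classical solutions, using linearity of $\prescript{C}{}\partial_t^{\alpha}$ and of $A$ on $D(A)$, then yields $\prescript{C}{}\partial_t^{\alpha}w=Aw+f(t,w(t))$ with the prescribed initial data, which is the assertion.

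The crux is thus to check that $g(t)=f(t,w(t))$ meets the hypotheses of Theorem~\ref{thm-linear-we}: $g(t)\in D(A)$, $g\in L^1((0,T);D(A))$, and $g$ is H\"older continuous with exponent exceeding $\alpha(1+\gamma)$. Membership in $D(A)$ and boundedness are immediate, since $f:[0,T]\times X\to D(A)$ and $s\mapsto f(s,w(s))$ is continuous (composition of $w\in C([0,T];X)$ with a map that is time-continuous and Lipschitz in its second slot by \eqref{condition-final}); in particular $g\in C([0,T];D(A))\subset L^1((0,T);D(A))$. For the H\"older estimate, \eqref{condition-final} with $k=\|w\|_{C([0,T];X)}$ gives $\|g(t)-g(s)\|_{D(A)}\leqslant L(k)\big(|t-s|^{\nu}+\|w(t)-w(s)\|\big)$, so it suffices to show that the mild solution $w$ is itself H\"older (indeed Lipschitz, as $\nu\leqslant1$) on $[0,T]$.

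I would establish the regularity of $w$ summand by summand. For $E_{\alpha}(-t^{\alpha}A)w_0$ the derivative is controlled by Theorem~\ref{derivative-s}, whose bound $\|\partial_tE_{\alpha}(-t^{\alpha}A)w_0\|\leqslant C\|Aw_0\|\,t^{-1-\alpha\gamma}$ stays bounded near $t=0$ exactly because $\alpha>\frac{1}{-\gamma}$ forces $-1-\alpha\gamma>0$; hence this term is Lipschitz. For $(1\ast E_{\alpha}(-s^{\alpha}A))(t)w_1$ the derivative is $E_{\alpha}(-t^{\alpha}A)w_1$, which for $w_1\in D(A)$ is bounded and continuous up to $t=0$ (it tends to $w_1$) by the strong continuity of Theorem~\ref{strong}, so this term is Lipschitz as well. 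For $w_{f}$ I would use the identity $(g_{\alpha-1}(s)\ast E_{\alpha}(-s^{\alpha}A))(t)=t^{\alpha-1}E_{\alpha,\alpha}(-t^{\alpha}A)=:K(t)$ from Lemma~\ref{777}, together with the norm bounds from Theorem~\ref{bounded}: $\|K(t)\|\leqslant Ct^{-1-\alpha\gamma}$ and $\|K'(t)\|=\|t^{\alpha-2}E_{\alpha,\alpha-1}(-t^{\alpha}A)\|\leqslant Ct^{-2-\alpha\gamma}$. Splitting the increment $w_f(t+h)-w_f(t)$ into the tail $\int_t^{t+h}K(t+h-s)g(s)\,\mathrm{d}s$ and the kernel-increment $\int_0^t\big(K(t+h-s)-K(t-s)\big)g(s)\,\mathrm{d}s$, the bounded forcing $g$ lets the $\|K\|$-bound control the first and the $\|K'\|$-bound control the second; the condition $-\alpha\gamma>1$ makes both exponents ($-1-\alpha\gamma>0$ and $-2-\alpha\gamma>-1$) integrable and delivers a modulus $O(h)$, i.e. Lipschitz continuity of $w_f$. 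Combining the three, $w$ is Lipschitz, so $g$ is H\"older with exponent $\nu>\alpha(1+\gamma)$, and the reduction of the first paragraph closes the proof.

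The main obstacle is precisely this bootstrap of the a~priori merely continuous mild solution to sufficient H\"older regularity. The delicate point to emphasize is that the exponent must \emph{exceed} $\alpha(1+\gamma)$, a quantity that in the admissible range $\frac{1}{1+\gamma}>\alpha>\frac{1}{-\gamma}$ can be larger than $1/2$; one therefore cannot merely invoke the weak modulus $1-\alpha(1+\gamma)$ that the $w_1$-term would produce under $w_1\in X$ alone. It is the hypothesis $w_1\in D(A)$, fed into the strong continuity of $E_{\alpha}(-t^{\alpha}A)$ at $t=0$ (Theorem~\ref{strong}), together with $\alpha>\frac{1}{-\gamma}$ in the kernel estimates for $K$, that upgrades every summand to Lipschitz and makes the chain ``$w$ Lipschitz $\Rightarrow$ $f(\cdot,w(\cdot))$ H\"older $\Rightarrow$ Theorem~\ref{thm-linear-we} applicable'' go through, thereby promoting the mild solution of \eqref{we-f-semi} to a classical one.
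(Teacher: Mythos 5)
Your reduction is the same as the paper's: both proofs aim to show that $g(t):=f(t,w(t))$ is H\"older continuous with exponent exceeding $\alpha(1+\gamma)$ and then invoke the linear theory (Theorem \ref{general}, i.e.\ Theorems \ref{h-cs-we} and \ref{thm-linear-we} combined). Where you genuinely diverge is in the regularity bootstrap for the mild solution. The paper differences the \emph{forcing}: it writes $w_f(t+h)-w_f(t)$ so that the increment $f(s+h,w(s+h))-f(s,w(s))$ appears under the integral, obtains via \eqref{condition-final} the integral inequality $\|w(t+h)-w(t)\|\leqslant C\bigl(h^{\theta}+\int_0^t(t-s)^{-1-\alpha\gamma}\|w(s+h)-w(s)\|\,{\rm d}s\bigr)$ with $\theta=\min\{\nu,1-\alpha(1+\gamma)\}$, and closes with a Gr\"onwall-type lemma. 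You instead difference the \emph{kernel} $K(t)=t^{\alpha-1}E_{\alpha,\alpha}(-t^{\alpha}A)$, using only the boundedness of $g$ together with $\|K(t)\|\leqslant Ct^{-1-\alpha\gamma}$ and $\|K'(t)\|\leqslant Ct^{-2-\alpha\gamma}$ (both exponents being tame precisely because $-\alpha\gamma>1$), which yields a Lipschitz modulus for $w_f$ with no Gr\"onwall step; and you treat the $w_1$-term through the boundedness of $\partial_t(1\ast E_{\alpha}(-s^{\alpha}A))(t)w_1=E_{\alpha}(-t^{\alpha}A)w_1$ near $t=0$ for $w_1\in D(A)$ (Theorem \ref{strong}), rather than through the crude operator-norm bound. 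This buys you something real: the paper's exponent $\theta=\min\{\nu,1-\alpha(1+\gamma)\}$ exceeds $\alpha(1+\gamma)$ only when $\alpha(1+\gamma)<1/2$, which the standing hypotheses $\frac{1}{1+\gamma}>\alpha>\frac{1}{-\gamma}$ do \emph{not} guarantee (e.g.\ $\gamma=-0.55$, $\alpha=1.9$ gives $\alpha(1+\gamma)=0.855$ and $1-\alpha(1+\gamma)=0.145$), whereas your Lipschitz bound makes $g$ H\"older of exponent $\nu>\alpha(1+\gamma)$ in the entire admissible range. The price is that you must know $K$ is continuously differentiable in operator norm with the stated derivative bound, but that is exactly what Theorem \ref{bounded} supplies. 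Your argument is correct and, on this step, sharper than the one in the paper.
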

\begin{proof}
The idea of the proof is applied the Theorem \ref{general}. Thus, we need to show that the function $f$ satisfies the H\"older condition imposed in such a statement. First, for $h>0$ and $t\in [0,T-h],$ we have
\begin{align*}
w(t+h)&-w(t)=E_{\alpha}(-(t+h)^{\alpha}A)w_0-E_{\alpha}(-t^{\alpha}A)w_0 \\
&\,+(1\ast E_{\alpha}(-s^{\alpha}A))(t+h)w_1-(1\ast E_{\alpha}(-s^{\alpha}A))(t)w_1 \\
&\,+\int_0^h (g_{\alpha-1}(r)\ast E_{\alpha}(-r^{\alpha}A))(t+h-s)f(s,w(s)){\rm d}s \\
&\,+\int_0^t (g_{\alpha-1}(r)\ast E_{\alpha}(-r^{\alpha}A))(t-s)\big[f(s+h,w(s+h))-f(s,w(s)))\big]{\rm d}s.
\end{align*}
Note that by Remark \ref{conmu} and $w_0\in D(A)$ we get
\begin{align*}
    E_{\alpha}(-(t+h)^{\alpha}A)w_0&-E_{\alpha}(-t^{\alpha}A)w_0 
 \\
&=\int_t^{t+h}\partial_s E_{\alpha}(-s^{\alpha}A)w_0{\rm d}s=-\int_t^{t+h}s^{\alpha-1}AE_{\alpha,\alpha}(-s^{\alpha}A)w_0{\rm d}s.
\end{align*}
From Theorem \ref{bounded} and the two-sided inequality $(a+b)^{\rho}\asymp a^{\rho}+b^{\rho},$ for $a,b,\rho\geqslant0$ ($C_1 [a^{\rho}+b^{\rho}]\leqslant (a+b)^{\rho}\leqslant C_2 [a^{\rho}+b^{\rho}]$ for some constants $C_{1,2}>0$), we have 
\begin{align*}
\|E_{\alpha}(&-(t+h)^{\alpha}A)w_0-E_{\alpha}(-t^{\alpha}A)w_0\| \\
&\leqslant C\|Aw_0\|\int_t^{t+h}s^{\alpha-1-\alpha(1+\gamma)}{\rm d}s\leqslant C\|Aw_0\|\big((t+h)^{-\alpha\gamma}-t^{-\alpha\gamma}\big)\leqslant C\|Aw_0\|h^{-\alpha\gamma}.
\end{align*}
Now, from \eqref{condition-final}, it follows that
\[
\|f(s+h,w(s+h))-f(s,w(s))\|\leqslant L(k)\big(h^{\nu}+\|w(s+h)-w(s)\|\big).
\]
Also, from Lemma \ref{777}, we obtain
\begin{align*}
    &\int_0^t \|(g_{\alpha-1}(r)\ast E_{\alpha}(-r^{\alpha}A))(t-s)\|\|f(s+h,w(s+h))-f(s,w(s)))\|{\rm d}s \\
&\leqslant C\int_0^t (t-s)^{-1-\alpha\gamma}L(k)\big(h^{\nu}+\|w(s+h)-w(s)\|\big){\rm d}s \\
&\leqslant C\left(h^{\nu}T^{-\alpha\gamma}+\int_0^t (t-s)^{-1-\alpha\gamma}\|w(s+h)-w(s)\|{\rm d}s\right).
\end{align*}
Again, by Lemma \ref{777}, we also have
\begin{align*}
\int_0^h &\|(g_{\alpha-1}(r)\ast E_{\alpha}(-r^{\alpha}A))(t+h-s)\|\|f(s,w(s))\|{\rm d}s \\
&\leqslant C\sup_{s\in[0,T]}\|f(s,w(s))\|\int_0^h (t+h-s)^{-1-\alpha\gamma}{\rm d}s \\
&\leqslant C\sup_{s\in[0,T]}\|f(s,w(s))\|\big((t+h)^{-\alpha\gamma}-t^{\alpha\gamma}\big)\leqslant Ch^{-\alpha\gamma}\sup_{s\in[0,T]}\|f(s,w(s))\|.
\end{align*}
Besides, from Theorem \ref{bounded}, we get
\begin{align*}
\|(1\ast E_{\alpha}(-r^{\alpha}A))&(t+h)w_1-(1\ast E_{\alpha}(-r^{\alpha}A))(t)w_1\|\leqslant C\|w_1\|\int_{t}^{t+h}s^{-\alpha(1+\gamma)}{\rm d}s \\
&\leqslant C\|w_1\|\big((t+h)^{1-\alpha(1+\gamma)}-t^{1-\alpha(1+\gamma)}\big)\leqslant C\|w_1\|h^{1-\alpha(1+\gamma)}.
\end{align*}
Finally
\begin{align*}
\|w(t+h)-w(t)\|&\leqslant C \big(\|Aw_0\|h^{-\alpha\gamma}+\|w_1\|h^{1-\alpha(1+\gamma)}+h^{-\alpha\gamma}\sup_{s\in[0,T]}\|f(s,w(s))\|\big) \\
&+C\left(h^{\nu}T^{-\alpha\gamma}+\int_0^t (t-s)^{-1-\alpha\gamma}\|w(s+h)-w(s)\|{\rm d}s\right) \\
&\leqslant C\left(h^{\theta}+\int_0^t (t-s)^{-1-\alpha\gamma}\|w(s+h)-w(s)\|{\rm d}s\right),
\end{align*}
where $\theta=\min\{\nu,1-\alpha(1+\gamma)\}>\alpha(1+\gamma).$ Here, note that $-\alpha\gamma>1-\alpha(1+\gamma).$ By a Grönwall type inequality \cite[Lemma 7.1.1]{[5]}, we obtain that 
\[
\|w(t+h)-w(t)\|\leqslant Ch^{\theta}.
\]
Hence, by \eqref{condition-final}, the function $f\big(\cdot,w(\cdot)\big):[0,T]\to D(A)$ satisfies 
\[
\|f(t+h,w)-f(t,w)\|_{D(A)}\leqslant L(k)\big(h^{\nu}+\|w(t+h)-w(t)\|\big)\leqslant CL(k)h^{\theta}.
\]
So, $f\big(\cdot,w(\cdot)\big)\in L^1\big((0,T);D(A)\big),$ and the result follows by Theorem \ref{general}.
\end{proof}

\begin{remark}
In Theorem \ref{ult}, we assume that $w\in C([0,T];X)$ is a mild solution of \eqref{we-f-semi}; so, in principle, we need to at least guarantee the existence of one. In fact, by Theorem \ref{existence-mild}, we just have to request that condition \eqref{condition} holds instead of \eqref{condition-final}, $f$ continuous with respect to the time variable $t$ and also suppose $Aw_0\in D(A).$ Of course, the latter result will be a direct consequence of Theorem \ref{ult}. Thus, we provide some conditions to show the existence of some mild solutions without claiming optimality or minimality.    
\end{remark}

\section*{\small
 Conflict of interest} 

 {\small
 The author declares that there are no conflicts of interest.}

\end{document}

\section{Analizar si algo sirve}

\begin{remark} Let $0<\alpha<2$ and $\beta\in\mathbb{C}.$ By using the following integral formula (see e.g. \cite[Formula (2.18)]{mellin-b} or \cite[Page 3.13]{mittag}) 
\begin{equation}\label{mellin-ml}
g(s,w)=\int_0^{+\infty}E_{\alpha,\beta}(-rw)r^{s-1}{\rm d}r=\frac{\Gamma(s)\Gamma(1-s)}{\Gamma(\beta-\alpha s)}w^{-s},\quad 0<\Re(s)<1,\quad w>0,
    \end{equation}
and by analytic continuation in both arguments, the function $g(\cdot,\cdot)$ is analytic in $\big(\mathbb{C}\setminus\{\mathbb{N}\cup\{0\}\},\mathbb{C}\setminus\{0\}\big).$    
\end{remark}

\subsection{otra} 

First, we take $0<\alpha<\frac{\pi}{2\omega}$ and $t\in S_{\frac{\pi}{2}-\alpha\omega}^0$, with $\omega<\mu<\text{min}\left\{\pi,\frac{\pi-2|\text{arg}\,t|}{2\alpha}\right\}.$ Suppose that the function $g_{\alpha,t}(z)=e^{-tz^{\alpha}}$ is defined on $z\in S_\mu^0.$ Assume also that $A\in \Theta_\omega^\gamma.$ So, by \cite[Lemma 2.13]{JEE2002}, we have that 
\[
\mathscr{T}_\alpha(t)=\frac{1}{2\pi}\int_{\Gamma_\theta}e^{-tz^{\alpha}}(z-A)^{-1}{\rm d}z,\quad \omega<\theta<\mu,
\]
is a bounded linear operator in $X.$ In particular, the above holds for $0<\alpha\leqslant 1.$ Note that the above expression for $\mathscr{T}_\alpha(t)$ could be also denoted by $e^{-tA^{\alpha}}.$ The latter notation is frequently used for strongly continuous semigroups generated by $A^{\alpha}$.

\medskip Now we remember some properties of the operators families $\{\mathscr{T}_{\alpha}(t):t\in S_{\frac{\pi}{2}-\alpha\omega}\}$ stated in \cite[Theorem 3.9]{JEE2002} that will be used implicitly in some of our proofs. 

\begin{theorem}\label{thm3.9}
Suppose that $A\in\Theta_\omega^\gamma$ for some $0<\omega<\frac{\pi}{2}$ and $0<\alpha<\frac{\pi}{2\omega}.$ Then the family $\{\mathscr{T}_\alpha(t):t\in S_{\frac{\pi}{2}-\alpha\omega}^0\}$ is an analytic semigroup of growth order $\frac{1+\gamma}{\alpha}.$ So, the following assertions are true.
\begin{enumerate}
    \item $\mathscr{T}_\alpha(t+s)=\mathscr{T}_\alpha(t)\mathscr{T}_\alpha(s)$ for any $t,s\in S_{\frac{\pi}{2}-\alpha\omega}.$
    \item There exists a positive constant $C(\gamma,\alpha)$ such that 
    \[
    \|\mathscr{T}_\alpha(t)\|\leqslant Ct^{-\frac{\gamma+1}{\alpha}},\quad\text{for any}\quad t>0.
    \]
    \item\label{need} The range $R(\mathscr{T}_\alpha(t))$ of $\mathscr{T}_\alpha(t)$ with $t\in S_{\frac{\pi}{2}-\alpha\omega}^0$, is contained in $D(A^{\infty})$. In particular, $R(\mathscr{T}_\alpha(t))\subset D(A^{\beta})$ for all $\beta\in\mathbb{C}$ with $\text{Re}\,\beta>0,$ and
    \[
A^{\beta}\mathscr{T}_\alpha(t)x=\frac{1}{2\pi i}\int_{\Gamma_\theta}z^{\beta}e^{-tz^{\alpha}}(z-A)^{-1}x\,{\rm d}z,\quad\text{for all}\quad x\in X.
    \]
    Also, there is a positive constant $C(\gamma,\alpha,\beta)$ such that 
    \[
    \|A^{\beta}\mathscr{T}_\alpha(t)\|\leqslant t^{-\frac{\gamma+1+{\rm Re}\,\beta}{\alpha}},\quad\text{for all}\quad t>0.
    \]
    \item\label{deri} The function $t\to \mathscr{T}_{\alpha}(t)$ is analytic in $S_{\frac{\pi}{2}-\alpha\omega}^0$ and
    \[
    \frac{{\rm d}^k}{{\rm d}t^k}\mathscr{T}_{\alpha}(t)=(-1)^k A^{k\alpha}\mathscr{T}_\alpha(t),\quad\text{for all}\quad t\in S_{\frac{\pi}{2}-\alpha\omega}^0.
    \]
    \item\label{continuity} Let $\Omega_\alpha=\left\{x\in X:\displaystyle\lim_{t\to0,\,t>0}\mathscr{T}_\alpha(t)x=x\right\}$ be the continuity set of $\mathscr{T}_\alpha(\cdot)$. If $\beta>1+\gamma$ then $D(A^{\beta})\subset\Omega_\alpha.$
\end{enumerate}
\end{theorem}

\subsection{otras estimatciones}

\begin{lemma}\label{lem-a}
 \textcolor{red}{Let $x\in X$ and $n\in\mathbb{N}\cup\{0\}.$ If $t>0$ then $E_{\alpha,\alpha-n}(-t^{\alpha}z)(A)x\in D(A),$ 
\[
\|AE_{\alpha,\alpha-n}(-t^{\alpha}z)(A)\|\leqslant Ct^{-\alpha},\quad{\text and}\quad \textcolor{red}{\|AE_{\alpha,\alpha-n}(-t^{\alpha}z)(A)\|\leqslant Ct^{-\alpha-\alpha\sigma}}.
\]
}
\end{lemma}
\begin{proof}
     By using the identity $A(z-A)^{-1}=z(z-A)^{-1}-I,$, note that 
\begin{align*}
AE_{\alpha,\alpha-n}(-t^{\alpha}z)(A)&=\frac{1}{2\pi i}\int_{\Gamma_\theta}E_{\alpha,\alpha-n}(-t^{\alpha}z)A(z-A)^{-1}{\rm d}z \\
&=-\frac{1}{2\pi i}\int_{\Gamma_\theta}E_{\alpha,\alpha-n}(-t^{\alpha}z){\rm d}z+\frac{1}{2\pi i}\int_{\Gamma_\theta}E_{\alpha,\alpha-n}(-t^{\alpha}z)z(z-A)^{-1}{\rm d}z \\
&:=I_1+I_2.
\end{align*}
Besides, by estimates \eqref{uniform-estimate} and \eqref{uniform-estimate-2}, for a large enough $N_0>0,$ we have
\begin{align*}
\|I_1\|&\leqslant C\left(\int_0^{N_0}+\int_{N_0}^{+\infty}\right)|E_{\alpha,\alpha-n}(-t^{\alpha}re^{i\theta})|{\rm d}r \\
&\leqslant C\left(\int_0^{N_0}\frac{{\rm d}r}{1+t^{\alpha}r}+\int_{N_0}^{+\infty}\frac{{\rm d}r}{1+t^{2\alpha}r^2}\right) \\
&\leqslant Ct^{-\alpha}\left(\int_0^{T^{\alpha}N_0}\frac{{\rm d}s}{1+s}+\int_{0}^{+\infty}\frac{{\rm d}s}{1+s^{2}}\right)\leqslant Ct^{-\alpha}.    
\end{align*}
Also, we have that 
\begin{align*}
\|I_2\|&\leqslant C\left(\int_0^{N_0}+\int_{N_0}^{+\infty}\right)|E_{\alpha,\alpha-n}(-t^{\alpha}re^{i\theta})|r^{1+\gamma}{\rm d}r:=J_1+J_2.    
\end{align*}
And, it is clear that  
\begin{align*}    
J_1&\leqslant C\int_0^{N_0}\frac{1}{1+t^{\alpha}r}{\rm d}r\leqslant Ct^{-\alpha},\quad 0<t\leqslant T.
\end{align*}
Now, using the two-sided inequality $(a+b)^\lambda \asymp a^\lambda+b^\lambda$ $(a,b,\lambda\geqslant0),$ then
\begin{align*}
J_2&\leqslant C\int_{N_0}^{+\infty}\frac{r^{1+\gamma}}{1+(t^{\alpha}r)^2}{\rm d}r\leqslant Ct^{-2\alpha}, 
\end{align*}
which implies that $\|I_2\|\leqslant Ct^{-2\alpha}.$ Therefore, we obtain $\|AE_{\alpha,\alpha-n}(-t^{\alpha}z)(A)\|\leqslant Ct^{-2\alpha}$ and $E_{\alpha,\alpha-n}(-t^{\alpha}z)(A)\in D(A).$ 
\end{proof}

\begin{theorem}
Assume $f(0)=0.$ Let $f$ be H\"older continuous with an exponent .., that is, 
\[
\|f(t)-f(s)\|\leqslant K|t-s|^{\theta},\quad\text{for all}\quad 0\leqslant t,s\leqslant T.
\]
\end{theorem}
\begin{proof}
Let 
\[
u(t)=\int_0^t (t-s)^{\alpha-1}E_{\alpha,\alpha}(-(t-s)^{\alpha}A)f(s){\rm d}s.
    \]
Let us now prove that $u\in D(A).$ Note that by Lemma \ref{lem-a1}, Remark \ref{da} and the hypothesis, we have 
\begin{align*}
\int_0^t (t-s)^{\alpha-1}\|AE_{\alpha,\alpha}(-(t-s)^{\alpha}A)\|\|f(s)\|{\rm d}s\leqslant KC \int_0^t (t-s)^{\alpha-1}(t-s)^{-2\alpha-\alpha \gamma}s^{\theta}{\rm d}s  
\end{align*}

???????
First, we write $u(t)=u_1(t)+u_2,$ where 
\[
u_1(t)=\int_0^t (t-s)^{\alpha-1}E_{\alpha,\alpha}(-(t-s)^{\alpha}A)[f(s)-f(t)]{\rm d}s,
\]
and
\[
u_2(t)=\int_0^t (t-s)^{\alpha-1}E_{\alpha,\alpha}(-(t-s)^{\alpha}A)f(t){\rm d}s.
\]
Note that $u_2(t)=\big(g_{\alpha-1}*S_\alpha*1\big)(t)f(t)=\big(S_\alpha*g_{\alpha}\big)(t)f(t).$ Since $A$ is closed and Lemma \ref{lem-a}, we obtain $Au_2(t)=\big(g_{\alpha}*AS_\alpha\big)(t)f(t)$ and $u_2\in D(A).$ Since 
\[
\|Au_1(t)\|\leqslant \int_0^t (t-s)^{\alpha-1}(t-s)^{-\alpha}(t-s)^{\sigma}{\rm d}s<+\infty,\quad 0<\sigma\leqslant 1.
\]
Hence, $u_1\in D(A),$ and then $u\in D(A).$

????????

By Theorem \ref{bounded}, we have that 
\[
\|u(t)\|\leqslant C\int_0^t (t-s)^{\alpha-1}\|E_{\alpha,\alpha}(-(t-s)^{\alpha}A)\|\|f(s)\|{\rm d}s\leqslant C\int_0^t (t-s)^{-\gamma\alpha-1}\|f(s)\|{\rm d}s.
\]
Since $(t-s)^{-\gamma\alpha-1}\in L^1([0,T])$ for any $s\in[0,T],$ and $f\in L^1([0,T];X),$ then by \cite[Prop. 1.3.1]{vv}, the functions $u(t)$ exists for almost all $t\in[0,T]$ and $u\in L^1([0,T];X).$

\medskip ????????
Let 
\[
u(t)=\int_0^t (t-s)^{\alpha-1}E_{\alpha,\alpha}(-(t-s)^{\alpha}A)f(s){\rm d}s=\big(g_{\alpha-1}*S_\alpha*f\big)(t),\quad g_{\alpha-1}(t)=\frac{t^{\alpha-2}}{\Gamma(\alpha-1)}.
    \]
Since $\big(g_{\alpha-1}*S_\alpha*f\big)(t)=\big(S_\alpha*(g_{\alpha-1}*f)\big)(t),$ $f\in L^1((0,T);X),$ $g_{\alpha-1}\in L^1((0,T);\mathbb{R})$, by \cite[Proposition 1.3.1]{vv}, we have that $g_{\alpha-1}*f\in L^1((0,T);X).$

As it is stated in \cite[Theorem 1.3.4]{vv}, the result is also true for strongly continuous operators on $(0,+\infty)$ and bounded on $(0,1).$ Therefore, we know that the function $u$ exists (as a Bochner integral) and $u\in C([0,T];X).$

\end{proof}

\textcolor{red}{Pensar en el caso del Schrondinger time-fractional equation !}

Note now that a different result from Theorem \ref{thm-linear-we} can be established if we imposed some other restrictions over the function $f$ and the operator $A.$ In this case, it is more restrictive since it involves a condition on the operator $A.$ The assertion reads as follows.

\begin{theorem}
    If $f(t)\in D(A),$ $f\in L^1((0,T);X)$ and $Af\in L^1((0,T);X).$
\end{theorem}
\begin{proof}
Let us now check that $u\in D(A).$ Thus, we have that 
\[
Au(t)=\int_0^t (t-s)^{\alpha-1}E_{\alpha,\alpha}(-(t-s)^{\alpha}A)Af(s){\rm d}s.
\]
Same arguments as before (Theorem 4.1, last part), it gives us that $\|Au(t)\|$ exists and hence $u\in D(A).$

We now see that $^{C}\partial_{t}^{\alpha}u(t)\in C((0,T];X).$ By Theorem \ref{bounded}, $f\in L^1((0,T);X)$ and \cite[Theorem 1.3.4]{vv}, we have that $(S_\alpha^{\prime}*f) \in C([0,T];X).$ Besides, since $S^\prime_\alpha\in L^1((0,T)),$ then
\begin{align*}
\big(S^\prime_\alpha*f\big)(t)&=\big(\,(^C\partial_t^{\alpha-1}I^{\alpha-1} S^\prime_\alpha)*f\big)(t)=\,^C\partial_t^{\alpha-1}\big((I^{\alpha-1} S^\prime_\alpha)*f\big)(t) \\
&=\,^C\partial_t^{\alpha-1}\big((g_{\alpha-1}*S^\prime_\alpha)*f\big)(t)=\,^C\partial_t^{\alpha-1}\big((S^\prime_\alpha*g_{\alpha-1})*f\big)(t) \\
&=\,^C\partial_t^{\alpha-1}\partial_t\big((S_\alpha*g_{\alpha-1})*f\big)(t)=(I^{1-(\alpha-1)}\partial_t)\partial_t \big((g_{\alpha-1}*S_\alpha)*f\big)(t) \\
&=I^{2-\alpha}\partial_t^{(2)}\big((g_{\alpha-1}*S_\alpha)*f\big)(t)=\,^C\partial_t^{\alpha}\big((g_{\alpha-1}*S_\alpha)*f\big)(t)=\,^C\partial_t^{\alpha}u(t).
 \end{align*}
Hence $u$ is a classical solution.    
\end{proof}

\begin{align*}
    E_\alpha(-t^{\alpha}A)=\int e^{\lambda t}\lambda^{\alpha-1}(\lambda^{\alpha}I-A)^{-1}d\lambda,\quad t>0.
\end{align*}

\[
\int_\rho^{\infty}e^{tr\cos\theta_0}r^{\alpha+\alpha\sigma-1}dr\leqslant \int_{-\rho t\cos\theta_0}^{\infty}e^{-u}\left(\frac{u}{t\cos\theta_0}\right)^{\alpha+\alpha\sigma-1}\frac{du}{t\cos\theta_0}\leqslant \rho^{\alpha+\alpha\sigma-1}(1/t)=1
\]
i.e. $\rho=t^{1/(\alpha+\alpha\sigma-1)}.$

\[
\int_{-\theta_0}^{\theta_0}e^{t\rho\cos\phi}\rho^{\alpha+\alpha\sigma-1}\rho d\phi=\int_{-\theta_0}^{\theta_0}e^{t\rho\cos\phi}t\rho d\phi=
\]

///////////////////

\textcolor{red}{\begin{theorem}
Let $(k, l)\in \mathcal{PC}$ and $l\in BV_{loc}(\mathbb{R}_+).$ Let $\mathrm{G}$ be a compact Lie group. Let $\mathcal{L}$ be a positive linear left invariant operator on $\mathrm{G}$ (maybe unbounded). For any $s\geqslant v$, if $u_0\in \dot{L}^2_{s-\nu}(\mathrm{G})$ then the solution of the homogeneous integro-differential difussion equation \eqref{Heat-intro} $(f\equiv0)$ belongs to $\dot{L}^2_{s}(\mathrm{G})$ and satisfies
\[
\|u(t)\|_{\dot{L}^2_{s}(\mathrm{G})}\lesssim \bigg((1\ast l)(t)\bigg)^{-1}\|u_0\|_{\dot{L}^2_{s-\nu}(\mathrm{G})}.    
\]
\end{theorem}}

\textcolor{red}{\begin{theorem}
Let $(k, l)\in \mathcal{PC}$ and $l\in BV_{loc}(\mathbb{R}_+).$ Let $\mathrm{G}$ be a compact Lie group. Let $\mathcal{L}$ be a positive linear left invariant operator on $\mathrm{G}$ (maybe unbounded). For any $s\geqslant v$, if $u_0\in \dot{L}^2_{s-\nu}(\mathrm{G})$ and $\bigg(\frac{l}{(1\ast l)(\cdot)}\ast f\bigg)(t)\in \dot{L}^2_{s-\nu}(\mathrm{G})$ then the solution of the integro-differential difussion equation \eqref{Heat-intro} belongs to $\dot{L}^2_s(\mathrm{G})$ and satisfies
\[
\|u(t)\|_{\dot{L}^2_s(\mathrm{G})}\lesssim \bigg((1\ast l)(t)\bigg)^{-1}\|u_0\|_{\dot{L}^2_{s-\nu}(\mathrm{G})}+\int_0^t\left\|\bigg(\frac{l}{(1\ast l)(\cdot)}\ast f\bigg)(s)\right\|_{\dot{L}^2_{s-\nu}(\mathrm{G})}{\rm d}s.
\]
\end{theorem}}

INTRO

mild solution 

The following statement is restricted to the case of a mild solution in $C([0,T_0];D(A)),$ for some $T_0>0.$ This situation is more delicate than the one given in Theorem \ref{existence-mild}, but the reasoning of the proof is similar. For the sake of clarity, we give all the details of the proof.

\begin{theorem}
Suppose that $A\in \Theta_\omega^\gamma$ with $\omega<\theta<\mu<\pi-\alpha\frac{\pi}{2}$ and $1>\alpha(1+\gamma).$ Assume that the nonlinear function $f(t,x):[0,T]\times D(A)\to D(A)$ is continuous with respect to the time variable $t$ and there exists a constant $L>0$ such that
\begin{equation}\label{condition-DA}
    \|f(t,x)-f(t,y)\|_{D(A)}\leqslant L\|x-y\|_{D(A)}\quad\text{for any}\quad t\in[0,T]\,\,\text{and}\quad x,y\in D(A).
\end{equation}
Then the problem \eqref{we-f-semi} has a unique mild solution $w\in C([0,T_0];D(A))$ for $w_0,w_1\in D(A),$ for some $T_0>0.$
\end{theorem}
\begin{proof}
Take the Banach space $C([0,T];D(A))$ with the norm 
\[
\|w\|_{C([0,T];D(A))}=\max_{t\in[0,T]}\big(\|w(t)\|+\|Aw(t)\|\big).
\]
Consider the operator define by 
\begin{align}\label{operator}
(Hw)(t)=E_{\alpha}(-t^{\alpha}A)w_0+&tE_{\alpha,2}(-t^{\alpha}A)w_1 \\
&+(g_{\alpha-1}(s)\ast E_{\alpha}(-s^{\alpha}A)\ast f(s,w(s)))(t). \nonumber   
\end{align}

Let us check that $H:C([0,T];D(A))\to C([0,T];D(A)).$ First, note that by Lemma \ref{i}, we get $E_{\alpha}(-t^{\alpha}A)w_0\in D(A).$ Moreover, from \cite[Prop. 1.1.7]{vv} and the condition $1>\alpha(1+\gamma),$ we also have that
\begin{align*}
\|AtE_{\alpha,2}(-t^{\alpha}A)w_1\|&=\|A(1\ast E_{\alpha}(-s^{\alpha}A))(t)w_1\| \\
&\leqslant C\|Aw_1\|\int_0^t s^{-\alpha(1+\gamma)}{\rm d}s=C\|Aw_1\|t^{1-\alpha(1+\gamma)}<+\infty.
\end{align*}
Thus, $tE_{\alpha,2}(-t^{\alpha}A)w_1\in D(A).$ And, by Theorem \ref{h-cs-we}, we get 
\[
E_{\alpha}(-t^{\alpha}A)w_0+tE_{\alpha,2}(-t^{\alpha}A)w_1 \in C([0,T];D(A)).
\]
Now, for $w\in C([0,T];D(A)),$ it follows 
\begin{align*}
\int_0^t \|f(s,w(s))\|_{D(A)}{\rm d}s&\leqslant \int_0^t \big(\|f(s,w(s))-f(s,w_0)\|_{D(A)}+\|f(s,w_0)\|_{D(A)}\big){\rm d}s \\
&\leqslant \int_0^t \big(\|w(s)-w_0\|_{D(A)}+\|f(s,w_0)\|_{D(A)}\big){\rm d}s
\end{align*}
For some fixed $\epsilon>0,$ there exists $T_1>0,$ such that $\|w(s)-w_0\|_{D(A)}<\epsilon$ whenever $s<T_1.$ Therefore, for any $t\leqslant T_0=\min\{T,T_1\},$ we obtain
\begin{align*}
\int_0^t \|f(s,w(s))\|_{D(A)}{\rm d}s&\leqslant T_0\left(\epsilon+\max_{s\in[0,T_0]}\|f(s,w_0)\|_{D(A)}\right)<+\infty,
\end{align*}
this means that $f(t,w(t))\in L^1([0,T_0];D(A)).$ Since  $g_{\alpha-1}\in L^1[0,T_0]$ then $g_{\alpha-1}\ast f\in L^1\big([0,T_0];D(A)\big),$ see \cite[Prop. 1.3.1]{vv}. From the fact that $E_{\alpha}(-t^{\alpha}A)$ is strongly continuous for any $x\in D(A)$ Theorem \ref{strong}, by \cite[Prop. 1.3.4]{vv}, it follows that $(g_{\alpha-1}(s)\ast E_{\alpha}(-s^{\alpha}A)\ast f(s,w(s)))(t)$ exits and defines a continuous function, that is the function belongs to $C([0,T];D(A)).$

\medskip On the other hand, suppose that $w,v\in C([0,T];D(A)).$ By Lemma \ref{777}, we first have 
\[
\|(g_{\alpha-1}(s)\ast E_{\alpha}(-s^{\alpha}A))(t)\|\leqslant Ct^{-1-\alpha\gamma}.
\]
Hence
\begin{align*}
    \|(Hw)(t)&-(Hv)(t)\|\leqslant C\int_0^t \|g_{\alpha-1}(s)\ast E_{\alpha}(-s^{\alpha}A)(t-s)\|\|f(s,w(s))-f(s,v(s))\|{\rm d}s \\
    &\leqslant C\int_0^t (t-s)^{-1-\alpha\gamma}\|f(s,w(s))-f(s,v(s))\|{\rm d}s.
\end{align*}
Also, from Theorems \ref{thm-main} and \ref{lem-a1}, and Lemma \ref{777}, we get that 
\begin{align*}
\|A\big((Hw)(t)&-(Hv)(t)\big)\| \\
&\leqslant C\int_0^t \|g_{\alpha-1}(s)\ast E_{\alpha}(-s^{\alpha}A)(t-s)\|\|A\big(f(s,w(s))-f(s,v(s))\big)\|{\rm d}s \\
    &\leqslant C\int_0^t (t-s)^{-1-\alpha\gamma}\|A\big(f(s,w(s))-f(s,v(s))\big)\|{\rm d}s.
\end{align*}
Thus, by \eqref{condition-DA}, it yields 
\begin{align*}
\|(Hw)(t)&-(Hv)(t)\|_{D(A)}\leqslant C\int_0^t (t-s)^{-1-\alpha\gamma}\|f(s,w(s))-f(s,v(s))\|_{D(A)}{\rm d}s \\
&\leqslant CL\int_0^t (t-s)^{-1-\alpha\gamma}\|w(s)-v(s)\|_{D(A)}{\rm d}s \\
&\leqslant \frac{CLt^{-\alpha\gamma}}{-\alpha\gamma}\|w-v\|_{C([0,T];D(A))}\leqslant \frac{CLT^{-\alpha\gamma}}{-\alpha\gamma}\|w-v\|_{C([0,T];D(A))}.
\end{align*}
By mathematical induction, we obtain the following inequality:
\[
\|(H^{n}w)(t)-(H^{n}v)(t)\|\leqslant \frac{(CLt^{-\alpha\gamma})^n}{(-\alpha\gamma)^n n!}\|w-v\|_{C([0,T];D(A))}.
\]
Since $\displaystyle\lim_{n\to+\infty}\frac{(CLt^{-\alpha\gamma})^n}{(-\alpha\gamma)^n n!}=0,$ we see that $T^n$ is a contraction map and therefore has a unique fixed point. This completes the proof.
\end{proof}